\documentclass[11pt,a4paper]{article}
\usepackage[utf8]{inputenc}

\input{macros}

\begin{document}

\title{Lyapunov stabilization for nonlocal traffic flow models}
\author{Jan Friedrich\footnotemark[1], \; Simone Göttlich\footnotemark[2], \; Michael Herty\footnotemark[1]}
\footnotetext[1]{RWTH Aachen University, Institute of Applied Mathematics, 52064 Aachen, Germany (\{friedrich,herty\}@igpm.rwth-aachen.de).}
\footnotetext[2]{University of Mannheim, Department of Mathematics, 68131 Mannheim, Germany (goettlich@uni-mannheim.de).}
\date{\today}

\newcommand{\mh}[1]{ {\color{red}#1}}
\newcommand{\rv}[1]{{#1}}

\maketitle

\begin{abstract}
Using a nonlocal second-order traffic flow model we present an approach to control the dynamics towards a steady state. \rv{The system is controlled by the leading vehicle driving at a prescribed velocity and also determines the steady state}. Thereby, we consider both, the microscopic and macroscopic scales. We show that the fixed point of the microscopic traffic flow model is asymptotically stable \rv{for any kernel function}. Then, we present Lyapunov functions for both, the microscopic and macroscopic scale, and compute the explicit rates at which the \rv{vehicles influenced by the nonlocal term tend} towards the stationary solution. \rv{We obtain the stabilization effect for a constant kernel function and arbitrary initial data or concave kernels and monotone initial data.} Numerical examples demonstrate the theoretical results.
\end{abstract}

% REQUIRED
%\begin{keywords}
 \medskip
 \noindent\textit{Keywords:} 
Lyapunov stabilization, nonlocal models, microscopic traffic flow
%\end{keywords}

% REQUIRED
%\begin{MSCcodes}
%
  \medskip
  \noindent\textit{AMS Subject Classification:} 
35L45, 35L65, 93D05
%\end{MSCcodes}

\section{Introduction}
Progress in autonomous driving brings new challenges for the modelling of traffic flow.
To deal with those, approaches such as the classical Lighthill-Whitham-Richards (LWR) model \cite{lighthill1955kinematic, richards1956shockwaves} have been extended to include more information on the surrounding traffic, see for example~\cite{BlandinGoatin2016,friedrich2018godunov,GoatinScialanga2016,KeimerPflug2017,ridder2019traveling}.
These are {nonlocal} traffic flow models.
While {local} models are governed by conservation laws, where the fundamental diagram gives the
relation between flux and density, the flux function of {nonlocal} models depends on an integral evaluation of the density or velocity.
In case of autonomous vehicles, the integration area allows for an interpretation as a connection radius.

Nonlocal traffic flow models have been introduced in~\cite{amorim2015numerical,BlandinGoatin2016}.
Most commonly macroscopic first order models are studied regarding, e.g. existence and well-posedness~\cite{chiarello2018global,friedrich2018godunov,KeimerPflug2017,keimer2018multi,keimer2018bounded}, numerical schemes~\cite{BlandinGoatin2016, ChalonsGoatinVillada2018,friedrich2019maximum,friedrich2018godunov, GoatinScialanga2016}, its singular limit behavior \cite{coclite2020general,colombo2019singular,keimer2019nonlocal} or modeling extensions such as multi-class models~\cite{chiarello2019multiclass}, time delay models~\cite{keimer2019nonlocal}, multilane models \cite{bayen2022multilane,friedrich2020nonlocal} or network formulations~\cite{friedrich2020onetoone,shen2019stationary}.
Recently also microscopic modeling approaches~\cite{friedrich2020micromacro,shen2019stationary,GoatinRossi2017, ridder2019traveling} have been investigated.
In \cite{friedrich2020micromacro}, starting from a microscopic approach a class of {nonlocal} second-order models similar to the {local} generalized Aw-Rascle-Zhang \cite{aw2000resurrection,herty2014datagarz,ZHANG2002traffic} has been derived.
In this work, we will focus on this {nonlocal} second order model.

Even though nonlocal traffic flow models have been studied in various research directions over the years, there are only a few works concerning control problems \cite{ bayen2021boundary, huang2020stability, karafyllis2022control}.
In \cite{bayen2021boundary}, a very general result concerning the exact boundary controllability is obtained.
While in \cite{huang2020stability} a Lyapunov function on a ring road with a linear velocity function is studied.
Also, \cite{karafyllis2022control} considers a Lyapunov function on a ring road, but introduces a different model.
Nevertheless, all these references only cover macroscopic models.

In contrast to that, various {local} traffic flow models have been studied concerning stability and the asymptotic behavior in time: macroscopic models are studied in e.g. \cite{bastin2007lyapunov,blandin2016regularity}, microscopic models in e.g. \cite{cui2017stabilizing,karafyllis2022lyapunov}, while in \cite{karafyllis2022stability} both a microscopic and a corresponding macroscopic model are considered.
For all models stationary solutions are investigated, e.g., all cars move with a constant velocity or the traffic density stays constant.
Therefore, the system is in an equilibrium state.
Analytical results on whether the solution reaches its stationary solution over time are shown.
%A tool to prove this is a Lyapunov function and often an exponential decay in time can be proven.

The aim of this work is to obtain Lyapunov functions and an exponential decay rate for nonlocal second-order models \cite{friedrich2020micromacro}.
As a direct consequence we will also obtain the results for the nonlocal first-order model of \cite{friedrich2018godunov}. 
Since microscopic and macroscopic models are directly related by the number of cars, meaning that for an infinite number of cars, a microscopic model should approximate a macroscopic solution, we study both scales. 
Thereby, we will consider rather general velocity functions.

Note that we only consider microscopic and macroscopic scales.
Future work could also consider the mesoscopic description \cite{visconti2020BGK}.
For nonlocal traffic flow models the mesoscopic description was just recently derived in \cite{chiarello2022macroscopic}.

The upcoming work is structured as follows: Section \ref{sec:models} introduces the considered traffic flow model on the microscopic and  macroscopic scale. Section \ref{sec:stable} contains our main results on stationary solutions, their stability, and asymptotic behavior. Further, we compute explicit rates on the decay rate. In the last section, we present numerical examples which demonstrate the theoretical results.
Further, the numerical results probably hold under less restrictive conditions than we assume in the theory.

\section{Nonlocal second order traffic flow models}\label{sec:models}

We recall the nonlocal second order traffic flow model considered in \cite{friedrich2020micromacro}. 

\subsection{The microscopic model}
On the microscopic scale, we consider the description of $N+1$ individual drivers.
First-order nonlocal microscopic traffic flow models were introduced in \cite{GoatinRossi2017, ridder2019traveling}.
Here, the speed of the $i-$th driver depends on a weighted mean downstream velocity (or downstream density).
On  average only cars are considered which are at most $\ndt>0$ far away from the driver $i$.
If $x_i(t)$ is the position of driver $i$ at time $t$, all cars in the interval $[x_i(t),x_i(t)+\ndt]$ are taken into account for the driver $i$ to decide about the speed.
Furthermore, we assume that each driver has an individual empty road velocity that does not change over time and is expressed by a Lagrangian marker $\lm_i$.
This results in the following microscopic equations for $N+1$ vehicles considered in \cite{friedrich2020micromacro}:
\begin{align}
\label{eq:micronewnotation}
\begin{cases}
    \frac{d}{dt} x_i(t) =\sum_{j=0}^{\Ne-1} \gamma_{i,j}(t) V_{i,j}(t),&\qquad i=0,\dots,N-1,\\[5pt]
    \frac{d}{dt} x_N(t)=\bar v,\\[5pt]
    \frac{d}{dt} \lm_i(t)=0,&\qquad i=0,\dots,N-1,
    \end{cases}
\end{align}
where the velocity $V_{i,j}$ is defined as follows
\begin{equation*}
    V_{i,j}(t)=\begin{cases}
        v\left(\frac{1}{N(x_{i+1+j}(t)-x_{i+j}(t))},\lm_{i+j}(t)\right), &\text{ for }i+j\leq N-1\\[5pt]
        v\left(0,\bar v\right)=\bar v, &\text{ for }i+j> N-1,
    \end{cases}
\end{equation*}
with positive weights satisfying $\sum_{j=0}^{\Ne-1} \gamma_{i,j}(t)=1$ for all $i=0,\dots,N-1$ and $t>0$, and
where $x_i(t)$ represents the position of the $i$-th car at time $t$, $N+1$ is the number  of cars, $v$ is a suitable velocity function and $\bar v>0$ is the velocity of the leader.
Here, $\Ne$ is chosen in such a way that $x_{i+\Ne}(t)-x_i(t)\geq \ndt$ with $\ndt>0$, e.g. $\Ne$ can be the smallest integer such that $\Ne \geq (N+1)\ndt$.
We note that there is a degree of freedom in modeling the leading vehicle.
We will later comment on the choice of $\bar v$, as the velocity of the leading vehicle can be used as a control input to the system.

For strictly positive constants $\lm_0,\dots,\lm_{N-1}$, we write the model as follows
\begin{align}
\label{eq:microGARZ}
    \frac{d}{dt} x_i(t) =\sum_{j=0}^{\Ne-1} \gamma_{i,j}(t) V_{i,j}(t),&\qquad i=0,\dots,N
\end{align}
and the velocity $V_{i,j}$ simplifies to
\begin{equation*}
    V_{i,j}(t)=\begin{cases}
        v\left(\frac{1}{N(x_{i+1+j}(t)-x_{i+j}(t))},\lm_{i+j}\right), &\text{ for }i+j\leq N-1\\
        \bar v, &\text{ for }i+j> N-1.
    \end{cases}
\end{equation*}

\begin{remark}
If the Lagrangian markers coincide $\lm_i=\bar v$ for $i=0,\dots,N-1$, the microscopic model reduces to the first order model considered in \cite{ridder2019traveling}.
\end{remark}

Let us now specify the precise formulas for the weights $\gamma_{i,j}$ and the assumptions on $v$.
To define the weights we introduce the kernel function $\twt$ which is defined as
\begin{equation}
\label{eq:kernel}
\twt(x)=\begin{cases}
\wt(x),&\text{if } x\in[0,\ndt],\\
0,&\text{else}.
\end{cases}
\end{equation}
Here, the assumptions on $\wt$ are
\begin{align} \label{eq:ass:kernel}
\wt \in C^1([0,\ndt];\R^+)\  & \text{with} \ \wt'\leq 0 \text{,}\ \int_0^\ndt \wt(x) \dx=1 \ \forall\ \ndt>0.
\end{align}
Now we define the relation between the kernel function $\twt$ and the weights $\gamma$.
It is expressed by the following equation:
\begin{align}\label{eq:microkernel}
    \gamma_{i,j}(t):=\begin{cases}
     \int_{x_{i+j}(t)}^{x_{i+j+1}(t)}\twt(y-x_i(t))\dy,\quad &\text{if } i+j\leq N-1,\\[5pt]
    \int_{\min\{x_{N}(t),x_i(t)+\ndt\}}^{x_{i}(t)+\ndt}\twt(y-x_i(t))\dy,\quad&\text{if }i+j= N,\\[5pt]
    0,&\text{if }i+j>N.
    \end{cases}
\end{align}
Due to the definition of $\twt$ in \eqref{eq:kernel} the weights can be zero for $j$ large, even if $i+j\leq N$.
In particular, the weights are the evaluation of the function $\twt$ over a grid given by the position of the individual cars.
By definition of $\twt$, it is obvious that $\sum_{j=0}^{\Ne-1} \gamma_{i,j}(t)=1$ for all $i=0,\dots,N-1$ and $t>0$ holds.

Next, we state the assumptions on the velocity function.
\rv{We assume $(\rho,\omega) \to v(\rho,\omega)$ to be a  twice continuously differentiable function in $\rho$ and a continuously differentiable function in $\omega$.}
As in \cite{friedrich2020micromacro,herty2014datagarz}, the following \rv{additional} assumptions on the velocity function $v(\rho,\lm)$ are considered:
\begin{subequations}\label{eq:assumptionsvelocity}
    \begin{align} \label{hyp:velocityinzero}
    &v (\rho,\lm)\geq 0, \quad v (0,\lm) = \lm, \quad v (\rho, 0) = 0, \\ \label{eq:velocitydecreasing}
    &\text{for }f(\rho,\lm)=\rho v(\rho,\lm)\text{ we have }\frac{\partial^2 f}{\partial \rho^2}(\rho,\lm) < 0\text{ for }\lm > 0 ,\\ \label{eq:lagrangiandecreasing}
    &\frac{\partial v}{\partial \lm}(\rho,\lm)>0.
    \end{align}
\end{subequations}
The first assumption in \eqref{hyp:velocityinzero} ensures that vehicles never travel backward, while the second one shows why $\lm$ can be interpreted as the empty road velocity. 
Condition \eqref{eq:velocitydecreasing}  implies $\frac{\partial v}{\partial \rho}(\rho,\lm)<0$ for $\lm>0$, \rv{since} $v$ is a $C^2$ function in $\rho$, see also \cite[Lemma 1]{herty2014datagarz}. 
The assumption \eqref{eq:lagrangiandecreasing} implies that a faster empty road velocity results in a faster velocity for all possible densities.
Additionally to \eqref{eq:assumptionsvelocity} we assume
 \begin{equation}\label{eq:assumptionmaximaldensity}
   \forall\ \lm>0 \quad \exists\ \rho^{\max}_\lm>0:\quad  v(\rho^{\max}_\lm,\lm)=0. 
 \end{equation}

The latter assumption is needed to ensure a maximum principle.
\begin{remark}
Note that the maximum principle in \eqref{eq:microGARZ} is kept and cars are not overtaking each other for every $\bar v>0$.
In \cite[Proposition 1]{friedrich2020micromacro} the maximum principle is only proven for the case $\bar v\geq \lm_{N-1}$.
Nevertheless, the result can be extended.
It is obvious that in the case of $\bar v< \lm_{N-1}$ the proof of \cite[Proposition 1]{friedrich2020micromacro} remains valid for $i=1,\ldots,N-2$.
Assume that as in the proof of \cite[Proposition 1]{friedrich2020micromacro} we are at a time $t$ at which the distance of car $N-1$ to the $N$-th car is minimal.
Then, we have
\begin{align*}
\frac{d}{dt}\left(x_N(t)-x_{N-1}(t)\right)=&\bar v-\gamma_{N-1,0}\rv{(t)}v\left(\frac{1}{N\left(x_N(t)-x_{N-1}(t)\right)},\lm_{N-1}\right)-(1-\gamma_{N-1,0}\rv{(t)})\bar v\\
=& \gamma_{N-1,0}\rv{(t)}\left(\bar v-v\left(\frac{1}{N\left(x_N(t)-x_{N-1}(t)\right)},\lm_{N-1}\right)\right)= \gamma_{N-1,0}\rv{(t)}\bar v\geq 0.
\end{align*}
Due to the minimum distance between the $N$-th and $N-1$-th car and Assumption \eqref{eq:assumptionmaximaldensity} on $v$ the last equality holds.
Therefore, the distance is increasing and the cars are not overtaking each other.
\end{remark}

\subsection{The macroscopic model}
In \cite{friedrich2020micromacro} it is proven that the solution of the microscopic model \eqref{eq:microGARZ} converges for $N\to\infty$ towards the weak solution of the following nonlocal system \rv{for $(t,x)\in \R^+\times \R$}:
\begin{equation}\label{eq:macrosystem}
    \begin{cases}
\partial_t    \rho+\partial_x\left(\rho\, V(t,x) \right) =0,\\[3pt]
   \partial_t \lm+ V(t,x)\, \partial_x\lm=0,
    \end{cases}
\end{equation}
where 
\begin{equation}\label{eq:NV:conv}
V(t,x):=\left(\wt \ast v(\rho,\lm)\right) (t,x) = \int_x^{x+\ndt} \wt(y-x)v(\rho(t,y),\lm(t,y))\dy \quad \ndt>0. 
\end{equation}
Note that the flux depends explicitly on $x$ through the convolution product.
The notation $V(t,x)$ is just used as an abbreviation.
Here, $\wt$ fulfills the assumptions \eqref{eq:ass:kernel} and $v$ the assumptions \eqref{eq:assumptionsvelocity} and \eqref{eq:assumptionmaximaldensity}.
The model is strongly inspired by the {local} GARZ model \cite{herty2014datagarz}.
This model generalizes the relations between density, Lagrangian marker, and velocity, to ensure a unique maximum density, which is not usual for most of the commonly known second-order traffic flow models such as the ARZ model.
Hence, we will refer to \eqref{eq:macrosystem} as the nonlocal GARZ model.
The equation needs to be accompanied by the initial conditions: 
\begin{equation}\label{eq:initcond}
\begin{aligned}
 &\rho(0,x)=\rho_0(x)\in \BV(\R),\ \rho_0(x)\leq \rho_{\max}:=\max_{\lm} \rho^\lm_{\max},\ \spt(\rho_0) \text{ compact and connected},\\
 &\lm(0,x)=\lm_0(x)\in \BV(\R).
 \end{aligned}
\end{equation}
The last assumption on $\rho_0$ ensures that the initial density for $x\in \spt(\rho_0)$ is strictly greater than zero.
This assumption is needed to prove the limit from the microscopic to the macroscopic model to ensure that cars are (in the limit) always present on the road.

\begin{remark}
Similar to the microscopic case, the model \eqref{eq:macrosystem} reduces to the first order model considered in \cite{friedrich2018godunov} by choosing a constant Lagrangian marker $\lm(t,x)=\lm\ \forall (t,x)\in \R^+\times \R$.
\end{remark}

Setting $q=\rho \lm$, the model can be written in conservative form for $\rho \neq 0$ as
\begin{equation}\label{eq:macroconservative}
    \begin{cases}
    \partial_t\rho+\partial_x\left(\rho\, V(t,x) \right) =0\\[3pt]
   \partial_t q+ \partial_x\left( q\, V(t,x) \right) =0,
    \end{cases}
\end{equation}
where the convolution product is defined as above and $\lm=q/\rho$.
\section{Lyapunov stabilization}\label{sec:stable}
\subsection{Stability of the microscopic models} \label{sec:firstorderstability}
For deducing the stability of the nonlinear ordinary differential equations \eqref{eq:microGARZ} we will consider the corresponding linearized ODE first.

To determine a linearized ODE we define the distance between two cars
\begin{equation}
y_i(t):=x_{i+1}(t)-x_i(t),\quad \text{for } i=0,\ldots,N-1.
\end{equation}
As the Lagrangian marker is directly given by the initial conditions we consider the system \eqref{eq:micronewnotation}.
Then, we obtain an equilibrium point, if the distance between two cars is constant over the time, i.e. $y_i(t)=\bar L_i$ for some $\bar L_i>0$ and $i=0,\dots,N-1$.
A constant distance between all pairs of cars means that all cars move at the same speed, which will be denoted as the equilibrium speed $\bar v$.
There is a direct relation to compute the distance $\bar L_i$ from a given equilibrium velocity $\bar v$, i.e.
\begin{align*}
v\left(\frac{1}{N\bar L_i},\lm_i\right)=\bar v\ \forall\ i=0,\dots,N-1.
\end{align*}
Hence, for a suitable and given equilibrium velocity we can compute the equilibrium distance $\bar L_i$.
As the Lagrangian marker is the maximum velocity of each individual driver, we have a restriction on $\bar v$, i.e. $\bar v \leq \min_{i=1,\ldots,N-1}\lm_i$ needs to hold.
Further, we need to impose the following assumption:
\begin{assumption}\label{ass:vprime}
We assume that  an upper bound on the derivative of $v$ with respect to $\rho$ exists, i.e.
        $$ \max_{\lm_i=\{\rv{0},\dots,N-1\}}\partial_\rho v(\rho,\lm_i)\leq v'_{\max}<0.$$
\end{assumption}
Next, we state the following result for the stability of the equilibrium states.
\begin{theorem}\label{prop:microstable}
Under Assumption \ref{ass:vprime} the stationary solution $y_i(t)=\bar L_i$, $i=0,\dots,N-1$ for the dynamics given by \eqref{eq:micronewnotation} is (locally) asymptotically stable.
\end{theorem}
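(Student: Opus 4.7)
My plan is to apply Lyapunov's first (indirect) method: linearize \eqref{eq:micronewnotation} at the equilibrium and show that the resulting Jacobian is Hurwitz. The natural variables are the spacings $y_i = x_{i+1} - x_i$ for $i = 0,\dots,N-1$. Because $\dot x_N = \bar v$, one can write $x_{i+j} = x_i + \sum_{k=i}^{i+j-1} y_k$, so both $\gamma_{i,j}$ in \eqref{eq:microkernel} and $V_{i,j}$ in \eqref{eq:micronewnotation} become functions of $(y_i, y_{i+1}, \dots)$ alone. The right-hand side of $\dot y_i = \dot x_{i+1} - \dot x_i$ (with $\dot x_N = \bar v$ when $i = N-1$) therefore depends only on $y_i, y_{i+1}, \dots, y_{N-1}$. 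In the ordering $(y_0, \dots, y_{N-1})$ the Jacobian $A$ of the vector field at the equilibrium is thus upper triangular, so its spectrum coincides with its diagonal.

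Next I would compute the diagonal entries $A_{ii} = \partial_{y_i}\dot y_i$ at the equilibrium. Since $\dot x_{i+1}$ is independent of $y_i$, it suffices to evaluate $-\partial_{y_i}\dot x_i$. Expanding $\dot x_i = \sum_j \gamma_{i,j} V_{i,j}$ by the product rule and using $V_{i,j} \equiv \bar v$ at the equilibrium (for all $j = 0,\dots,\Ne-1$), the weight-derivative contribution collapses to $\bar v\,\partial_{y_i}\sum_j \gamma_{i,j}$, which vanishes by the normalization in \eqref{eq:ass:kernel}; in the boundary regime $x_N - x_i < \ndt$ this contribution becomes $\bar v\,\twt(x_N - x_i) \ge 0$ and only strengthens the sign conclusion below. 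Of the velocity-derivative contribution, only $V_{i,0} = v(1/(Ny_i),\lm_i)$ depends on $y_i$, yielding
$$A_{ii} \;=\; \frac{\gamma_{i,0}}{N\bar L_i^{\,2}}\,\partial_\rho v\bigl(1/(N\bar L_i),\lm_i\bigr) \;\le\; \frac{\gamma_{i,0}\, v'_{\max}}{N\bar L_i^{\,2}} \;<\; 0,$$
where I have used Assumption \ref{ass:vprime} and that $\gamma_{i,0} = \int_0^{\bar L_i}\twt(y)\,dy > 0$.

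With $A$ upper triangular and every diagonal entry strictly negative, $A$ is Hurwitz, so Lyapunov's first method yields local asymptotic stability of $(\bar L_0, \dots, \bar L_{N-1})$ for the nonlinear dynamics \eqref{eq:micronewnotation}. The main obstacle I anticipate is the clean bookkeeping for the truncated weight $\gamma_{i,N-i}$ when the kernel support extends past the leader: the identity $\sum_j \gamma_{i,j} = 1$ then fails, but as indicated above the missing-mass boundary term enters $\partial_{y_i}\dot x_i$ with the helpful sign and therefore does not affect the Hurwitz conclusion.
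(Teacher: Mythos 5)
Your proposal is correct and follows essentially the same route as the paper's proof: pass to the spacing coordinates $y_i$, observe that the forward-looking structure makes the Jacobian upper triangular, and verify via Assumption \ref{ass:vprime} that each diagonal entry is strictly negative, so the linearization is Hurwitz. The only quibble is that, by the definition \eqref{eq:microkernel} of the last weight $\gamma_{i,N-i}$, the normalization $\sum_j \gamma_{i,j}=1$ does \emph{not} fail for cars near the leader --- the boundary term $\bar v\,\twt(x_N-x_i)$ you describe is exactly cancelled by the $y_i$-derivative of that leader weight --- but since that term is nonpositive in either accounting, your sign conclusion and hence the stability argument stand.
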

\begin{proof}
Let us at first consider the cars $i$ for which $x_i(t)<x_N(t)-\ndt$ holds, i.e. the cars which do not see the leading vehicle.
We denote those cars by $i=0,\dots,\mathcal{J}-1$\footnote{Note that $\mathcal{J}=N$ is possible here, then we only have to consider the cars $i=0,\dots,\mathcal{J}-1$.}.
Since $\frac{d}{dt}y_i(t)=\frac{d}{dt}(x_{i+1}(t)-x_i(t))=0$, we obtain
\begin{align*}
\frac{d}{dt}y_i(t)=&\sum_{j=0}^{\Ne-1} \gamma_{i+1,j}(t) v\left(\frac{1}{Ny_{i+j+1}(t)},\lm_{i+j+1}\right)- \gamma_{i,j}(t) v\left(\frac{1}{Ny_{i+j}(t)},\lm_{i+j}\right)\\
=& \sum_{j=1}^{\Ne-1} \left(\gamma_{i+1,j-1}(t)-\gamma_{i,j}(t)\right) v\left(\frac{1}{Ny_{i+j}(t)},\lm_{i+j}\right)- \gamma_{i,0}(t) v\left(\frac{1}{Ny_{i}(t)},\lm_{i}\right)\\
&+\gamma_{i+1,\Ne-1}(t) v\left(\frac{1}{Ny_{i+\Ne}(t)},\lm_{i+\Ne}\right).
\end{align*}
Now, we need to determine the weights \eqref{eq:microkernel} depending on $y_i(t)$:
\begin{align*}
\gamma_{i,j}(t)=& \int_{x_{i+j}(t)}^{x_{i+j+1}(t)}\twt(y-x_i(t))\dy
= \int_{x_{i+j}(t)-x_i(t)}^{x_{i+j+1}(t)-x_i(t)}\twt(y)\dy
= \int_{\sum_{k=0}^{j-1}y_{i+k}(t)}^{\sum_{k=0}^{j}y_{i+k}(t)}\twt(y)\dy.
\end{align*}
This gives us
\begin{align*}
F_i(y):=&\frac{d}{dt}y_i(t)\\
=& \sum_{j=1}^{\Ne-1} \left(\int^{\sum_{k=1}^{j}y_{i+k}(t)}_{\sum_{k=1}^{j-1}y_{i+k}(t)}\twt(y)\dy-\int^{\sum_{k=0}^{j}y_{i+k}(t)}_{\sum_{k=0}^{j-1}y_{i+k}(t)}\twt(y)\dy\right) v\left(\frac{1}{Ny_{i+j}(t)},\lm_{i+j}\right)\\
&- \int_{0}^{y_{i}(t)}\twt(y)\dy v\left(\frac{1}{Ny_{i}(t)},\lm_{i}\right)+\int^{\sum_{k=1}^{\Ne-1}y_{i+k}(t)}_{\sum_{k=1}^{\Ne-2}y_{i+k}(t)}\twt(y)\dy v\left(\frac{1}{Ny_{i+\Ne}(t)},\lm_{i+\Ne}\right).
\end{align*}
As we want to consider a linear ODE by linearizing $F$ around the equilibrium $y_i=\bar L_i$ for all $i=0,\dots,N-1$, we first compute for $i=1,\dots,\mathcal{J}-1$
\begin{align}\notag
\rv{\partial_{y_i}F_i(y)}
=& -\sum_{j=1}^{\Ne-1} \left(\twt\left({\sum_{k=0}^{j}y_{i+k}(t)}\right)-\twt\left({\sum_{k=0}^{j-1}y_{i+k}(t)}\right)\right) v\left(\frac{1}{Ny_{i+j}(t)},\lm_{i+j}\right)\\
&- \twt(y_{i}(t)) v\left(\frac{1}{Ny_{i}(t)},\lm_{i}\right)+\int_{0}^{y_{i}(t)}\twt(y)\dy\, \partial_{\rho}v\left(\frac{1}{Ny_{i}(t)},\lm_{i}\right)\frac{1}{Ny_{i}(t)^2}\notag
\end{align}
Now we repeat the calculations above for $i=\mathcal{J},\dots,N-1$ and obtain:
\begin{align}
\notag  F_i(y)=&\sum_{j=0}^{N-2-i} \gamma_{i+1,j}(t)V_{i+1,j}(t)+ \left(1-\sum_{j=0}^{N-2-i} \gamma_{i+1,j}(t)\right) \bar v\\
\notag  &-\left(\sum_{j=0}^{N-1-i} \gamma_{i,j}(t)V_{i,j}(t)+ \left(1-\sum_{j=0}^{N-1-i} \gamma_{i,j}(t)\right) \bar v\right)\\
\label{eq:detailsidentity}    =&\sum_{j=1}^{N-1-i} (\gamma_{i+1,j-1}(t)-\gamma_{i,j}(t))\left(v\left(\frac{1}{Ny_{i+j}},\lm_{i+j}\right)-\bar v\right)-\gamma_{i,0}(t)\left(v\left(\frac{1}{Ny_{i}},\lm_{i}\right)-\bar v\right)
\end{align}
and
\begin{align}\notag
\partial_{y_i}F_i(y)=& -\sum_{j=1}^{\Ne-1} \left(\wt\left({\sum_{k=0}^{j}y_{i+k}(t)}\right)-\wt\left({\sum_{k=0}^{j-1}y_{i+k}(t)}\right)\right) \left(v\left(\frac{1}{Ny_{i+j}(t)},\lm_{i+j}\right)-\bar v\right)\\
&- \wt(y_{i}(t)) \left(v\left(\frac{1}{Ny_{i}(t)},\lm_{i}\right)-\bar v\right)+\int_{0}^{y_{i}(t)}\wt(y)\dy\, \partial_{\rho}v\left(\frac{1}{Ny_{i}(t)},\lm_{i}\right)\frac{1}{Ny_{i}(t)^2}\notag.
\end{align}
Note that we are not dealing with the case $\gamma_{i,j}(t)=0$ for some $i,j$ and hence we have $\wt$ instead of $\twt$.

Since the cars are only forward looking the Jacobi matrix of $F$ is a upper triangular matrix with eigenvalues given by the entries of its diagonal.
Hence, we set $y_i=\bar L_i$ to obtain the eigenvalues of the linearized ODE, i.e. 
for $i=0,\dots,\mathcal{J}-1$
\begin{align*}
\partial_{y_i}F_i(\bar L)=& -\sum_{j=1}^{\Ne-1} \left(\twt\left({\sum_{k=0}^{j}\bar L_{i+k}}\right)-\twt\left({\sum_{k=0}^{j-1}\bar L_{i+k}}\right)\right) \bar v\\
&- \twt(\bar L_{i}) \bar v+\int_{0}^{y_{i}(t)}\twt(y)\dy\, \partial_{\rho} v\left(\frac{1}{N \bar L_{i}},\bar \lm_{i}\right)\frac{1}{N\bar L_{i}^2}\\
=& \left(\twt\left({\bar L_{i}}\right)-\twt\left({\sum_{k=0}^{\Ne-1}\bar L_{i+k}}\right)\right) \bar v- \twt(\bar L_{i}) \bar v\\
&+\int_{0}^{\bar L_{i}}\twt(y)\dy\, \partial_{\rho} v\left(\frac{1}{N \bar L_{i}},\bar \lm_{i}\right)\frac{1}{N\bar L_{i}^2}\\
=&-\twt\left({\sum_{k=0}^{\Ne-1}\bar L_{i+k}}\right) \bar v+\int_{0}^{L_{i}}\twt(y)\dy\, \partial_{\rho} v\left(\frac{1}{N \bar L_{i}},\bar \lm_{i}\right)\frac{1}{N\bar L_{i}^2}<0
\end{align*}
and for $i=\mathcal{J},\ldots,N-1$
\begin{align*}
\partial_{y_i}F_i(\bar L)=& \int_{0}^{L_{i}}\wt(y)\dy\, \partial_{\rho} v\left(\frac{1}{N \bar L_{i}},\bar \lm_{i}\right)\frac{1}{N\bar L_{i}^2}<0
\end{align*}
due to the assumptions on $\partial_\rho v$.
As the fixed point of the linear ODE is asymptotically stable, so is the fixed point of the nonlinear ODE.
\end{proof}

After having seen that the microscopic traffic model can be stabilized, we define an appropriate Lyapunov function and determine the rate of convergence towards the steady state.

\subsection{Lyapunov function for the density}
We will consider a Lyapunov function measuring the $L^2$ distance of the density.
%For the theoretical result we need to restrict ourselves to constant kernel functions.
%Hence, we impose the following assumption:
%\begin{assumption}\label{ass:kernelcons}
%We set the kernel function to

\subsubsection{The microscopic level}
We consider the microscopic system \eqref{eq:microGARZ} and a given equilibrium velocity $\bar v\leq \min_{i=\rv{0},\ldots, N-1} \lm_i$.
In particular, this velocity of the leading vehicle should control the system towards the corresponding equilibrium.
As the leading vehicle has only an  influence on the cars  in the distance $\ndt$ behind it, we cannot prove a stabilization result for all cars.
\rv{
For a set of specific cars we are able to obtain the following stronger maximum principle:
\begin{lemma}[Maximum principle]\label{lem:max}
Let the initial placement of cars and the equilibrium velocity $\bar v$ be chosen such that $J\in\{0,\dots,N-1\}$ being the smallest integer satisfying
\begin{align}\label{eq:sufficient}
\sum_{i=J}^{N-1} \max\lbrace y_i(0),\bar L_i\rbrace \leq \ndt
\end{align}
exists.
Further, let Assumption \ref{ass:vprime} hold and 
the dynamics given by equation \eqref{eq:microGARZ}.
In addition, we assume either
\begin{enumerate}[label=\alph*)]
    \item the kernel function is constant, i.e. $\wt(x)=\frac{1}{\ndt},$
    or 
    \item the kernel function is concave, $\lm_i=\lm$ for $i=J,\ldots,N-1$, $\lm>0$ and the initial data satisfies either $y_J(0)\geq\dots\geq y_{N-1}(0)\geq \bar L$ or $y_J(0)\leq\dots\leq y_{N-1}(0)\leq \bar L$ with $v(\bar L,\lm)=\bar v$.
\end{enumerate}
Then, a maximum principle holds, i.e.,
\begin{equation}\label{eq:maxmicro}
y_i(t)\in [\min\{y_i(0),\bar L_i\},\max\{y_i(0),\bar L_i\}]\ \forall t>0,\ i=\mathcal{J} ,\dots,N-1.
\end{equation}
\end{lemma}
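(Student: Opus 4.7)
The plan is to treat cases (a) and (b) separately, both resting on the identity \eqref{eq:detailsidentity} derived in the proof of Theorem \ref{prop:microstable}.

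For case (a) (constant kernel), I would first show that whenever condition \eqref{eq:sufficient} holds at time $t$, the weights for cars $i\in\{J,\dots,N-1\}$ simplify to $\gamma_{i,j}(t)=y_{i+j}(t)/\ndt$ for $j=0,\dots,N-1-i$. Consequently $\gamma_{i+1,j-1}(t)=\gamma_{i,j}(t)$ for $j=1,\dots,N-1-i$, and plugging this telescoping identity into \eqref{eq:detailsidentity} collapses the dynamics to the autonomous scalar ODE
\begin{equation*}
\frac{d}{dt}y_i(t) = -\frac{y_i(t)}{\ndt}\Bigl(v\bigl(\tfrac{1}{Ny_i(t)},\lm_i\bigr)-\bar v\Bigr).
\end{equation*}
Its unique fixed point is $\bar L_i$ and its right-hand side has the sign opposite to $y_i-\bar L_i$ (by $\partial_\rho v<0$ and $\bar v=v(1/(N\bar L_i),\lm_i)$), so $y_i(t)$ moves monotonically toward $\bar L_i$ and stays in $[\min\{y_i(0),\bar L_i\},\max\{y_i(0),\bar L_i\}]$. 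A standard continuation argument then confirms that \eqref{eq:sufficient} persists for all $t\ge 0$, since at every time $\sum_{i=J}^{N-1}y_i(t)\le \sum_{i=J}^{N-1}\max\{y_i(0),\bar L_i\}\le \ndt$.

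For case (b) (concave kernel, $\lm_i\equiv\lm$, monotone data), assume without loss of generality the non-increasing configuration $y_J(0)\ge\dots\ge y_{N-1}(0)\ge \bar L$ (the other case is symmetric), with $\bar L=\bar L_i$ the common equilibrium spacing. The plan is to show that the region
\begin{equation*}
\Sigma=\Bigl\{y\in\R_+^{N-J}\,:\,y_J\ge y_{J+1}\ge\dots\ge y_{N-1}\ge \bar L,\ \textstyle\sum_{i=J}^{N-1}y_i\le \ndt\Bigr\}
\end{equation*}
is forward invariant under the flow, and that $F_i(y)\le 0$ throughout $\Sigma$; the lemma then follows since invariance yields $y_i(t)\ge\bar L$ and $F_i\le 0$ yields $y_i(t)\le y_i(0)$. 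Proving $F_i\le 0$ on $\Sigma$ is the easier step: the ordering $y_{i+1},\dots,y_{N-1}\le y_i$ gives $v(1/(Ny_k),\lm)\le v(1/(Ny_i),\lm)$ for $k\ge i+1$, while $\wt'\le 0$ gives $\gamma_{i+1,k-i-1}-\gamma_{i,k-i}\ge 0$; majorizing each factor $(v(1/(Ny_k),\lm)-\bar v)\ge 0$ in \eqref{eq:detailsidentity} by $(v(1/(Ny_i),\lm)-\bar v)$ and telescoping produces
\begin{equation*}
F_i(y)\le \bigl(v(\tfrac{1}{Ny_i},\lm)-\bar v\bigr)\bigl(\gamma_{i,N-i}-\gamma_{i+1,N-1-i}\bigr)\le 0,
\end{equation*}
the last inequality holding because the integration interval defining $\gamma_{i,N-i}$ is contained in that defining $\gamma_{i+1,N-1-i}$.

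The main obstacle will be the forward invariance of $\Sigma$. The face $\{y_{N-1}=\bar L\}$ is handled directly since $F_{N-1}=-\gamma_{N-1,0}(v(1/(N\bar L),\lm)-\bar v)=0$ there. The delicate face is $\{y_i=y_{i+1}\}$, on which I must verify $F_i\ge F_{i+1}$. Writing out \eqref{eq:detailsidentity} for both indices and using $\gamma_{i,0}=\gamma_{i+1,0}$ and $v(1/(Ny_i),\lm)=v(1/(Ny_{i+1}),\lm)$ at $y_i=y_{i+1}=:z$, the difference $F_i-F_{i+1}$ decomposes as the non-negative term $(\gamma_{i+1,0}-\gamma_{i,1})(v(1/(Nz),\lm)-\bar v)\ge 0$ plus a sum over $k\ge i+2$ of $\bigl(2\gamma_{i+1,k-i-1}-\gamma_{i,k-i}-\gamma_{i+2,k-i-2}\bigr)\bigl(v(1/(Ny_k),\lm)-\bar v\bigr)$. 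After a shift of integration variable, each such coefficient equals
\begin{equation*}
\int_0^{y_k}\bigl[2\wt(s+z+W_k)-\wt(s+W_k)-\wt(s+2z+W_k)\bigr]\,ds,
\end{equation*}
with $W_k:=y_{i+2}+\dots+y_{k-1}$ and all arguments lying in $[0,\ndt]$ thanks to $y\in\Sigma$. This is precisely where concavity of $\wt$ enters essentially: the pointwise midpoint inequality $2\wt(a+z)\ge \wt(a)+\wt(a+2z)$ renders each integrand non-negative, and combined with $v(1/(Ny_k),\lm)-\bar v\ge 0$ on $\Sigma$ yields $F_i\ge F_{i+1}$, closing the invariance argument.
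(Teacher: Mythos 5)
Your proposal is correct, and it splits naturally into a part that mirrors the paper and a part that reorganizes it. Case~(a) is essentially the paper's own argument: the telescoping of the weights for the constant kernel collapses \eqref{eq:detailsidentity} to $\frac{d}{dt}y_i=-\frac{y_i}{\ndt}\bigl(v(\tfrac{1}{Ny_i},\lm_i)-\bar v\bigr)$ (the paper's \eqref{eq:dyconstant}), the sign structure traps $y_i$ between $y_i(0)$ and $\bar L_i$ (the paper phrases this via the mean value theorem and Gr\"onwall rather than a phase-line argument), and the resulting bound $\sum_{i=J}^{N-1}y_i(t)\le\sum_{i=J}^{N-1}\max\{y_i(0),\bar L_i\}\le\ndt$ closes the bootstrap guaranteeing that these cars keep seeing the leader. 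For case~(b) the paper (Lemma \ref{lem:maxappendix}) proceeds in three sequential steps --- the bound by $\bar L$ via downward induction using only $\wt'\le 0$, then preservation of the ordering using concavity, then the bound by $y_i(0)$ --- whereas you package the ordering, the bound by $\bar L$ and the sum constraint into one forward-invariant region $\Sigma$ and extract the bound by $y_i(0)$ from $F_i\le 0$ on $\Sigma$. The substance coincides: your face condition $F_i\ge F_{i+1}$ at $y_i=y_{i+1}$ is the paper's second-difference estimate on $\gamma_{i+2,j-2}-2\gamma_{i+1,j-1}+\gamma_{i,j}$, with concavity entering through the midpoint inequality where the paper uses two mean-value-theorem applications and the monotonicity of $\wt'$; your observation that $\sum y_i\le\ndt$ keeps every kernel argument inside $[0,\ndt]$ --- so that concavity of $\wt$ rather than of the truncated $\twt$ is what is actually used --- is precisely the role of condition \eqref{eq:sufficient}; and your telescoped bound $F_i\le\bigl(v(\tfrac1{Ny_i},\lm)-\bar v\bigr)\bigl(\gamma_{i,N-i}-\gamma_{i+1,N-1-i}\bigr)\le 0$ matches the paper's final estimate (with signs reversed, since the appendix details the increasing configuration and leaves your decreasing one to symmetry). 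Your packaging is arguably cleaner in that the interdependence of the three properties is made explicit at once; the paper's sequential version has the advantage that the $\bar L$-bound is obtained without concavity. The one step to make explicit is the invariance itself: the field is only tangent, not strictly inward, on the faces $\{y_{N-1}=\bar L\}$ and $\{y_i=y_{i+1}\}$, so a bare tangency check does not conclude; you need the Gr\"onwall-type differential inequalities $\frac{d}{dt}(y_i-y_{i+1})\ge C(t)(y_i-y_{i+1})$ that your face computations already furnish, exactly as the paper writes them out.
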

\begin{proof}
We define 
\begin{equation}\label{eq:bulk}
\BB:=\{i\in \{1,\ldots,N-1\} |x_i(t)\in [x_N(t)-\ndt,x_N(t)]\ \forall t\geq 0\}.
\end{equation}
Here, $\BB$ is the set of cars that are at maximum $\ndt$ behind the leading vehicle for all times $t\geq 0$.
Note that the set $\BB$ may  be empty.
If it is not empty, we set $\mathcal{J}:=\min \BB$.
%%%{ \textcolor{red}\Large What happens when this set is empty? }
This gives the integer of the car being the ''last'' car which is always in the interval $[x_N(t)-\ndt,x_N(t)]$.
Now, we are able to provide a stricter maximum principle with respect to the distance and the microscopic density, respectively, for the set $\BB$.\\
Let us assume $\BB\neq \emptyset$.
We prove the claim by induction and starting at car $N-1$:
\begin{align*}
    \frac{d}{dt}\left(y_{N-1}(t)-\bar L_{N-1}\right)=&\bar v- \gamma_{N-1,0}(t) v\left(\frac{1}{Ny_{N-1}(t)},\lm_{N-1}\right)-(1-\gamma_{N-1,0}(t))\bar v\\
    =& \gamma_{N-1,0}(t)\left(v\left(\frac{1}{N\bar L_{N-1}},\lm_{N-1}\right)-  v\left(\frac{1}{Ny_{N-1}(t)},\lm_{N-1}\right)\right),\\
    \intertext{where we use the definition of the equilibrium velocity. Now, we apply the mean value theorem: once for $v(\cdot,\lm_{N-1})$ and once for $\gamma_{N-1,0}(t)=\int_0^{y_{N-1}(t)} \wt(y)dy=\wt(\zeta_{N-1}(t))\,y_{N-1}(t)$. This leaves us with}
    =&\wt(\zeta_{N-1}(t))\,y_{N-1}(t)\,\partial_\rho v(\xi_{N-1}(t),\lm_{N-1}) \left(\frac{1}{N\bar L_{N-1}}-\frac{1}{Ny_{N-1}(t)}\right)\\
    =&\wt(\zeta_{N-1}(t))\,\partial_\rho v(\xi_{N-1}(t),\lm_{N-1}) \frac{1}{N\bar L_{N-1}}\left(y_{N-1}(t)-\bar L_{N-1}\right).
\end{align*}
A solution to the ordinary differential equation is given by
\begin{align}\label{eq:dNm1}
    y_{N-1}(t)-\bar L_{N-1}=\left(y_{N-1}(0)-\bar L_{N-1}\right)\exp\left(\frac{\int_0^t \wt(\zeta_{N-1}(s))\, \partial_\rho v(\xi_{N-1}(s),\lm_{N-1}) ds}{N\bar L_{N-1}}\right).
\end{align}
Due to the negative sign of the derivative of $v$ we bound  the exponential term by one. 
Let us consider $y_{N-1}(0)\leq \bar L_{N-1}$, then \eqref{eq:dNm1} is estimated from above by zero, such that $y_{N-1}(t)\leq \bar L_{N-1}$ holds.
Further, we estimate the term from below by $y_{N-1}(0)-\bar L_{N-1}$.
Hence, $y_{N-1}(t)\in[y_{N-1}(0),\bar L_{N-1}]$ holds.
The case $y_{N-1}(0)>\bar L_{N-1}$ works analogously.
This proves the claim for the car $N-1$:
\[y_{N-1}(t)\in [\min\{y_{N-1}(0),\bar L_{N-1}\},\max\{y_{N-1}(0),\bar L_{N-1}\}].\]
Now, we consider a car $i\in\{\mathcal{J},\ldots,N-2\}$ and suppose the claim holds for all cars $j=i+1,\ldots,N-1$.
Following the calculations to obtain \eqref{eq:detailsidentity} in Theorem \ref{prop:microstable}, we obtain
\begin{align}\label{eq:dygeneral}
    \frac{d}{dt}\left(y_i(t)-\bar L_i\right)=&\sum_{j=1}^{N-1-i} (\gamma_{i+1,j-1}(t)-\gamma_{i,j}(t))\left(v\left(\frac{1}{Ny_{i+j}\rv{(t)}},\lm_{i+j}\right)-\bar v\right)\\
    \nonumber &-\gamma_{i,0}(t)\left(v\left(\frac{1}{Ny_{i}\rv{(t)}},\lm_{i}\right)-\bar v\right).
\end{align}
We need to distinguish the two cases a) and b) and start with the consideration of the  constant kernel. In this case,
\begin{align}\label{eq:dyconstant}
    \frac{d}{dt}(y_i(t)-\bar L_i)=-\frac{y_i(t)}{\ndt}\left(v\left(\frac{1}{Ny_{i}(t)},\lm_{i}\right)-\bar v\right).
\end{align}
Again, using the mean value theorem, where $\xi_i(t)$ denotes the corresponding value between $1/(Ny_i(t))$ and $1/(N\bar L_i)$, we see that
\begin{align*}
    \frac{d}{dt}(y_i(t)-\bar L_i)=\frac{1}{N\bar L_i\, \ndt}\partial_\rho v\left(\xi_i(t),\lm_{i}\right)(y_i(t)-\bar L_i),
\end{align*}
such that we obtain
\begin{align*}
    y_i(t)-\bar L_i=(y_i(0)-\bar L_i)\exp\left(\frac{1}{N\bar L_i\, \ndt}\int_0^t \partial_\rho v\left(\xi_i(s),\lm_{i}\right)ds\right).
\end{align*} 
Following the same arguments as for the car $N-1$, we  deduce the maximum principle \eqref{eq:maxmicro}.
This finishes the proof for the case a) and all cars which are contained in the set $\mathcal{B}.$\\
The proof of the maximum principle in the case b) is more involved.
For simplicity, we only outline the main steps for the case $y_J(0)\leq\dots\leq y_{N-1}(0)\leq \bar L$.
The details can be found in Lemma \ref{lem:maxappendix} in the Appendix.
\begin{enumerate}
    \item We start by proving the upper bound $\bar L$ on $y_i(t)$.
    \item Using the upper bound, we  show that the monotonicity of the initial data is preserved, if the kernel function is concave.
    \item Finally, this allows to prove the lower bound $y_i(0)\leq y_i(t)$.
\end{enumerate}
In case of monotone decreasing initial data,  a similar proof applies.  Hence, we have established the maximum principle under the assumption that $\BB\neq\emptyset$ and $J\geq \mathcal{J}$. 
It remains to prove that those assumptions are valid.
The upper bounds on the distances $y_i(t)$ for $i=\mathcal{J},\dots,N-1$ yield
\begin{align*}
\sum_{i=\mathcal{J}}^{N-1} y_i(t)\leq \sum_{i=\mathcal{J}}^{N-1} \max\{y_i(0),\bar L_i\}.
\end{align*}
This motivates the choice of the initial conditions in \eqref{eq:sufficient}.
In fact, $J$ is an a priori upper bound on $\mathcal{J}$.
Hence, under condition \eqref{eq:sufficient} also $\mathcal{J}$ exists and $\BB\neq \emptyset$ is guaranteed.
\end{proof}
\begin{remark}
Under the assumptions of Lemma \ref{lem:max} we obtain a lower bound on the microscopic density for the cars behind the leading vehicle, i.e.,
\begin{align}\label{eq:rhominmicro}
   \rho_{\min}:=\min_{i=J,\ldots,N} \min\left\{\frac{1}{Ny_i(0)},\frac{1}{N\bar L_i}\right\}.  
\end{align}
\end{remark}
} 
%%%%%%%%%%%%
\rv{
This maximum principle is one of the keys to prove the following stabilization results:
\begin{theorem}[Lyapunov stabilization: constant kernel]\label{thm:microconstant}
Let the initial placement of cars be as in Lemma \ref{lem:max}.
Further, let Assumption \ref{ass:vprime} hold and
the dynamics given by equation \eqref{eq:microGARZ}.
The kernel function shall be constant, i.e. $\wt(x)=\frac{1}{\ndt}$.
We define the Lyapunov function
\begin{align}\label{eq:microLyapunov}
    L(t):=\sum_{i=J}^{N-1}y_i(t)\left( \frac{1}{N y_i(t)}-\frac{1}{N \bar L_i}\right)^2.
\end{align}
Then, we have
\begin{align*}
    L(t)\leq L(0) \exp\left(\frac{2}{\ndt} v'_{\max}\, \rho_{\min}\,t\right) \ \forall t\geq 0,
\end{align*}
where $\rho_{\min}$ is given by \eqref{eq:rhominmicro} and $\bar L_i$ is the equilibrium distance $v\left(\frac{1}{N \bar L_i},\lm_i\right)=\bar v$.
\end{theorem}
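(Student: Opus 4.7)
The plan is to differentiate the Lyapunov function $L$ in \eqref{eq:microLyapunov} termwise, exploit the scalar ODE \eqref{eq:dyconstant} established inside the proof of Lemma \ref{lem:max}, and close the estimate with a Grönwall-type argument. Throughout I introduce the shorthand $\rho_i(t) := \frac{1}{N y_i(t)}$ and $\bar\rho_i := \frac{1}{N \bar L_i}$, so that $L(t) = \sum_{i=J}^{N-1} y_i (\rho_i - \bar\rho_i)^2$. The key elementary observation is that since $\dot\rho_i = -\dot y_i/(N y_i^2) = -\rho_i \dot y_i / y_i$, a direct computation collapses each summand into
\begin{equation*}
\frac{d}{dt}\bigl[y_i (\rho_i - \bar\rho_i)^2\bigr]
= \dot y_i(\rho_i - \bar\rho_i)^2 + 2 y_i (\rho_i - \bar\rho_i) \dot\rho_i
= -\dot y_i (\rho_i - \bar\rho_i)(\rho_i + \bar\rho_i).
\end{equation*}

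Next I would use the mean-value theorem to rewrite $v(\rho_i,\lambda_i) - \bar v = v(\rho_i,\lambda_i) - v(\bar\rho_i,\lambda_i) = \partial_\rho v(\xi_i,\lambda_i)(\rho_i - \bar\rho_i)$ for some $\xi_i$ between $\rho_i$ and $\bar\rho_i$. Inserting this identity into \eqref{eq:dyconstant} gives $\dot y_i = -\frac{y_i}{\ndt}\,\partial_\rho v(\xi_i,\lambda_i)(\rho_i - \bar\rho_i)$, and substituting back into the expression above and summing produces
\begin{equation*}
\dot L(t) = \sum_{i=J}^{N-1} \frac{y_i}{\ndt}\, \partial_\rho v(\xi_i, \lambda_i)\,(\rho_i - \bar\rho_i)^2 (\rho_i + \bar\rho_i).
\end{equation*}

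The final step combines the two available a priori bounds. Assumption \ref{ass:vprime} yields $\partial_\rho v(\xi_i, \lambda_i) \le v'_{\max} < 0$, while Lemma \ref{lem:max} forces $y_i(t) \in [\min\{y_i(0),\bar L_i\},\max\{y_i(0),\bar L_i\}]$ for every $i\ge J$, so $\rho_i(t) \ge \rho_{\min}$ along the trajectory, and by definition \eqref{eq:rhominmicro} also $\bar\rho_i \ge \rho_{\min}$; hence $\rho_i + \bar\rho_i \ge 2\rho_{\min}$. Since each factor $y_i(\rho_i - \bar\rho_i)^2$ is nonnegative and $v'_{\max}$ is negative, replacing $\partial_\rho v(\xi_i, \lambda_i)(\rho_i + \bar\rho_i)$ by $2\rho_{\min} v'_{\max}$ only makes the (negative) summand larger, giving $\dot L(t) \le \tfrac{2\rho_{\min} v'_{\max}}{\ndt}\,L(t)$. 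Grönwall's inequality then delivers the stated exponential bound.

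The only real point of care is bookkeeping of signs: because $v'_{\max}<0$, each inequality has to be applied in the direction that preserves the upper bound on $\dot L$, and both positive factors $y_i$ and $(\rho_i + \bar\rho_i)$ must be controlled \emph{uniformly in time}. This is exactly what the maximum principle of Lemma \ref{lem:max} supplies; without it one could not secure the time-uniform lower bound $\rho_{\min}$ on $\rho_i(t)$, and the decay rate in the exponential would degrade.
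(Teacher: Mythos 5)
Your proposal is correct and follows essentially the same route as the paper: termwise differentiation of $L$ collapsing to $-\dot y_i(\rho_i-\bar\rho_i)(\rho_i+\bar\rho_i)$, insertion of \eqref{eq:dyconstant} via the mean value theorem, the bound $\rho_i+\bar\rho_i\ge 2\rho_{\min}$ from the maximum principle of Lemma \ref{lem:max}, and Grönwall. The sign bookkeeping you highlight is exactly the point the paper's proof also relies on.
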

\begin{proof}
\rv{The time derivative of the Lyapunov function $L(t)$ is given by}
\begin{align*}
    \frac{d}{dt}L(t)=&\sum_{i=J}^{N-1}2y_i(t)\left( \frac{1}{N y_i(t)}-\frac{1}{N \bar L_i}\right)\left(-\frac{1}{Ny_i(t)^2}\right)\frac{d}{dt}y_i(t)+\left( \frac{1}{N y_i(t)}-\frac{1}{N \bar L_i}\right)^2\frac{d}{dt}y_i(t)\\
    =&\sum_{i=J}^{N-1}\left( \frac{1}{N y_i(t)}+\frac{1}{N \bar L_i}\right)\left( \frac{1}{N \bar L_i}-\frac{1}{N y_i(t)}\right)\frac{d}{dt}y_i(t).
    \intertext{\rv{By plugging in the derivative \eqref{eq:dyconstant} of $y_i(t)$ as well as using the mean value theorem with the corresponding value $\xi_i(t)$ we obtain}}
    =& \frac{1}{\ndt}\sum_{i=J}^{N-1} \partial_\rho v(\xi_i\rv{(t)},\lm_i)\left( \frac{1}{N y_i(t)}+\frac{1}{N \bar L_i}\right)y_i(t)\left( \frac{1}{N y_i(t)}-\frac{1}{N \bar L_i}\right)^2.\\
    \intertext{\rv{Finally, we estimate this using the maximum principle}}
    \leq &\frac{2}{\ndt} v'_{\max}\, \rho_{\min} L(t).
\end{align*}
Applying Gr\"onwall's inequality yields
\begin{align*}
    L(t)\leq L(0) \exp\left(\frac{2}{\ndt} v'_{\max}\, \rho_{\min}t\right).
\end{align*}
Note that the rate is negative due to the sign of $v'_{\max}$.
\end{proof}
We note that a constant convolution kernel can, e.g. model connected autonomous vehicles, which have the same degree of accuracy on information about the downstream traffic, independent of the distance.
In the case of non-constant kernels the accuracy of information decreases with the distance.
For concave kernels we obtain a similar stabilization result:
\begin{theorem}[Lyapunov stabilization: concave kernel]\label{thm:microconcave}
Let the initial placement of cars be as in Lemma \ref{lem:max} and additionally we assume $\lm_i=\lm$ for $i=J,\ldots,N-1$ and the initial datum satisfies either $y_J(0)\geq\dots\geq y_{N-1}(0)\geq \bar L$ or $y_J(0)\leq\dots\leq y_{N-1}(0)\leq \bar L$ with $v(\bar L, \lm)=\bar v$.
Further, let Assumption \ref{ass:vprime} hold and
the dynamics given by equation \eqref{eq:microGARZ}.
The kernel function shall be concave.
Then, we obtain for the Lyapunov function \eqref{eq:microLyapunov}
\begin{align*}
    L(t)\leq L(0) \exp\left(2\, v'_{\max}\,  \rho_{\min}\int_0^t \wt(x_N(s)-x_J(s))ds\right) \ \forall\, t\geq 0.
\end{align*}
\end{theorem}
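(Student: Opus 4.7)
The plan is to follow the structure of Theorem \ref{thm:microconstant}, adapting each step to accommodate the non-constant concave kernel. First I differentiate the Lyapunov function. Since the assumption $\lm_i=\lm$ gives $\bar L_i=\bar L$ for all $i\in\{J,\ldots,N-1\}$, and since the identity for $\frac{d}{dt}L(t)$ in the constant-kernel proof only used the product rule and never the explicit form of $\wt$, the same formula
$$\frac{d}{dt}L(t)=\sum_{i=J}^{N-1}\left(\frac{1}{Ny_i(t)}+\frac{1}{N\bar L}\right)\left(\frac{1}{N\bar L}-\frac{1}{Ny_i(t)}\right)\frac{d}{dt}y_i(t)$$
applies. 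I then invoke Lemma \ref{lem:max}(b): the $y_i(t)$ remain in $[\min(y_i(0),\bar L),\max(y_i(0),\bar L)]$ and, as stated in step~(2) of its proof outline, the monotonicity of $(y_i)_i$ is preserved by concavity of $\wt$. In particular all $d_i(t):=1/(Ny_i(t))-1/(N\bar L)$ share a common sign, which is the structural input needed below.

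Next I substitute expression \eqref{eq:dygeneral} for $\frac{d}{dt}y_i(t)$ and apply the mean value theorem to write $v(1/(Ny_k),\lm)-\bar v=\partial_\rho v(\xi_k,\lm)\,d_k(t)$. This decomposes $\frac{d}{dt}y_i$ into a \emph{direct} part $\gamma_{i,0}\,\partial_\rho v(\xi_i,\lm)\,d_i$, analogous to the constant-kernel case, and a \emph{cross} part $\sum_{j\geq 1}(\gamma_{i+1,j-1}-\gamma_{i,j})\,\partial_\rho v(\xi_{i+j},\lm)\,d_{i+j}$, which vanishes when $\wt$ is constant. For the direct part I exploit that $\wt$ is decreasing and that $y_i(t)\leq x_N(t)-x_J(t)$ for all $i\in\{J,\ldots,N-1\}$ (since these cars lie in the bulk $\BB$) to obtain
$$\gamma_{i,0}(t)=\int_0^{y_i(t)}\wt(y)\,dy\;\geq\; y_i(t)\,\wt(y_i(t))\;\geq\; y_i(t)\,\wt(x_N(t)-x_J(t)).$$
Combined with Assumption \ref{ass:vprime} and the definition of $\rho_{\min}$ in \eqref{eq:rhominmicro}, the direct contribution to $\frac{d}{dt}L(t)$ is bounded by $2v'_{\max}\rho_{\min}\wt(x_N(t)-x_J(t))\,L(t)$, exactly the target rate with the factor $1/\ndt$ of the constant case replaced by $\wt(x_N-x_J)$.

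The hard part is the cross terms: unlike the constant case where they telescope to zero, here they are genuinely nonzero. My strategy is Abel summation in the index $i$, transferring the kernel differences $\gamma_{i+1,j-1}-\gamma_{i,j}$, which admit the integral representation $\int_0^{y_{i+j}}[\wt(s+u-y_i)-\wt(s+u)]\,du$ and have a definite sign because $\wt$ is decreasing, onto the monotone sequence $\bigl(1/(Ny_i)^2-1/(N\bar L)^2\bigr)_i$, whose monotonicity in $i$ is a consequence of Lemma \ref{lem:max}(b). Concavity of $\wt$ is crucial at this step: it controls the integrand linearly in $y_i$ via $|\wt'|$ evaluated at a controlled point, so that after summation the cross contribution can be absorbed into the negative direct contribution while preserving the prefactor $\wt(x_N-x_J)$; this absorption is the main technical obstacle and is where the sign-consistency of the $d_i$'s from the preserved monotonicity enters decisively. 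Once the differential inequality $\frac{d}{dt}L(t)\leq 2v'_{\max}\rho_{\min}\wt(x_N(t)-x_J(t))\,L(t)$ is established, Gr\"onwall's inequality yields the claimed bound; the exponent is negative since $v'_{\max}<0$.
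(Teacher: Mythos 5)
Your setup, your use of Lemma \ref{lem:max}(b), and your treatment of the ``direct'' term are all sound and match the paper's strategy up to that point. The genuine gap is in the cross terms, which you yourself flag as the main obstacle but then only sketch. With your decomposition (keeping the cross terms centered at $\bar v$), each cross summand contributes a quantity of the \emph{wrong} sign to $\frac{d}{dt}L(t)$: in the increasing case one has $\left(\tfrac{1}{N\bar L}-\tfrac{1}{Ny_i}\right)\le 0$, $\gamma_{i+1,j-1}-\gamma_{i,j}\ge 0$ and $v\left(\tfrac{1}{Ny_{i+j}}\right)-\bar v\le 0$, so the cross contribution to $\frac{d}{dt}L$ is nonnegative and cannot be dropped --- it must be genuinely dominated by the direct part. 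Your proposed Abel summation plus ``concavity controls the integrand linearly via $|\wt'|$'' is not carried out, and it is not evident that it closes with the prefactor $\wt(x_N-x_J)$ intact; as written, the absorption claim is load-bearing and unproven.

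The paper avoids this entirely by a different decomposition: inside \eqref{eq:dygeneral} it adds and subtracts $v\left(\tfrac{1}{Ny_i(t)}\right)$, so the cross sum becomes $\sum_{j}(\gamma_{i+1,j-1}-\gamma_{i,j})\left(v\left(\tfrac{1}{Ny_{i+j}}\right)-v\left(\tfrac{1}{Ny_i}\right)\right)$, which is termwise nonnegative by the preserved monotonicity and can therefore be discarded from the lower bound on $\frac{d}{dt}y_i(t)$. The price is that the coefficient of $v\left(\tfrac{1}{Ny_i}\right)-\bar v$ changes from $-\gamma_{i,0}$ to $\sum_j\gamma_{i+1,j-1}-\sum_j\gamma_{i,j}=-\int_{x_i(t)}^{x_{i+1}(t)}\wt(x_N(t)-y)\,dy$, i.e.\ the kernel is evaluated near the far end of the horizon rather than near zero --- and this is exactly where the factor $\wt(x_N(s)-x_J(s))$ in the stated rate comes from. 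Your direct-part bound $\gamma_{i,0}\ge y_i\,\wt(x_N-x_J)$ is correct but is not the mechanism that produces the theorem's rate; the rate is dictated by the re-centered coefficient. Also note that concavity of $\wt$ enters only through the monotonicity-preservation argument of Lemma \ref{lem:maxappendix} (via a second-difference estimate on the weights), not directly in the Lyapunov estimate, whereas your plan invokes it in an additional, unverified role. To repair your proof, replace the absorption argument by the re-centering identity above.
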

\begin{proof}
As in the proof of Theorem \ref{thm:microconstant}, we have 
\begin{align*}
    \frac{d}{dt}L(t)=&\sum_{i=J}^{N-1}\left( \frac{1}{N y_i(t)}+\frac{1}{N \bar L}\right)\left( \frac{1}{N \bar L}-\frac{1}{N y_i(t)}\right)\frac{d}{dt}y_i(t).
\end{align*}
We start again with considering the case $y_J(0)\leq\dots\leq y_{N-1}(0)\leq \bar L$.
For simplicity, we drop the dependence of $v$ on $\lm$.
Starting form the derivative \eqref{eq:dygeneral} and adding a zero, we obtain
\begin{align*}
\frac{d}{dt}y_i(t)=&\sum_{j=1}^{N-1-i} (\gamma_{i+1,j-1}(t)-\gamma_{i,j}(t))\left(v\left(\frac{1}{Ny_{i+j}(t)}\right)-v\left(\frac{1}{Ny_{i}(t)}\right)\right)\\
      \nonumber &+\left(\sum_{j=1}^{N-1-i} \gamma_{i+1,j-1}(t)-\sum_{j=0}^{N-1-i}\gamma_{i,j}(t)\right)\left(v\left(\frac{1}{Ny_{i}(t)}\right)-\bar v\right).
      \intertext{We use that the monotonicity is kept (see the proof of Lemma \ref{lem:maxappendix}) and the definition of the weights to get}
    \geq &-\int_{x_i(t)}^{x_{i+1}(t)}\wt(x_N-y)dy\left(v\left(\frac{1}{Ny_{i}(t)}\right)-\bar v\right)= y_i(t)\wt(\zeta^N_i(t)) \left(\bar v-v\left(\frac{1}{Ny_{i}(t)}\right)\right),
    \intertext{where $\zeta^N_i(t)\in[x_N(t)-x_{i+1}(t),x_N(t)-x_i(t)]$. We apply the mean value theorem in $v$ with the corresponding value $\xi_i(t)$ to obtain}
    \geq& -y_i(t)\wt(\zeta^N_i(t))\,\partial_\rho v\left(\xi_i(t)\right)\left(\frac{1}{N y_i(t)}-\frac{1}{N\bar L}\right).
\end{align*}
Due to the maximum principle we can estimate
\begin{align*}
    \frac{d}{dt}L(t)\leq &\sum_{i=J}^{N-1}\left( \frac{1}{N y_i(t)}+\frac{1}{N \bar L}\right)\wt(\zeta^N_i(t))\,\partial_\rho v\left(\xi_i(t)\right)y_i(t)\left(\frac{1}{N y_i(t)}-\frac{1}{N\bar L}\right)^2\\
    \leq & 2 \wt(x_N(t)-x_J(t)) v'_{\max}\, \rho_{\min} L(t).
\end{align*}
In the case that the initial distances are decreasing we can proceed analogously.
Further, applying Grönwall's inequality finishes the proof.
\end{proof}
}
\rv{
We note that the case of increasing distances which are lower than the equilibrium distance is more relevant for applications.
For instance, think of a traffic jam in which the leader steers the whole traffic towards free flow with a desired velocity.
\begin{remark}
The stabilization results of the Theorems \ref{thm:microconstant} and \ref{thm:microconcave} include the cars $J,\ldots,N-1$.
If we consider the condition \eqref{eq:sufficient} from a different point of view, it designates how large $\ndt$ should be such that our proof provides the stabilization of all cars.
This is achieved by choosing 
\[\ndt\geq\sum_{j=0}^{N-1}\max\{y_i(0),\bar L_i\}.\]
Nevertheless, we note that the numerical results demonstrate that the stabilization of all cars is obtained for every $\ndt>0$.
\end{remark}
The main ingredients to the previous proofs are the maximum principle \eqref{eq:maxmicro} and additionally for the concave kernel that the solutions are monotonicity preserving.
The latter allows us to treat the concave case very similar to the constant case by using the identity \eqref{eq:dyconstant}.
In particular, in both proofs we rely on the relation between the velocity function evaluated at $y_i(t)$ and the equilibrium velocity $\bar v$.
Unfortunately, for non-concave kernels and general initial data such a direct comparison is not possible as we need to treat the nonlinear term in \eqref{eq:dygeneral}. 
Further, the stricter maximum principle \eqref{eq:maxmicro} might be no longer valid.
This makes it challenging to generalize the obtained results.
Nevertheless, the numerical simulations support a similar result for non-concave kernels, see Section \ref{sec:numeric}.\\
The maximum principle provides a sufficient condition to guarantee that a certain number of cars is influenced by the leading vehicle, too.
This avoids the use of the technical set $\BB$ that might be empty. 
If we assume however, that $\BB\neq\emptyset$ holds, we can  generalize the results above:
\begin{corollary}\label{cor:microlyapunovgeneral}
Assume $\BB\neq \emptyset$ with $\BB$ defined by \eqref{eq:bulk}.
Consider a subset $B\subset \BB$ and suppose that either
\begin{enumerate}[label=\alph*)]
  \item the kernel function is constant, i.e. $\wt(x)=\frac{1}{\ndt}$, or
    \item the kernel function is concave, $\lm_i=\lm$ for $i\in B$ and the initial data are monotone increasing (decreasing) in the distances with $y_i(0)\leq\bar L$  ($y_i(0)\geq\bar L$) for $i\in B$.
\end{enumerate}
Then, the stabilization result of the Theorems \ref{thm:microconstant} and \ref{thm:microconcave} hold for a  Lyapunov function  defined as in \eqref{eq:microLyapunov}, where the summation spans the set $B$.
\end{corollary}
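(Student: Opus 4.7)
The plan is to mirror the proofs of Theorems~\ref{thm:microconstant} and~\ref{thm:microconcave}, with the sum in the Lyapunov function running over the prescribed subset $B \subset \BB$ rather than over $\{J,\dots,N-1\}$. The role of the hypothesis \eqref{eq:sufficient} in those theorems was only to certify that $\BB \neq \emptyset$ and that a concrete index $\mathcal{J}$ exists; here that is promoted to a direct assumption, so the analysis can begin immediately from any $i \in B \subset \BB$, which guarantees that car $i$ always lies within distance $\ndt$ of the leading vehicle.

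First I would re-derive the maximum principle \eqref{eq:maxmicro} for each $i \in B$. In case a), the key observation is that \eqref{eq:dyconstant} shows $y_i(t) - \bar L_i$ satisfies a scalar ODE that \emph{decouples} from the other cars; hence the backward induction from $N-1$ used in Lemma~\ref{lem:max} is not even needed, and for each individual $i \in B$ the exponential representation from the proof of Lemma~\ref{lem:max} applies directly to give $y_i(t) \in [\min\{y_i(0), \bar L_i\}, \max\{y_i(0), \bar L_i\}]$. In case b), I would repeat the three-step argument of Lemma~\ref{lem:maxappendix} on the set $B$: upper bound, monotonicity preservation, and lower bound. This uses $\lm_i = \lm$ constant on $B$ together with the prescribed monotonicity and ordering of $\{y_i(0)\}_{i \in B}$ relative to $\bar L$, which are precisely the hypotheses listed.

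Once the maximum principle is in hand, I would define
\[
L(t) := \sum_{i \in B} y_i(t)\left(\frac{1}{N y_i(t)} - \frac{1}{N \bar L_i}\right)^2,
\]
differentiate term by term, and substitute either \eqref{eq:dyconstant} (case a) or \eqref{eq:dygeneral} (case b) as done in the previous proofs. The mean value theorem, the maximum principle, and the set-adapted constant $\rho_{\min} := \min_{i \in B} \min\{1/(N y_i(0)), 1/(N \bar L_i)\}$ yield $\frac{d}{dt} L(t) \leq r(t)\, L(t)$ with $r = (2/\ndt)\,v'_{\max}\,\rho_{\min}$ in case a) and $r(t) = 2\,\wt(x_N(t) - x_{\min B}(t))\,v'_{\max}\,\rho_{\min}$ in case b) (using that $\wt$ is non-increasing and that $\min B$ is the index farthest from the leader within $B$). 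Gronwall's inequality then closes the argument.

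The main obstacle is the concave-kernel case, where the monotonicity-preservation step from Lemma~\ref{lem:maxappendix} must be carried through on a possibly non-contiguous subset $B$. The essential inequality $\frac{d}{dt} y_i(t) \geq y_i(t)\,\wt(\zeta_i^N(t))\,(\bar v - v(1/(N y_i(t))))$ used in the proof of Theorem~\ref{thm:microconcave} depends on comparing $y_i$ with the distances of cars downstream of it, and one must verify that the order assumed on $B$ is enough to preserve the sign needed in that comparison. Once this is checked, the remainder of the argument is a direct transcription of the previous proofs with the summation index changed to $B$.
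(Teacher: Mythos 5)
Your proposal is correct and follows exactly the route the paper intends: the corollary is stated without proof precisely because, once $\BB\neq\emptyset$ is assumed directly, the arguments of Theorems \ref{thm:microconstant} and \ref{thm:microconcave} (maximum principle, mean value theorem, Gr\"onwall) transfer verbatim with the summation index replaced by $B$, and your observation that in case a) the ODE \eqref{eq:dyconstant} decouples car by car, so no induction is needed and $B$ may be arbitrary, is accurate. The caveat you raise for case b) is genuine and worth resolving rather than deferring: since \eqref{eq:dygeneral} couples $\frac{d}{dt}y_i$ to \emph{all} cars $i+1,\dots,N-1$ within the horizon, the upper bound, monotonicity preservation, and lower bound of Lemma \ref{lem:maxappendix} require control of every car downstream of $\min B$, so in the concave case $B$ must effectively be a contiguous tail $\{i_0,\dots,N-1\}$ (equivalently, the monotonicity and ordering hypotheses must be read as holding for all indices between $\min B$ and $N-1$), which is how the corollary's hypotheses are meant to be understood.
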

\begin{remark}
Note that this includes the results from the Theorems \ref{thm:microconstant} and \ref{thm:microconcave}.
%In particular, by picking the right subsets it is possible to compare the stabilization effect for different values of $\ndt$.
\end{remark}
}

\subsubsection{The macroscopic level}

In the following we consider the macroscopic system \eqref{eq:macrosystem} and obtain similar results as for the microscopic one.
To control and compare the two scales, we need appropriate boundary conditions for system \eqref{eq:macrosystem}, such that the system behaves similarly to the microscopic one.
In particular, in the microscopic case the cars are initially placed on some interval, we have a dynamic for the leading vehicle and no car is entering or leaving the road over time.
We mimic this on the macroscopic level.\\
Consider an initial interval $[a,b]$ with $b>a+\ndt$.
At the left boundary $a$ we prescribe zero inflow conditions such that no cars enter the road as in the microscopic case, i.e.,
\begin{equation*}
(\rho(t,a),\lm(t,a))=(0,\lm_0(a)).
\end{equation*}
As aforementioned we need a corresponding formulation to the leading vehicle at the right boundary.
This vehicle moves with speed $\bar v$ and hence the right boundary should move in time with velocity $\bar v$. 
Therefore, the right boundary is described by $\beta(t)=b+t\bar v$.
Due to the nonlocality of the flux we also need to prescribe the density and Lagrangian marker for the area $[\beta(t),\beta(t)+\ndt]$.
In the microscopic case, no cars are present ahead of the leader such that only the velocity of the leader is taken into account.
To achieve this, we choose constant boundary conditions in such a way that the Lagrangian marker for $y\in[\beta(t),\beta(t)+\ndt]$ is determined by the initial condition at the right boundary, i.e. $\lm(t,y)=\lm_0(b)$.
Then, the equilibrium velocity $\bar v$ together with the Lagrangian marker gives us the equilibrium density for $\rho(t,y)$ and  $y\in[\beta(t),\beta(t)+\ndt]$.
We will denote this density by $\bar \rho_b$.
To sum up, initial values $(\rho_0,\lm_0)$ fulfilling \eqref{eq:initcond} on the interval $[a,b]$ and the boundary conditions
\begin{equation}\label{eq:boundary}
    \begin{cases}
   (\rho(t,a),\lm(t,a))=(0,\lm_0(a))\\
   (\rho(t,y),\lm(t,y))=(\bar \rho_b,\lm_0(b))\quad y\geq \beta(t)=b+t \bar v,
    \end{cases}
\end{equation}
where $\bar \rho_b$ is determined such that $v(\bar \rho_b, \lm_0(b))=\bar v$, are given.
\begin{remark}
Note that we do not study system \eqref{eq:macrosystem} with boundary conditions \eqref{eq:boundary} for its well-posedness.
Nevertheless, the problem \eqref{eq:macrosystem} and \eqref{eq:boundary} on $[a,\beta(t)]$ can also be viewed as an initial value problem on $\R$ by considering the modified initial conditions
\begin{equation}\label{eq:modinitcond}
\tilde \rho_0(x)=\begin{cases}
0,\qquad & \text{ if }x<a,\\
\rho_0(x),\qquad & \text{ if }x\in[a,b],\\
\bar \rho_b,\qquad & \text{ if }x>b
\end{cases},\qquad
\tilde \lm_0(x)=\begin{cases}
\lm_0(a),\qquad & \text{ if }x<a,\\
\lm_0(x),\qquad & \text{ if }x\in[a,b],\\
\lm_0(b),\qquad & \text{ if }x>b.
\end{cases}
\end{equation}
The waves induced by this initial value problem create the same waves as the boundary conditions \eqref{eq:boundary}.
Well-posedness for similar systems as \eqref{eq:modinitcond} is analyzed in \cite{friedrich2020micromacro}.
\end{remark}
After having defined the boundary conditions, which provide comparable results to the microscopic system, we also derive the equilibrium density defined by $\bar \rho(t,x)$.
In general, the equilibrium density $\bar \rho(t,x)$ at $(t,x)$ can be computed, for a given $\lm(t,x)$, by the relationship $v(\bar \rho(t,x),\lm(t,x))=\bar v$.
Furthermore, we observe that $\lm(t,x)$ fulfills a transport equation, see \eqref{eq:macrosystem}.
Hence, $\bar \rho$ satisfies the same partial differential equation, i.e.
\[\partial_t \bar \rho(t,x)+V(t,x) \partial_x \bar \rho(t,x)=0,\]
with initial conditions $\bar \rho_0$, which satisfy $v(\bar \rho_0(x),\lm_0(x))=\bar v$.
This alternative formulation will be useful later on.\\
Now we can turn to an appropriate Lyapunov function and our main result of this section:
\begin{theorem}\label{thm:macroLyapunovDensity}
Let $\rho\in C^1(\R^+;H^2(\R))$ and \rv{Assumption \ref{ass:vprime} hold.
Further, we assume either
\begin{enumerate}[label=\alph*)]
    \item a constant kernel function, or 
    \item a concave kernel function, where for $x\in [b-\ndt,b]$ the initial data shall satisfy that $\lm_0(x)=\lm$, $\rho_0(x)\geq \bar \rho$ ($\rho_0(x)\leq \bar \rho$), $\rho_0$ monotone decreasing (increasing) on $[b-\ndt,b]$.
\end{enumerate}
} We define
\begin{equation}\label{eq:macroLyapunov}
    L(t):=\int_{\alpha(t)}^{\beta(t)}(\rho(t,x)-\bar \rho(t,x))^2 dx,
\end{equation}
with
\begin{equation}
    \label{eq:x0t}
    \alpha(t):=\sup \{x\geq \beta(t)-\ndt : \int_x^{\beta(t)}\rv{\rho(t,y)dy}\geq c_\rho \},
\end{equation}
where $c_\rho:=\min_{t\geq 0} \int_{\beta(t)-\ndt}^{\beta(t)}\rho(t,x)dx$.
Then, we obtain the following bound
\begin{align*}
    L(t)\leq L(0) \exp\left(2\, v'_{\max}\, \rho_{\min} \rv{\int_0^t\wt(\beta(s)-\alpha(s))ds}\right),
\end{align*}
where $\rho_{\min}:=\inf_{x\in [\alpha(0),b]} \min\{\rho_0(x), \bar \rho(0,x)\}$.
\end{theorem}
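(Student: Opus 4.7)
The plan is to mirror the microscopic argument from Theorems \ref{thm:microconstant} and \ref{thm:microconcave}, exploiting the PDE structure in \eqref{eq:macrosystem}. First, I would set $u(t,x):=\rho(t,x)-\bar\rho(t,x)$ and note that subtracting the two transport equations gives
\[
\partial_t u + V(t,x)\,\partial_x u + \rho\,\partial_x V(t,x) = 0,
\]
since both $\rho$ and $\bar\rho$ are transported by the nonlocal velocity $V$, while only $\rho$ carries an additional $\rho\,\partial_x V$ compression term. Multiplying by $2u$ and rewriting yields
\[
\partial_t u^2 + \partial_x(V u^2) = (u^2 - 2\rho u)\,\partial_x V = -u(\rho+\bar\rho)\,\partial_x V.
\]

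Next I would differentiate $L(t)$ with Leibniz' rule and check that the moving-boundary contributions collapse. At $x=\beta(t)$ the boundary conditions \eqref{eq:boundary} force $\rho=\bar\rho_b=\bar\rho(t,\beta(t))$, and moreover $V(t,\beta(t))=\bar v=\dot\beta(t)$ since the kernel only sees the equilibrium region ahead; hence both the Leibniz term and the flux term vanish at $\beta$. At $x=\alpha(t)$ the key observation is that the constraint $\int_{\alpha(t)}^{\beta(t)}\rho\,dy = c_\rho$ together with mass conservation forces $\dot\alpha(t)=V(t,\alpha(t))$, so the Leibniz term again cancels the flux contribution $V u^2\big|_{\alpha(t)}$. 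This leaves
\[
\frac{d}{dt}L(t) = -\int_{\alpha(t)}^{\beta(t)} u\,(\rho+\bar\rho)\,\partial_x V\,dx.
\]

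The crucial step is to turn $\partial_x V$ into a negative multiple of $u$. For the constant kernel $\wt\equiv 1/\ndt$ I would differentiate \eqref{eq:NV:conv} to get $\partial_x V(t,x)=\tfrac{1}{\ndt}\bigl[v(\rho(t,x+\ndt),\lm(t,x+\ndt))-v(\rho(t,x),\lm(t,x))\bigr]$; since $\alpha(t)\geq\beta(t)-\ndt$, the first term equals $\bar v = v(\bar\rho(t,x),\lm(t,x))$, and the mean value theorem together with Assumption \ref{ass:vprime} yields $\partial_x V = -\tfrac{1}{\ndt}\,\partial_\rho v(\xi,\lm)\,u$. Plugging in and using $\rho+\bar\rho\geq 2\rho_{\min}$ gives exactly
\[
\frac{d}{dt}L(t) \leq \frac{2\,v'_{\max}\,\rho_{\min}}{\ndt}\,L(t) = 2\,v'_{\max}\,\rho_{\min}\,\wt(\beta(t)-\alpha(t))\,L(t),
\]
after which Grönwall closes the argument. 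For the concave kernel case I would argue in parallel to Theorem \ref{thm:microconcave}: first propagate the monotonicity and bounds on $\rho_0$ to $\rho(t,\cdot)$ on $[\alpha(t),\beta(t)]$, then rewrite $\partial_x V$ via integration by parts in the convolution so that the boundary term $\wt(\beta(t)-\alpha(t))\bigl(v(\rho(t,x))-\bar v\bigr)$ dominates, and apply the mean value theorem to obtain the same sign structure.

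The main obstacle is the bookkeeping at $x=\alpha(t)$: one must justify that the infimum in the definition of $c_\rho$ is achieved on a set where $\alpha(t)$ is differentiable and that $\dot\alpha(t)=V(t,\alpha(t))$, so that the transport boundary contribution truly cancels. A secondary difficulty is the concave case, where $\partial_x V$ is not a pure finite difference and one has to invoke monotonicity preservation on $[\alpha(t),\beta(t)]$ together with the concavity of $\wt$ to bound the convolution derivative by $\wt(\beta(t)-\alpha(t))\,(v(\rho)-\bar v)$ with the correct sign.
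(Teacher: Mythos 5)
Your proposal follows essentially the same route as the paper's proof: the identity $\partial_t u^2+\partial_x(Vu^2)=-u(\rho+\bar\rho)\,\partial_x V$ is just the conservation-form packaging of the paper's Leibniz-plus-integration-by-parts computation, the boundary cancellations at $\beta(t)$ (via $\rho=\bar\rho_b$) and at $\alpha(t)$ (via $\dot\alpha=V(t,\alpha)$, the paper's Lemma \ref{lem:x0}) are identical, and the treatment of $\partial_x V$ — the finite-difference/mean-value argument for the constant kernel and the monotonicity-preservation estimate $\partial_x V\geq \wt(\beta(t)-x)(\bar v-v(\rho))$ for the concave one, with the maximum principle inherited from the microscopic semi-discretization supplying $\rho\geq\rho_{\min}$ — matches the paper step for step. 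The argument is correct and no substantive deviation is present.
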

Before turning to the proof of \rv{Theorem \ref{thm:macroLyapunovDensity}} we want to discuss the lower boundary $\alpha(t)$ given by \eqref{eq:x0t} in the Lyapunov function \eqref{eq:macroLyapunov} in more detail.
To this end we consider the Lyapunov function \eqref{eq:microLyapunov} of \rv{the Theorems \ref{thm:microconstant} and \ref{thm:microconcave}}.
Here, the number of cars is constant.
Hence, similar to the boundary data, a macroscopic equivalent is needed.
This yields a constant mass, i.e. $\int_{\beta(t)-\ndt}^{\beta(t)}\rho(t,x)dx=c$ with $c>0$. 
However, the mass in the interval $[\beta(t)-\ndt,\beta(t)]$ is changing over time and therefore we need to consider a boundary which yields a subinterval $[\alpha(t),\beta(t)]\subset [\beta(t)-\ndt,\beta(t)]$ such that its mass is $c_\rho$.
This \rv{is} exactly achieved by the boundary $\alpha(t)$ as defined in \eqref{eq:x0t}.\\
We can also reformulate the boundary in an intuitive way as seen in the following lemma:
\begin{lemma}\label{lem:x0}
Let $\alpha(t)$ be defined by \eqref{eq:x0t} and assume $\alpha\in C^1(\R^+)$. Then, it is given by the solution of the ordinary differential equation
\begin{equation}\label{eq:x0diff}
    \frac{d}{dt}\alpha(t)=V(t,\alpha(t)),
\end{equation}
with initial condition 
\[ \alpha(0):=\sup \{x\geq b-\ndt : \int_x^{b}\rv{\rho_0(y)dy}\geq c_\rho \}.\]
\end{lemma}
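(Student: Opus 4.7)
The plan is to start from the defining integral identity for $\alpha(t)$ and differentiate it in time, using the conservation law satisfied by $\rho$ to simplify the result. Under the assumed regularity $\alpha\in C^1(\R^+)$ and the fact that the map $x\mapsto\int_x^{\beta(t)}\rho(t,y)dy$ is continuous and nonincreasing in $x$, the supremum in \eqref{eq:x0t} is actually attained with equality, so
\begin{equation*}
\int_{\alpha(t)}^{\beta(t)}\rho(t,y)\,dy = c_\rho \qquad \text{for all } t\geq 0.
\end{equation*}
The initial condition for $\alpha$ is then immediate by setting $t=0$ and recalling $\beta(0)=b$.

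Next I would differentiate this identity. By Leibniz's rule and the $\rho$-equation in \eqref{eq:macrosystem},
\begin{equation*}
0 = \rho(t,\beta(t))\,\dot\beta(t) - \rho(t,\alpha(t))\,\dot\alpha(t) + \int_{\alpha(t)}^{\beta(t)} \partial_t\rho(t,y)\,dy,
\end{equation*}
and $\partial_t\rho = -\partial_x(\rho V)$ turns the remaining integral into the boundary difference $-\rho(t,\beta(t))V(t,\beta(t)) + \rho(t,\alpha(t))V(t,\alpha(t))$. Since $\dot\beta(t)=\bar v$ by definition of $\beta$, I would show that the two boundary terms at $\beta(t)$ cancel. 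This requires $V(t,\beta(t))=\bar v$, which follows from the boundary conditions \eqref{eq:boundary}: on $[\beta(t),\beta(t)+\ndt]$ we have $\rho\equiv\bar\rho_b$ and $\lm\equiv\lm_0(b)$, hence $v(\rho,\lm)\equiv \bar v$ there, and the convolution weight integrates to one. What remains is
\begin{equation*}
0 = -\rho(t,\alpha(t))\,\dot\alpha(t) + \rho(t,\alpha(t))\,V(t,\alpha(t)).
\end{equation*}

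To conclude I would verify that $\rho(t,\alpha(t))>0$ so that this equation can be divided to yield \eqref{eq:x0diff}. This positivity is a consequence of the choice of $c_\rho>0$: since the mass in $[\alpha(t),\beta(t)]$ equals the strictly positive constant $c_\rho$ while $\alpha(t)\in[\beta(t)-\ndt,\beta(t))$, the density cannot vanish identically near $\alpha(t)$, and the supremum characterization in \eqref{eq:x0t} together with continuity of $\rho$ forces $\rho(t,\alpha(t))>0$; otherwise $\alpha(t)$ could be pushed slightly to the left without violating the mass constraint, contradicting the supremum.

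The main obstacle I expect is this last positivity argument, and the closely related matter of justifying that $\alpha$ is indeed the unique point where the nonincreasing function of $x$ attains $c_\rho$ (rather than a whole interval on which $\rho\equiv 0$). For the stated $C^1$ regularity of $\alpha$ this is essentially guaranteed, and the rest of the proof is a clean application of Leibniz's rule, the $\rho$-conservation law, and the structural identity $V(t,\beta(t))=\bar v$ coming from the right boundary data.
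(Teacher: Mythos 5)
Your proposal follows the paper's proof essentially verbatim: both establish the mass identity $\int_{\alpha(t)}^{\beta(t)}\rho\,dy=c_\rho$, differentiate it via Leibniz's rule and the conservation law so the $\beta$-boundary terms cancel (using $V(t,\beta(t))=\bar v$ from the boundary data), and conclude by dividing through after noting that the supremum characterization forces $\rho(t,\alpha(t))>0$. Your justifications of the boundary cancellation and of the positivity are somewhat more explicit than the paper's, but the argument is the same.
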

\begin{proof}
First note that due to the continuity of the integral we have $\int_x^{\beta(t)}\rv{\rho(t,y)dy}= c_\rho$ for all $t\geq 0$ \rv{and $x\geq \beta(t)-\ndt$}. Hence, we obtain
\[ \int_{\alpha(t)}^{\beta(t)}\rho(t,x)dx= c_\rho \quad\text{and}\quad \frac{d}{dt}\int_{\alpha(t)}^{\beta(t)}\rho(t,x)dx= 0.\]
On the other hand we have 
\begin{align*}
    \frac{d}{dt}\int_{\alpha(t)}^{\beta(t)}\rho(t,x)dx=& \frac{d}{dt} \beta(t)\rho(t,\beta(t))-\frac{d}{dt} \alpha(t) \rho(t,\alpha(t)) -\int_{\alpha(t)}^{\beta(t)}\partial_x \left(V(t,x) \rho(t,x)\right)dx\\
    =& \bar v\,\rho(t,\beta(t))-\frac{d}{dt} \alpha(t) \rho(t,\alpha(t)) -\bar v\rho(t,\beta(t))+V(t,\alpha(t)) \rho(t,\alpha(t))\\
    =& \left(V(t,\alpha(t))-\frac{d}{dt} \alpha(t)\right) \rho(t,\alpha(t)).
\end{align*}
Since $\alpha(t)$ is defined as the supremum the density $\rho(t,\alpha(t))$ cannot be zero. Hence, the claim follows.
\end{proof}
The meaning of the latter lemma is that $\alpha(t)$ moves with the nonlocal speed $V(t,\alpha(t))$.
In contrast to that, the left boundary $\beta(t)$ moves with the speed $\bar v$ (as the leading vehicle $x_N(t)$ in the microscopic case).
Note that even with this reformulation we need the knowledge of the solution of $\rho$ for the whole time horizon to determine $c_\rho$.
We will comment on specific cases which allow to simplify this assumption.
\begin{proof}[Proof of \rv{Theorem \ref{thm:macroLyapunovDensity}}]
First note that the boundary conditions simplify the nonlocal term $\eqref{eq:NV:conv}$ and its spatial derivative for $x\in [\beta(t)-\ndt,\beta(t)]$.
Here, we have
\rv{
\begin{align*}
    V(t,x)&=\int_x^{\beta(t)}\wt(y-x) (v(\rho(t,y),\lm(t,y))-\bar v)\dy,\\
    \partial_x V(t,x)&= -\int_x^{\beta(t)}\wt'(y-x) (v(\rho(t,y),\lm(t,y))-\bar v)\dy+\wt(0)\left(\bar v- v(\rho(t,x),\lm(t,x)) \right).
\end{align*}}
This derivative will play a key role for proving our stabilization results.\\
Turning now to the Lyapunov function \eqref{eq:macroLyapunov}, we directly obtain 
\begin{align*}
    \frac{d}{dt}L(t)=&\bar v\, (\rho(t,\beta(t))-\bar \rho(t,\beta(t)))^2 - \frac{d}{dt}\alpha(t)(\rho(t,\alpha(t))-\bar \rho(t,\alpha(t)))^2\\
    &+2\int_{\alpha(t)}^{\beta(t)}(\rho(t,x)-\bar \rho(t,x))\frac{d}{dt}(\rho(t,x)-\bar \rho(t,x)) dx.
\end{align*}
Due to the boundary conditions, we have $\rho(t,\beta(t))=\bar \rho(t,\beta(t))$.
This leaves us with
\begin{align*}
    \frac{d}{dt}L(t)=&- \frac{d}{dt}\alpha(t)(\rho(t,\alpha(t))-\bar \rho(t,\alpha(t)))^2\\
    &+2\int_{\alpha(t)}^{\beta(t)}(\rho(t,x)-\bar \rho(t,x))(\partial_t \rho(t,x)-\partial_t\bar \rho(t,x)) dx.
\end{align*}
As $\partial_t \rho(t,x)=-\partial_x (\rho(t,x)V(t,x))$ and $\partial_t \bar \rho(t,x)=-V(t,x)\partial_x \bar \rho(t,x)$, we have
\begin{align*}
    \frac{d}{dt}L(t)=&- \frac{d}{dt}\alpha(t)(\rho(t,\alpha(t))-\bar \rho(t,\alpha(t)))^2\\
    &+2\int_{\alpha(t)}^{\beta(t)}(\rho(t,x)-\bar \rho(t,x))(-V(t,x)) (\partial_x \rho(t,x)-\partial_x \bar\rho(t,x)) dx\\
    &-2\int_{\alpha(t)}^{\beta(t)}(\rho(t,x)-\bar \rho(t,x))\rho(t,x)\partial_x V(t,x) dx.
\end{align*}
For the middle term a partial integration using the boundary conditions yields
\begin{align*}
    \int_{\alpha(t)}^{\beta(t)}-V(t,x) \partial_x (\rho(t,x)-\bar \rho(t,x))^2 dx=&V(t,\alpha(t)) (\rho(t,\alpha(t))-\bar \rho(t,\alpha(t)))^2\\
    &+\int_{\alpha(t)}^{\beta(t)}\partial_x V(t,x)  (\rho(t,x)-\bar \rho(t,x))^2 dx.
  %  =&(V(t,\alpha(t))-\bar v) (\rho(t,\alpha(t))-\bar \rho(t,\alpha(t)))^2+\frac1\ndt\int_{\alpha(t)}^{\beta(t)}\partial_\rho v(\xi(t,x),\lm(t,x))(\bar \rho(t,x)-\rho(t,x))  (\rho(t,x)-\bar \rho(t,x))^2 dx
\end{align*}
\rv{Now, putting everything together, we obtain}
\begin{align}
   \nonumber \frac{d}{dt}L(t)=& \left(V(t,\alpha(t))- \frac{d}{dt}\alpha(t)\right) (\rho(t,\alpha(t))-\bar \rho(t,\alpha(t)))^2 \\
  \label{eq:Ltstart}  &+\int_{\alpha(t)}^{\beta(t)}\partial_x V(t,x)(\bar \rho(t,x)+\rho(t,x))  (\bar\rho(t,x)- \rho(t,x)) dx
\end{align}
 \rv{Due to \eqref{eq:x0diff} the first term is equal to zero. Now consider the cases a) and b) separately.\\
 For a constant kernel the derivative of the nonlocal term simplifies to
 \[\partial_x V(t,x)=\frac{1}{\ndt}\left(\bar v-v(\rho(t,x),\lm(t,x))\right).\]
Hence, we apply the mean value theorem in the first argument of $v$, recall $\bar v=v(\bar \rho(t,x),\lm(t,x))$, and obtain}
\begin{align*}
    \frac{d}{dt}L(t)=& \frac1\ndt\int_{\alpha(t)}^{\beta(t)}\partial_\rho v(\xi(t,x),\lm(t,x)))(\bar \rho(t,x)+\rho(t,x))  (\rho(t,x)-\bar \rho(t,x))^2 dx.
\end{align*}
\rv{This term will be estimated from above.}
For the microscopic system we have the maximum principle \eqref{eq:maxmicro}.
As shown in \cite{friedrich2020micromacro}, the microscopic system can be seen as a semi--discretization of the macroscopic system which converges for $N\to \infty$ towards the macroscopic solution.
Hence, \eqref{eq:maxmicro} gives us a maximum principle for the macroscopic system in Lagrange coordinates, too. This translates to $\rho(t,x)\geq \rho_{\min}$ for $x\geq \alpha(t)$.
Now, we obtain 
\begin{align*}
    \frac{d}{dt}L(t)\leq \frac{2\rho_{\min}\,v_{\max}'}{\ndt}\int_{\alpha(t)}^{\beta(t)}  (\rho(t,x)-\bar \rho(t,x))^2 dx.
\end{align*}
Applying Gr\"onwall's inequality yields the result for the case a).\\
\rv{
In case b) we obtain that also the monotonicity of the solution and $\rho(t,x)\geq \bar \rho$ as well as $\rho(t,x)\leq \bar \rho$ respectively are kept for $x\geq \alpha(t)$.
Now, we consider the case of a monotone decreasing initial density and $\rho_0(x)\geq \bar \rho$.
For simplicity and due to $\lm(t,x)=\lm$ for $x\in[\alpha(t),\beta(t)]$, we drop the dependence of $v$ on $\lm $ in the following.
We rewrite the derivative of the nonlocal term as
\begin{align*}
    \partial_x V(t,x)=& -\int_x^{\beta(t)}\wt'(y-x) (v(\rho(t,y))-v(\rho(t,x))\dy\\&+\left(\wt(0)+\int_x^{\beta(t)}\wt'(y-x) \dy\right)\left(\bar v- v(\rho(t,x)) \right).
    \intertext{Since the monotonicity is preserved, we estimate}
    \partial_x V(t,x)\geq&\,\wt(\beta(t)-x) \left(\bar v- v(\rho(t,x)) \right).
\end{align*}
Further, due to $\rho(t,x)\geq \bar \rho$ we are able to bound \eqref{eq:Ltstart} by
\begin{align*}
    \eqref{eq:Ltstart}&\leq \int_{\alpha(t)}^{\beta(t)}\partial_\rho v(\xi(t,x),\lm(t,x)))\,\wt(\beta(t)-x) (\bar \rho(t,x)+\rho(t,x))  (\rho(t,x)-\bar \rho(t,x))^2 dx\\
    &\leq \wt(\beta(t)-\alpha(t))\,\rho_{\min}\,v_{\max}'\int_{\alpha(t)}^{\beta(t)}  (\rho(t,x)-\bar \rho(t,x))^2 dx.
\end{align*}
For monotone increasing initial data the signs change such that we get the same estimate.
Hence, Grönwall's inequality yields the result for the case b), too.
}
\end{proof}
The exponential rates of the microscopic and the macroscopic Lyapunov function are equal  except for the computation of the densities. 
\begin{remark}[Micro-to-macro convergence] At least formally, the fact that the corresponding Lyapunov function has the same  rate is not  surprising, as for $N\to\infty$ the microscopic Lyapunov function converges to the macroscopic \rv{one}.
Note that in the macroscopic problem, we do not restrict \rv{the initial conditions}.
We discuss formally the limit of \eqref{eq:microLyapunov} in more detail.
Define the piecewise constant function
\begin{equation}\label{eq:microdensity}
\rho^N(t,x):=\sum_{i=0}^{N-1} \frac{1}{Ny_i(t)}\caratt{[x_i(t),x_{i+1}(t))}(x),
\end{equation}
where $\caratt{A}(x)$ is the characteristic function being one for $x\in A$ and zero elsewhere.
In \cite{friedrich2020micromacro}, it is shown that \eqref{eq:microdensity} converges for $N\to\infty$ to a limit function $\rho$ which is actually a weak solution to system \eqref{eq:macrosystem}.
Similarly, the microscopic equilibrium density $1/(N\bar L_i)$ converges to the macroscopic quantity $\bar \rho$.
Furthermore, we rewrite the microscopic Lyapunov function using \eqref{eq:microdensity} (and $\bar \rho^N(t,x)$ defined accordingly):
\begin{align*}
\eqref{eq:microLyapunov}&=\sum_{i=\mathcal{J}}^{N-1}\int_{x_i(t)}^{x_{i+1}(t)}\left( \frac{1}{N y_i(t)}-\frac{1}{N \bar L_i}\right)^2dx
%=\sum_{i=\mathcal{J}}^{N-1}\int_{x_i(t)}^{x_{i+1}(t)}\left(\rho^N(t,x)-\bar\rho^N(t,x)\right)^2dx\\
%&
=\int_{x_{\mathcal{J}}(t)}^{x_{N}(t)}\left(\rho^N(t,x)-\bar\rho^N(t,x)\right)^2dx.
\end{align*}
By the choice of the boundary conditions, the correspondence from $x_N(t)$ to the left boundary $\beta(t)$ is obtained.
For the other boundary, observe that 
\begin{equation}
\label{eq:microdensityconstant}
\frac{d}{dt}\int_{x_{\mathcal{J}}(t)}^{x_N(t)} \rho^N(t,x)dx=\frac{d}{dt}\sum_{i=\mathcal{J}}^{N-1}y_i(t)\frac{1}{Ny_i(t)}=0.
\end{equation} 
Hence, the integral of the microscopic density between two specific cars stays constant over time.
Furthermore, we define
\begin{align*}
c_\rho^N:&=\min_{t\geq 0} \int_{x_{N}(t)-\ndt}^{x_N(t)} \rho^N(t,x)dx
=\int_{x_{N}(t^*)-\ndt}^{x_{\mathcal{J}}(t^*)} \rho^N(t,x)dx+\int_{x_{\mathcal{J}}(t^*)}^{x_N(t^*)} \rho^N(t,x)dx,
\end{align*}
where $t^*$ is the time when the car ${\mathcal{J}}$ is the closest to the border $x_N(t)-\ndt$.
If we now pass to the limit $N\to\infty$, the car ${\mathcal{J}}$ reaches  position $x_N(t)-\ndt$, such that the first term disappears.
Using \eqref{eq:microdensityconstant} we observe
\[c_\rho^N\to  \int_{x_{\mathcal{J}}(t)}^{x_N(t)} \rho^N(t,x)dx\]
for $N\to\infty$ and further by construction also $c_\rho^N\to c_\rho$.
Hence, by comparing  to \eqref{eq:x0t}, we have that $x_{\mathcal{J}}(t)$ converges towards the boundary $\alpha(t)$.
\rv{This demonstrates that the rate in the microscopic case for a concave kernel converges to its macroscopic equivalent as}
\[\rv{\int_0^t \wt(x_N(s)-x_{\mathcal{J}}(s))ds\to \int_0^t\wt(\beta(s)-\alpha(s))ds.}\]
\end{remark}
The value $c_\rho$ depends on the knowledge of the whole solution over time.
Hence, the theoretical boundary $\alpha(t)$ may not be useful in real-world applications.
Nevertheless, in specific cases we \rv{can determine} its value a prior: \\
Consider a case where the density in the interval $[\beta(t)-\ndt, \beta(t)]$ is only increasing over time.
\begin{lemma}\label{lem:rhoL1minsimple}
Let the initial conditions for the Lagrangian marker be $\lm_0(x)=\lm$\rv{, hence $v(\rho,\lm)=:v(\rho)$ and the initial density either satisfies
\begin{enumerate}[label=\alph*)]
    \item  $\rho_0(x)\leq \bar \rho\ \forall x \in[a,b]$, or   
    \item the condition 
\begin{equation}\label{eq:initrhoL1simple}
V(0,b-\ndt)\geq \bar v
\end{equation}
for a constant kernel and a linear velocity function.
\end{enumerate}
}
\noindent Then,
\[c_\rho=\int_{b-\ndt}^{b} \rho_0(x) dx\]
holds.
\end{lemma}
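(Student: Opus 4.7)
The plan is to reduce the claim to a single monotonicity statement: if
\[
M(t) := \int_{\beta(t)-\ndt}^{\beta(t)} \rho(t,x)\,dx
\]
is non-decreasing in $t$, then the minimum defining $c_\rho$ is attained at $t=0$, giving exactly $c_\rho = \int_{b-\ndt}^{b}\rho_0(x)\,dx$.

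The first step is to differentiate $M$ by Leibniz' rule together with the conservation law $\partial_t\rho=-\partial_x(\rho V)$. At $x=\beta(t)$ the convolution sees only the constant boundary state prescribed in \eqref{eq:boundary}, which forces $V(t,\beta(t))=\bar v$; consequently the boundary contributions at the right endpoint cancel and one is left with the clean identity
\[
M'(t) \;=\; \rho(t,\beta(t)-\ndt)\,\bigl[V(t,\beta(t)-\ndt)-\bar v\bigr].
\]
It therefore suffices to show that $V(t,\beta(t)-\ndt)\geq\bar v$ for all $t\geq 0$.

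For case (a), the key tool is the monotonicity of $v$ in $\rho$: invoking the macroscopic counterpart of the maximum principle \eqref{eq:maxmicro}, obtained via the $N\to\infty$ convergence of \cite{friedrich2020micromacro}, the initial bound $\rho_0\leq\bar\rho$ propagates in time, so $v(\rho(t,y),\lm)\geq v(\bar\rho,\lm)=\bar v$ for every $y\in[\beta(t)-\ndt,\beta(t)]$. Integrating against the unit-mass kernel $\wt$ yields $V(t,\beta(t)-\ndt)\geq\bar v$ directly.

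For case (b), the constant kernel $\wt\equiv 1/\ndt$ together with the linear ansatz $v(\rho)=\lm(1-\rho/\rho^{\max}_\lm)$ gives the explicit affine identity
\[
V(t,\beta(t)-\ndt)-\bar v \;=\; \frac{\lm}{\rho^{\max}_\lm\,\ndt}\bigl(\bar\rho\,\ndt - M(t)\bigr).
\]
Substituting this into the formula for $M'(t)$ and setting $\phi(t):=\bar\rho\,\ndt-M(t)$ produces the scalar linear ODE $\phi'(t)=-\frac{\lm}{\rho^{\max}_\lm\ndt}\,\rho(t,\beta(t)-\ndt)\,\phi(t)$, which preserves the sign of $\phi$. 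Since the hypothesis $V(0,b-\ndt)\geq\bar v$ is precisely $\phi(0)\geq 0$, we obtain $\phi(t)\geq 0$ and hence $V(t,\beta(t)-\ndt)\geq\bar v$ for all $t\geq 0$, closing both cases. The main obstacle is the rigorous transfer of the microscopic maximum principle to the macroscopic regime in case (a); case (b) is clean once one spots that the linearity of $v$ and the constancy of $\wt$ collapse the problem to a scalar linear ODE whose coefficient has a definite sign.
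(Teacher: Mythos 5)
Your overall strategy coincides with the paper's: reduce the claim to showing that $M(t)=\int_{\beta(t)-\ndt}^{\beta(t)}\rho(t,x)\,dx$ is non-decreasing, compute $M'(t)=\rho(t,\beta(t)-\ndt)\bigl[V(t,\beta(t)-\ndt)-\bar v\bigr]$ exactly as in the proof of Lemma \ref{lem:x0}, and then establish $V(t,\beta(t)-\ndt)\geq\bar v$ separately in the two cases. Your case (b) is correct and is essentially the paper's argument in disguise: the paper differentiates $V(t,\beta(t)-\ndt)-\bar v$ in time and obtains a scalar linear ODE whose solution is a sign-preserving exponential; since for a constant kernel and linear $v$ the quantity $V(t,\beta(t)-\ndt)-\bar v$ is an affine function of $M(t)$, your ODE for $\phi(t)=\bar\rho\,\ndt-M(t)$ is the same equation up to this change of variables.

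There is, however, a gap in your case (a). You justify the propagated bound $\rho(t,\cdot)\leq\bar\rho$ by invoking ``the macroscopic counterpart of the maximum principle \eqref{eq:maxmicro}'' via the $N\to\infty$ limit. But \eqref{eq:maxmicro} is only established under the hypotheses of Lemma \ref{lem:max}: a constant kernel, or a concave kernel together with monotone initial data with a common Lagrangian marker, and even then only for the cars in the set $\BB$ influenced by the leader. Case (a) of the present lemma assumes none of this --- the kernel is an arbitrary one satisfying \eqref{eq:ass:kernel} and $\rho_0\leq\bar\rho$ need not be monotone --- so the tool you reach for is not available, and you cannot close the argument this way. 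The correct (and much cheaper) tool, which the paper uses, is the strong maximum principle for the first-order nonlocal model, \cite[Theorem~2.2]{friedrich2018godunov}: since $\lm_0\equiv\lm$ the system reduces to that first-order model, and that maximum principle holds for every kernel satisfying \eqref{eq:ass:kernel} and arbitrary $\BV$ data, giving $\rho(t,x)\leq\bar\rho$, hence $v(\rho(t,y))\geq\bar v$ pointwise and $V(t,\beta(t)-\ndt)\geq\bar v$ after integrating against the unit-mass kernel, exactly as you then conclude. With that substitution your proof of case (a) is complete.
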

\begin{proof}
Similar to the proof of Lemma \ref{lem:x0} we obtain
\begin{align*}
    \frac{d}{dt}\int_{\beta(t)-\ndt}^{\beta(t)}\rho(t,x)dx=& \left(V(t,\beta(t)-\ndt)-\bar v\right) \rho(t,\beta(t)-\ndt).
\end{align*}
To prove the claim, the derivative needs to be positive.
\rv{The density is already nonnegative.
In case a) we obtain the result as a direct consequence of the strong maximum principle in \cite[Theorem 2.2]{friedrich2018godunov}, which we can apply due to $\lm_0(x)=\lm$.
In particular, this maximum principle holds for all kernels as in \eqref{eq:ass:kernel}.
This gives us $\rho(t,x)\leq \bar \rho$ or equivalently $v(\rho(t,x))\geq \bar v$.
In particular, $V(t,\beta(t)-\ndt)\geq \bar v$ follows.\\
The second case is more involved.
First note that due to Assumption \ref{ass:vprime} and $v$ being linear in $\rho$ we can set $v'_{\max}:=v'(\rho)$.
Now, we study}
\begin{align*}
\frac{d}{dt}\left(V(t,\beta(t)-\ndt)-\bar v\right)&=\frac{1}{\ndt}\left(
\bar v \left[\bar v-v(\rho(t,\beta(t)-\ndt))\right]+v'_{\max}\int_{\beta(t)-\ndt}^{\beta(t)} \partial_t \rho(t,y) dy \right)\\
&=\frac{1}{\ndt}\left(
\bar v\, v'_{\max}\left[\bar \rho -\rho(t,\beta(t)-\ndt)\right]-v'_{\max}\int_{\beta(t)-\ndt}^{\beta(t)} \partial_x \left(\rho(t,y)V(t,y)\right) dy \right)\\
%&=\frac{1}{\ndt}\left(
%\bar v v'_{\max}\left[\bar \rho -\rho(t,\beta(t)-\ndt)\right]-v'_{\max}\left[ \bar v \bar \rho - \rho(t,\beta(t)-\ndt)V(t,\beta(t)-\ndt)\right] \right)\\
&=\frac{1}{\ndt}\rho(t,\beta(t)-\ndt)v'_{\max}\left[ V(t,\beta(t)-\ndt)-\bar v\right].
\end{align*}
This gives us an ordinary differential equations \rv{whose} solution is given by
\begin{align*}
\left(V(t,\beta(t)-\ndt)-\bar v\right)&=\left(V(0,b-\ndt)-\bar v\right)\exp\left(\frac{v'_{\max}}{\ndt}\int_0^t\rho(s,\beta(s)-\ndt)ds\right).
\end{align*}
Hence, if we assume \eqref{eq:initrhoL1simple}, it follows that $V(t,\beta(t)-\ndt)\geq \bar v$ for $t\geq 0$ and therefore also $\frac{d}{dt}\int_{\beta(t)-\ndt}^{\beta(t)}\rho(t,x)dx\geq 0$.
This yields the desired result.
\end{proof}
\rv{Note that the condition a) prohibits to use Lemma \ref{lem:rhoL1minsimple} for decreasing initial data and a concave kernel function in Theorem \ref{thm:macroLyapunovDensity}.}\\
Alternatively we may find a lower bound on $c_\rho$ that is determined a priori and then use a modified left boundary as outlined in the following remark.
\begin{remark}
Let us again consider a first order model, i.e. $\lm_0(x)=\lm$.
If we view the problem as an initial value problem on $\R$, the initial conditions are given by 
\begin{equation}\label{eq:modinitcond2}
\tilde \rho_0(x)=\begin{cases}
\bar \rho_a,\qquad & \text{ if }x<a,\\
\rho_0(x),\qquad & \text{ if }x\in[a,b],\\
\bar \rho_b,\qquad & \text{ if }x>b
\end{cases},\qquad
\tilde \lm_0(x)=\begin{cases}
\lm_0(a),\qquad & \text{ if }x<a,\\
\lm_0(x),\qquad & \text{ if }x\in[a,b],\\
\lm_0(b),\qquad & \text{ if }x>b,
\end{cases}
\end{equation}
where $\bar \rho_a$ is determined such that $v(\bar \rho_a, \lm_0(a))=\bar v$. Furthermore, the initial conditions must be chosen in a way such that $\bar \rho_a,\bar \rho_b, \rho_0(x)>0$ holds.
Then, by the maximum principle \cite{friedrich2018godunov} we have $\rho(t,x)\geq \min\{ \bar \rho_a,\bar \rho_b, \inf_{x\in[a,b]}\rho_0(x)\}$.
This allows to bound $c_\rho$ by $\ndt\, \min\{ \bar \rho_a,\bar \rho_b, \inf_{x\in[a,b]}\rho_0(x)\}$ from below.
Finally, this can be used for a left boundary $\tilde \alpha(t)\geq \alpha(t)$ which depends on the lower bound.
Here, the same results as in \rv{Theorem \ref{thm:macroLyapunovDensity}} are obtained by simply replacing the lower bound.
\rv{In contrast to Lemma \ref{lem:rhoL1minsimple}, there are no further assumptions on the initial data or velocity function.}
Note that due to the different inflow the solution does not correspond to the microscopic solution.
Further, note that such a strategy is not possible for the second order model, since the maximum principle does not provide a lower bound which is strictly positive.
\end{remark}
Let us close this section by comparing our results to other approaches considered in the literature.
As already mentioned in the introduction, \cite{huang2020stability} considers the stability of a first order model with linear velocity on a ring road. Our model coincides with this model for a constant Lagrangian marker and the corresponding velocity function. One of the main results in \cite{huang2020stability} is that for a constant kernel, specific initial data and a specific nonlocal reach $\ndt$ traveling waves can be created, such that no control is possible.
In particular, this is the reason why in \cite{karafyllis2022control} a different model is considered.
Nevertheless, this is \rv{not in contradiction with} our results as we do not consider a ring road and such phenomena may not occur.

\section{Numerical simulations}\label{sec:numeric}
\subsection{Numerical schemes}
To illustrate the theoretical results we  use numerical schemes to solve the microscopic equations \eqref{eq:micronewnotation} and the macroscopic system \eqref{eq:macroconservative}.\\
To compare the microscopic and macroscopic approaches we will choose the same initial density and Lagrangian marker for both scales.
To construct the initial placement of the microscopic cars for a given initial density and number of cars we use the approach outlined in \cite{DiFrancescoFagioliRosini}.
The microscopic equations \eqref{eq:micronewnotation} are a system of ordinary differential equations and solved by the method \texttt{ode23s} of \texttt{MATLAB}.\\
For the macroscopic equations we discretize  system \eqref{eq:macroconservative} using  a numerical scheme presented in \cite{friedrich2020micromacro, friedrich2018godunov}  similar to the Roe scheme presented in \cite{roe1981approximate}.
The two equations in \eqref{eq:macroconservative} are only coupled via the nonlocal term, that is computed at each time step. 
% Therefore, the two equations can be approximated separately.
This leads to the following numerical flux
\[F_{j+\frac12}^n=V_j^n \begin{pmatrix} \rho_j^n\\q_j^n\end{pmatrix},\]
where
\[V_j^n =\sum_{k=0}^{\lfloor\ndt/\Delta x\rfloor-1}\gamma_k V\left(\rho^n_{j+k+1},\frac{q_{j+k+1}^n}{\rho_{j+k+1}^n}\right)\quad\text{ and }\quad\gamma_k=\int_{k\Delta x}^{(k+1)\Delta x} \wt(y-x)dy.\]
The full scheme is then given by
\begin{equation}\label{eq:macroscheme}
     \begin{pmatrix} \rho_j^{n+1}\\q_j^{n+1}\end{pmatrix}=\begin{pmatrix} \rho_j^{n}\\q_j^{n}\end{pmatrix}-\lambda \left(F^n_{j+\frac12}-F^n_{j-\frac12}\right).
\end{equation}
At the boundaries the fluxes are given by \eqref{eq:boundary}.
The CFL condition is chosen using an adaptive step size control determined by the maximal nonlocal velocity, i.e. $\Delta t^n\leq \Delta x/\max_j V_j^n $. 

\subsection{Numerical example}
We consider the starting interval $[-1.5,1.5]$. In the microscopic case $N=500$ cars \rv{are placed in the interval according to the initial density $\rho_0(x)$} and in the macroscopic case we have \rv{$a=-1.5$ and $b=1.5$}. The spatial step size is given by $\Delta x=2.5\cdot 10^{-3}$. The constant $\ndt$ is $\ndt=0.5$ and the initial conditions are given by
\begin{align*}
\rho_0(x)=\begin{cases}
0.5,&\text{if }x<0,\\
0.3,&\text{if }x>0,
\end{cases}
\qquad\text{and}\qquad
\lm_0(x)=\rv{\begin{cases}
1,&\text{if }x<0,\\
\frac{5}{8},&\text{if }x>0.
\end{cases}}
\end{align*}
The equilibrium velocity of the right boundary and the leading vehicle respectively is given by \rv{$\bar v=0.5$}.
The velocity function is chosen as \rv{$v(\lm,\rho)=\lm(1-\rho)$}.
At $t=0$ the system  is already at its stationary state is for data with $x<0.$
It will be driven away from the equilibrium due to the traffic ahead.
Furthermore, the initial densities are larger or equal to the equilibrium density.
Hence, in the microscopic case the distance between all cars in the set $\BB$ and its follower is always smaller or equal to the equilibrium distance due to the maximum principle \eqref{eq:maxmicro}.
In particular, this implies that $J=\mathcal{J}$.
\rv{In addition, the initial conditions are chosen such that the assumptions of Theorems \ref{thm:microconstant}, \ref{thm:microconcave} and \ref{thm:macroLyapunovDensity} are fulfilled.\\
We consider first a constant kernel $\wt^{\text{const.}}=1/\ndt$.}
Figure \ref{fig:timesnapshot} shows the density in the microscopic and macroscopic case at different times.
Note that due to the applied boundary conditions, we have zero inflow at the left boundary of the macroscopic solution such that the density drops to zero. 
The theoretical results  demonstrate only the stabilization of the density in the interval ahead of the red dotted lines.
Nevertheless, also the density outside this area seems to converge towards the steady state solution.
\rv{
\begin{figure}
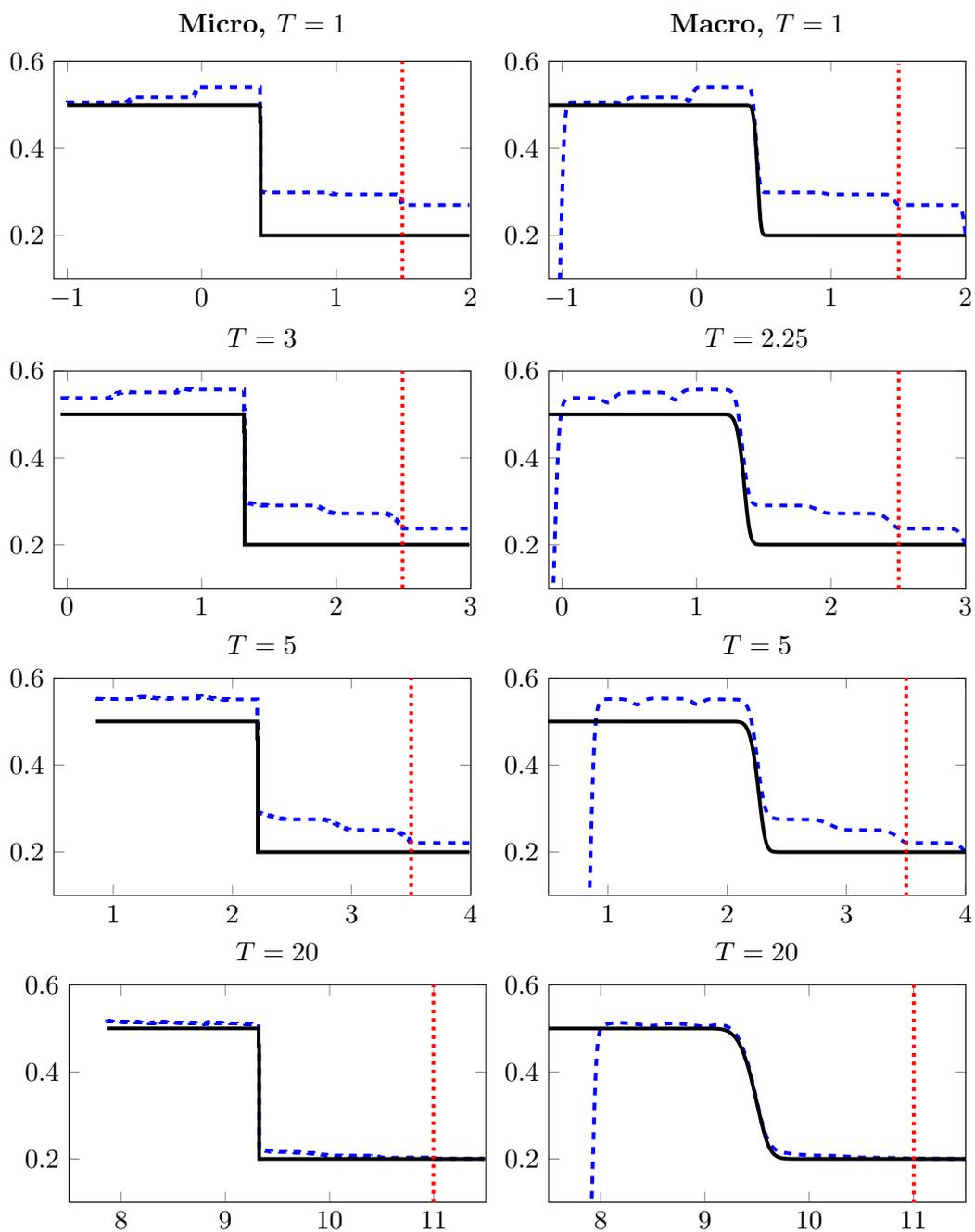

\setlength{\fwidth}{0.4\textwidth}
\begin{center}
\input{Plotsnew/microdensityT1}
% This file was created by matlab2tikz.
%
%The latest updates can be retrieved from
%  http://www.mathworks.com/matlabcentral/fileexchange/22022-matlab2tikz-matlab2tikz
%where you can also make suggestions and rate matlab2tikz.
%
\begin{tikzpicture}

\begin{axis}[%
width=0.951\fwidth,
height=0.5\fwidth,
at={(0\fwidth,0\fwidth)},
scale only axis,
xmin=-1.1,
xmax=2,
ymin=0.1,
ymax=0.6,
axis background/.style={fill=white},
title style={font=\bfseries},
title={Macro, $T=1$}
]
\addplot [color=blue, dashed, line width=1.5pt, forget plot]
  table[row sep=crcr]{%
-1.49875	2.22044604925031e-16\\
-1.05625	0.000126176006980838\\
-1.05125	0.000424858984864329\\
-1.04625	0.00128095840988118\\
-1.04375	0.00213455289439546\\
-1.04125	0.00346127428862197\\
-1.03875	0.00546245041032956\\
-1.03625	0.00839150356481366\\
-1.03375	0.0125513196254736\\
-1.03125	0.0182833932572406\\
-1.02875	0.0259473345063215\\
-1.02625	0.0358909272492143\\
-1.02375	0.048413081725224\\
-1.02125	0.0637242371143409\\
-1.01875	0.081910375470547\\
-1.01625	0.102907169993956\\
-1.01375	0.126489579095871\\
-1.00875	0.179771359639131\\
-0.99625	0.321391817008813\\
-0.99125	0.36987900085542\\
-0.98875	0.390979661747061\\
-0.98625	0.409829697235357\\
-0.98375	0.42642443960992\\
-0.98125	0.440830089382476\\
-0.97875	0.453167054701173\\
-0.97625	0.463593992366525\\
-0.97375	0.472293464591466\\
-0.97125	0.479459706933229\\
-0.96875	0.48528868859826\\
-0.96625	0.489970430804167\\
-0.96375	0.493683414117472\\
-0.96125	0.496590830704889\\
-0.95875	0.498838403480926\\
-0.95625	0.500553486795559\\
-0.95375	0.501845172675127\\
-0.95125	0.502805146440748\\
-0.94625	0.504018237538653\\
-0.94125	0.504637088114381\\
-0.93375	0.505017372660738\\
-0.91875	0.505159466675437\\
-0.74875	0.505167703055856\\
-0.59375	0.504989972550911\\
-0.57875	0.504458765171987\\
-0.55625	0.503409895826442\\
-0.54875	0.50370398931652\\
-0.54375	0.504265164791942\\
-0.53625	0.505678969792654\\
-0.52875	0.507641106233488\\
-0.50875	0.513262904551089\\
-0.50125	0.514777141421347\\
-0.49375	0.515823207855489\\
-0.48625	0.516466311980159\\
-0.47625	0.516890549690441\\
-0.45875	0.51708505923649\\
-0.36375	0.517104734403123\\
-0.10875	0.516928617640943\\
-0.0987499999999999	0.516480830308931\\
-0.0912500000000001	0.515715073181453\\
-0.08375	0.51439828226336\\
-0.0762499999999999	0.512538749311521\\
-0.0662499999999999	0.509947094974672\\
-0.06125	0.509174115353063\\
-0.0562499999999999	0.50915363438362\\
-0.05375	0.509502018866996\\
-0.05125	0.510113409107319\\
-0.0487500000000001	0.510991839183613\\
-0.04375	0.513505322441925\\
-0.0387500000000001	0.516842619884296\\
-0.0287500000000001	0.524612922128637\\
-0.0237499999999999	0.528329228687922\\
-0.01875	0.531580507442928\\
-0.0137499999999999	0.534243801606289\\
-0.00875000000000004	0.536299484836099\\
-0.00374999999999992	0.537800995827051\\
0.00124999999999997	0.538841646883857\\
0.00875000000000004	0.539768004304795\\
0.01875	0.540294625892499\\
0.0337499999999999	0.540484691193933\\
0.11375	0.540507428934051\\
0.37625	0.540305608518017\\
0.38375	0.539880455566108\\
0.38875	0.539253722263348\\
0.39375	0.53812067918793\\
0.39625	0.537273408159626\\
0.39875	0.536177256885464\\
0.40125	0.534777494749395\\
0.40375	0.53301310704266\\
0.40625	0.530817752148707\\
0.40875	0.528121290510052\\
0.41125	0.524851937470202\\
0.41375	0.520939056672338\\
0.41625	0.516316563040172\\
0.41875	0.510926844945992\\
0.42125	0.504725046040581\\
0.42375	0.497683472668494\\
0.42625	0.489795819958167\\
0.43125	0.471585110226155\\
0.43625	0.45058301136759\\
0.44375	0.415992702816477\\
0.45125	0.381691060194867\\
0.45625	0.361201943864337\\
0.46125	0.343742886497687\\
0.46375	0.336305767867939\\
0.46625	0.329751774469028\\
0.46875	0.324062555970957\\
0.47125	0.31919726053628\\
0.47375	0.315097669155036\\
0.47625	0.311693516029332\\
0.47875	0.308907568295183\\
0.48125	0.306660144842118\\
0.48375	0.304872869890353\\
0.48625	0.303471566331636\\
0.48875	0.302388286014747\\
0.49375	0.300941867551335\\
0.49875	0.300145504798811\\
0.50625	0.299609881286066\\
0.51875	0.299388962188945\\
0.57625	0.2993605403336\\
0.91125	0.299175349619715\\
0.92875	0.298635077051852\\
0.94375	0.297718007343637\\
0.97875	0.295272776345212\\
0.99375	0.294844252185843\\
1.01875	0.294735857012042\\
1.41125	0.294421163753756\\
1.42125	0.29389725835311\\
1.42875	0.293150277955797\\
1.43625	0.291967614795867\\
1.44375	0.290251636542957\\
1.45125	0.28796204363044\\
1.45875	0.285146672010886\\
1.47125	0.27974567845628\\
1.48125	0.27553835482735\\
1.48875	0.272965803598792\\
1.49375	0.27169437938113\\
1.49875	0.270856340262391\\
1.50625	0.27032123509425\\
1.51875	0.270128138931173\\
1.59125	0.270106474101496\\
1.93625	0.269889242187154\\
1.94375	0.269483184548725\\
1.94875	0.268918206789238\\
1.95375	0.267941634533697\\
1.95875	0.266335738597166\\
1.96125	0.265212435170408\\
1.96375	0.263824475709835\\
1.96625	0.262131356766154\\
1.96875	0.260092548617382\\
1.97125	0.257669384029293\\
1.97375	0.254827281145678\\
1.97625	0.251538171115534\\
1.97875	0.247782817902858\\
1.98125	0.243552482748157\\
1.98625	0.233690089978507\\
1.99125	0.222165583169065\\
1.99875	0.202789273597503\\
};
\addplot [color=black, line width=1.5pt, forget plot]
  table[row sep=crcr]{%
-1.49875	0.5\\
0.37875	0.499782259509143\\
0.38625	0.499361048086364\\
0.39125	0.498770079802809\\
0.39625	0.497744722415542\\
0.40125	0.496053151622091\\
0.40375	0.494865770976403\\
0.40625	0.493392835041679\\
0.40875	0.491585692263907\\
0.41125	0.489391824450796\\
0.41375	0.486755264964398\\
0.41625	0.483617118353366\\
0.41875	0.479916195215369\\
0.42125	0.475589796012906\\
0.42375	0.470574703399213\\
0.42625	0.464808472325227\\
0.42875	0.458231137232225\\
0.43125	0.450787480242866\\
0.43375	0.442430014664513\\
0.43625	0.433122821697775\\
0.43875	0.42284631844585\\
0.44375	0.399423305109769\\
0.44875	0.372557477927532\\
0.45625	0.328238081741053\\
0.46375	0.284713405929236\\
0.46875	0.259641830696511\\
0.47125	0.248888261112877\\
0.47375	0.239451862722625\\
0.47625	0.231346934238825\\
0.47875	0.224529402033908\\
0.48125	0.218909154975254\\
0.48375	0.214364623014951\\
0.48625	0.21075719787946\\
0.48875	0.207943741910233\\
0.49125	0.205786232870178\\
0.49375	0.204158309309617\\
0.49625	0.202948987300203\\
0.49875	0.202064093381261\\
0.50375	0.200972550858983\\
0.50875	0.200435163898053\\
0.51625	0.200118208641816\\
0.53375	0.200003568841585\\
0.91125	0.2\\
1.99875	0.2\\
};
\addplot [color=red, dotted, line width=1.5pt, forget plot]
  table[row sep=crcr]{%
1.5025	2.22044604925031e-16\\
1.5025	0.594558171827486\\
};
\end{axis}

\end{tikzpicture}%
\input{Plotsnew/microdensityT3}
% This file was created by matlab2tikz.
%
%The latest updates can be retrieved from
%  http://www.mathworks.com/matlabcentral/fileexchange/22022-matlab2tikz-matlab2tikz
%where you can also make suggestions and rate matlab2tikz.
%
\begin{tikzpicture}

\begin{axis}[%
width=0.951\fwidth,
height=0.5\fwidth,
at={(0\fwidth,0\fwidth)},
scale only axis,
xmin=-0.1,
xmax=3,
ymin=0.1,
ymax=0.6,
axis background/.style={fill=white},
title style={font=\bfseries},
title={$T=2.25$}
]
\addplot [color=blue, dashed, line width=1.5pt, forget plot]
  table[row sep=crcr]{%
-1.49875	0\\
-0.10875	0.000173051432796445\\
-0.10375	0.000496347367661798\\
-0.0987499999999999	0.00130701725420757\\
-0.0962499999999999	0.00205407119163548\\
-0.09375	0.0031599858468816\\
-0.0912500000000001	0.00475890964152503\\
-0.0887500000000001	0.00701632497999816\\
-0.0862500000000002	0.0101281996436593\\
-0.0837500000000002	0.014316349684647\\
-0.0812500000000003	0.0198192732636091\\
-0.0787500000000003	0.0268782689362794\\
-0.0762499999999999	0.0357194411385318\\
-0.07375	0.04653309580423\\
-0.07125	0.0594528410951942\\
-0.0687500000000001	0.0745372033561487\\
-0.0662500000000001	0.0917565592401313\\
-0.0612500000000002	0.132016450204841\\
-0.0562499999999999	0.178235740766576\\
-0.0412500000000002	0.322941141387768\\
-0.0362499999999999	0.364572381532401\\
-0.03125	0.40054444460645\\
-0.0262500000000001	0.430646398994475\\
-0.0237500000000002	0.443580733029942\\
-0.0212500000000002	0.455194025559835\\
-0.0187500000000003	0.465570488874623\\
-0.0162500000000003	0.474801382311555\\
-0.0137499999999999	0.48298093544288\\
-0.01125	0.490203196844815\\
-0.00875000000000004	0.496559698325587\\
-0.00625000000000009	0.50213780892368\\
-0.00375000000000014	0.507019652470513\\
-0.0012500000000002	0.511281471297556\\
0.0012500000000002	0.514993332430005\\
0.00375000000000014	0.518219088319645\\
0.00625000000000009	0.521016519864272\\
0.00875000000000004	0.523437603983117\\
0.0137499999999999	0.527331746147143\\
0.0187500000000003	0.530215383376239\\
0.0237500000000002	0.532334514716223\\
0.0287500000000001	0.533879786485424\\
0.0337499999999999	0.534997583584202\\
0.0412500000000002	0.536109025370342\\
0.05125	0.536923911525536\\
0.0637500000000002	0.537376785858321\\
0.0862500000000002	0.53759978050878\\
0.17625	0.537633660556911\\
0.24125	0.537399682431758\\
0.25875	0.536875060421675\\
0.27125	0.536076237854734\\
0.28375	0.534753881395118\\
0.29625	0.532859143867748\\
0.32625	0.527702240155147\\
0.33375	0.527019826533125\\
0.34125	0.526876649999577\\
0.34875	0.527357646019083\\
0.35625	0.528475576456013\\
0.36375	0.530166097738107\\
0.37375	0.533079535784205\\
0.40375	0.542521183042872\\
0.41375	0.544904036074034\\
0.42375	0.54672698697478\\
0.43375	0.548039503240424\\
0.44625	0.549106491413152\\
0.46125	0.54980595866618\\
0.48125	0.550194501039388\\
0.51875	0.550343382637211\\
0.72125	0.550269936476472\\
0.74375	0.549892598824056\\
0.75625	0.549304491210453\\
0.76625	0.548461467233904\\
0.77625	0.547148945811376\\
0.78625	0.545269721885674\\
0.79625	0.542822323991204\\
0.82625	0.534752249430135\\
0.83125	0.533968031887128\\
0.83625	0.533532149277881\\
0.84125	0.533491249494908\\
0.84625	0.533868523351359\\
0.85125	0.534660092996861\\
0.85625	0.535834384348924\\
0.86375	0.538184039861706\\
0.87375	0.541986938298425\\
0.88875	0.547731983122383\\
0.89625	0.550146188630504\\
0.90375	0.552113701671302\\
0.91125	0.55362867454238\\
0.91875	0.554737459574005\\
0.92875	0.555710289523803\\
0.94125	0.556366373317028\\
0.95875	0.556726877195246\\
0.99625	0.556851396920227\\
1.20375	0.556731718263302\\
1.22125	0.556356691633446\\
1.23375	0.555637561384574\\
1.24375	0.554508712221342\\
1.25125	0.553140624687487\\
1.25625	0.551885062160312\\
1.26125	0.550282611133637\\
1.26625	0.548262527560148\\
1.27125	0.545746719308218\\
1.27625	0.542650902498377\\
1.28125	0.538886450843461\\
1.28625	0.534363079475719\\
1.29125	0.528992499834919\\
1.29625	0.522693149658677\\
1.30125	0.515396025805443\\
1.30625	0.507051512084141\\
1.31125	0.497636891249748\\
1.31625	0.487163968087131\\
1.32125	0.475685943410852\\
1.32875	0.456815502566025\\
1.33625	0.436456996092578\\
1.35625	0.380979126226222\\
1.36375	0.362215901564227\\
1.36875	0.35090855211504\\
1.37375	0.340712612383908\\
1.37875	0.331693258322472\\
1.38375	0.323861842207191\\
1.38875	0.317183051360638\\
1.39375	0.311584769377349\\
1.39875	0.30696903573207\\
1.40375	0.303222726130624\\
1.40875	0.300226959133443\\
1.41375	0.297864655615846\\
1.41875	0.296026051580958\\
1.42375	0.294612247367949\\
1.43125	0.293103129323612\\
1.43875	0.292123366263996\\
1.44875	0.291347051322989\\
1.46125	0.290863470233515\\
1.48375	0.290572573832947\\
1.54375	0.290509824760436\\
1.77625	0.290296321472955\\
1.80125	0.289800922845973\\
1.82125	0.288952387060551\\
1.83875	0.287745989016451\\
1.85625	0.286060819167365\\
1.87625	0.283619889269441\\
1.94125	0.275124687770257\\
1.95875	0.273624411154713\\
1.97625	0.272669221291433\\
1.99375	0.272202480406023\\
2.02125	0.272026817794198\\
2.20375	0.272011541236831\\
2.30625	0.271788496202454\\
2.32875	0.271304552872697\\
2.34625	0.270488547676581\\
2.36125	0.269305851480492\\
2.37375	0.26787386119023\\
2.38625	0.265965946293997\\
2.39875	0.263544883670983\\
2.41125	0.260614286107467\\
2.42375	0.257230850415716\\
2.44125	0.251974276521593\\
2.46375	0.245184434766704\\
2.47625	0.24195227974492\\
2.48625	0.239913813918679\\
2.49375	0.238802098808923\\
2.50125	0.23810191453023\\
2.51125	0.237652802749103\\
2.52875	0.237410795162709\\
2.58625	0.237357488522795\\
2.88125	0.237147393898314\\
2.89875	0.236703697613994\\
2.91125	0.236000433390729\\
2.92125	0.23503633291968\\
2.93125	0.233548983678614\\
2.93875	0.231983088930192\\
2.94625	0.229945120658613\\
2.95375	0.227364411020907\\
2.96125	0.224186992848997\\
2.96875	0.220387043015104\\
2.97625	0.215978501163204\\
2.98375	0.211024853395434\\
2.99375	0.203784605809717\\
2.99875	0.200018761162031\\
};
\addplot [color=black, line width=1.5pt, forget plot]
  table[row sep=crcr]{%
-1.49875	0.5\\
1.21125	0.499767714869196\\
1.22625	0.499285688541607\\
1.23625	0.498589649558855\\
1.24375	0.497731402125924\\
1.25125	0.496455010411583\\
1.25875	0.494613136711898\\
1.26375	0.492985715150718\\
1.26875	0.490971483578669\\
1.27375	0.488507777892153\\
1.27875	0.485528215868334\\
1.28375	0.481963570090874\\
1.28875	0.477742794049399\\
1.29375	0.472794227128303\\
1.29875	0.467047038488319\\
1.30375	0.460433013074522\\
1.30875	0.45288883119766\\
1.31375	0.44435903859085\\
1.31875	0.434799932756397\\
1.32375	0.42418458127951\\
1.32875	0.412509105596322\\
1.33375	0.39980016799924\\
1.33875	0.386123251048149\\
1.34625	0.364053397334268\\
1.35625	0.332742594283722\\
1.36875	0.293732483766048\\
1.37375	0.279217786720582\\
1.37875	0.265784129861496\\
1.38375	0.25366111102643\\
1.38875	0.242996004142073\\
1.39375	0.233846920857144\\
1.39875	0.226188292440744\\
1.40375	0.219926087870421\\
1.40875	0.214918417529871\\
1.41375	0.210996993799529\\
1.41875	0.207985968869158\\
1.42375	0.205716237859613\\
1.42875	0.204034747652798\\
1.43375	0.202809347629871\\
1.44125	0.201592066481518\\
1.44875	0.200876147473081\\
1.45875	0.200377746234921\\
1.47625	0.20007663320263\\
1.52125	0.200000613738288\\
2.99875	0.2\\
};
\addplot [color=red, dotted, line width=1.5pt, forget plot]
  table[row sep=crcr]{%
2.5025	0\\
2.5025	0.612554213014179\\
};
\end{axis}
\end{tikzpicture}%
\input{Plotsnew/microdensityT5}
% This file was created by matlab2tikz.
%
%The latest updates can be retrieved from
%  http://www.mathworks.com/matlabcentral/fileexchange/22022-matlab2tikz-matlab2tikz
%where you can also make suggestions and rate matlab2tikz.
%
\begin{tikzpicture}

\begin{axis}[%
width=0.951\fwidth,
height=0.5\fwidth,
at={(0\fwidth,0\fwidth)},
scale only axis,
xmin=0.5,
xmax=4,
ymin=0.1,
ymax=0.6,
axis background/.style={fill=white},
title style={font=\bfseries},
title={$T=5$}
]
\addplot [color=blue, dashed, line width=1.5pt, forget plot]
  table[row sep=crcr]{%
-1.49875	0\\
0.80125	0.000174059672231941\\
0.80625	0.000505036413464222\\
0.81125	0.00134258776734519\\
0.81375	0.00211845140214217\\
0.81625	0.00327056599513487\\
0.81875	0.004940583857171\\
0.82125	0.00730326670196257\\
0.82375	0.0105653933847645\\
0.82625	0.0149605384786775\\
0.82875	0.0207389486438645\\
0.83125	0.0281523700865955\\
0.83375	0.0374345593558316\\
0.83625	0.0487791954572958\\
0.83875	0.0623177767172676\\
0.84125	0.0781005770687981\\
0.84375	0.0960836547860211\\
0.84875	0.137985184406944\\
0.85375	0.185838509050023\\
0.86625	0.310910295500637\\
0.87125	0.355705892322594\\
0.87625	0.39494570080064\\
0.88125	0.428144498641594\\
0.88375	0.442510798633564\\
0.88625	0.455463635751703\\
0.88875	0.467082556428948\\
0.89125	0.477458007117987\\
0.89375	0.486686279961254\\
0.89625	0.494865513909505\\
0.89875	0.502092655868883\\
0.90125	0.508461254154102\\
0.90375	0.514059945905028\\
0.90625	0.518971503774138\\
0.90875	0.523272319254499\\
0.91125	0.527032216209212\\
0.91375	0.53031450557591\\
0.91625	0.533176209036503\\
0.92125	0.537836580568688\\
0.92625	0.54135792109007\\
0.93125	0.544010205507108\\
0.93625	0.546002581216433\\
0.94125	0.547495731538475\\
0.94875	0.549059537241784\\
0.95625	0.550066271557364\\
0.96625	0.550869546783107\\
0.97875	0.551383297725876\\
0.99875	0.55169602933462\\
1.04625	0.551804138852639\\
1.10875	0.551575707156466\\
1.13125	0.551064990237167\\
1.14875	0.550195999350032\\
1.16375	0.548948936969988\\
1.17875	0.547157029699999\\
1.19625	0.544487393732222\\
1.21875	0.540981139953948\\
1.22875	0.539840191013733\\
1.23875	0.539199493231168\\
1.24625	0.539119651778773\\
1.25375	0.539407079288005\\
1.26375	0.540337935328627\\
1.27375	0.541791050339782\\
1.28875	0.544550405521639\\
1.30875	0.548250920578143\\
1.32125	0.55009483258059\\
1.33375	0.551440527266022\\
1.34625	0.55232068041972\\
1.36125	0.552903384533675\\
1.38125	0.553190077314119\\
1.42375	0.553183239197704\\
1.63375	0.552537361987389\\
1.65125	0.551811295820884\\
1.66625	0.550706103069851\\
1.67875	0.549339005830375\\
1.69125	0.547538070836145\\
1.70875	0.544462449807869\\
1.72625	0.541388450360769\\
1.73625	0.540052224913779\\
1.74375	0.53941556047323\\
1.75125	0.539168143940298\\
1.75875	0.539343445431273\\
1.76625	0.539934236026458\\
1.77625	0.541277957576287\\
1.78875	0.543542048481706\\
1.81125	0.547731532274928\\
1.82375	0.549464444346785\\
1.83625	0.550614054600199\\
1.84875	0.551254169740653\\
1.86375	0.551549212503921\\
1.88875	0.551477082716302\\
1.97125	0.550943632215055\\
2.08125	0.550569262117742\\
2.09875	0.55002391847986\\
2.11125	0.549252843913959\\
2.12125	0.548262092705081\\
2.13125	0.546793388695684\\
2.13875	0.545276884576408\\
2.14625	0.543311196537284\\
2.15375	0.540800237889145\\
2.16125	0.537637752243108\\
2.16625	0.535111050921466\\
2.17125	0.53220825206207\\
2.17625	0.528893028145649\\
2.18125	0.525128988854972\\
2.18625	0.520880495209819\\
2.19125	0.516113676883433\\
2.19625	0.510797675812305\\
2.20125	0.504906128493742\\
2.20625	0.498418880905967\\
2.21125	0.49132390266433\\
2.21625	0.483619330877143\\
2.22375	0.470945981983584\\
2.23125	0.457024028066213\\
2.23875	0.442031692772118\\
2.24875	0.420842771276912\\
2.27125	0.372445714629472\\
2.27875	0.357531258332641\\
2.28625	0.343814350683717\\
2.29125	0.335454177574731\\
2.29625	0.32777511591238\\
2.30125	0.320799688519502\\
2.30625	0.314530893436542\\
2.31125	0.308954407386718\\
2.31625	0.304041440326812\\
2.32125	0.299751930458277\\
2.32625	0.296037797156735\\
2.33125	0.292846021891924\\
2.33625	0.290121391354171\\
2.34125	0.287808801733243\\
2.34875	0.284997047465482\\
2.35625	0.28282882479796\\
2.36375	0.281161715091666\\
2.37375	0.279517981169956\\
2.38375	0.27834319811455\\
2.39625	0.277309181592719\\
2.41375	0.27635823637289\\
2.43625	0.275624663022835\\
2.46625	0.275140351945393\\
2.51125	0.274951777625357\\
2.70125	0.274547457269673\\
2.73125	0.273928754599961\\
2.75625	0.272973923791991\\
2.77875	0.271663373678339\\
2.80125	0.26987407582265\\
2.82375	0.26761905008076\\
2.85125	0.264356796916728\\
2.92375	0.255388835883777\\
2.94375	0.253523519405087\\
2.96375	0.252155477558148\\
2.98375	0.251311888906783\\
3.00625	0.250887695684586\\
3.04625	0.250740578172182\\
3.26375	0.25037123092404\\
3.29375	0.249751097947089\\
3.31625	0.24885689107619\\
3.33625	0.247610771187843\\
3.35375	0.24608019749768\\
3.37125	0.244074060921866\\
3.38875	0.241559920642338\\
3.40625	0.238554881404889\\
3.42625	0.234640562160028\\
3.46875	0.226145788662544\\
3.48125	0.224165227026241\\
3.49375	0.222690258970098\\
3.50375	0.221959610474739\\
3.51875	0.22142386932686\\
3.54125	0.221168154396461\\
3.60625	0.221102827120861\\
3.85625	0.220870703244438\\
3.88125	0.220387176430111\\
3.89875	0.21965878902467\\
3.91375	0.21860977400686\\
3.92625	0.217322927761013\\
3.93875	0.215569756817214\\
3.95125	0.213282801585712\\
3.96375	0.210432664222397\\
3.97625	0.207049316896621\\
3.99125	0.202439867014446\\
3.99875	0.200008091740751\\
};
\addplot [color=black, line width=1.5pt, forget plot]
  table[row sep=crcr]{%
-1.49875	0.5\\
2.07125	0.499790353633827\\
2.09375	0.499256613692387\\
2.10875	0.498413858095128\\
2.11875	0.49746654480382\\
2.12875	0.49606752934034\\
2.13625	0.494629890283451\\
2.14375	0.492776406561253\\
2.15125	0.49042381414666\\
2.15875	0.487482100827338\\
2.16625	0.483856013957619\\
2.17125	0.481009772757557\\
2.17625	0.477785904858809\\
2.18125	0.474154838060056\\
2.18625	0.470087424907602\\
2.19125	0.465555346625842\\
2.19625	0.460531577587637\\
2.20125	0.454990929039297\\
2.20625	0.448910695334441\\
2.21125	0.442271429744145\\
2.21625	0.435057878989927\\
2.22125	0.42726010450338\\
2.22875	0.41446300984989\\
2.23625	0.400371186317957\\
2.24375	0.385064996606767\\
2.25125	0.36869418717254\\
2.26125	0.345623265745318\\
2.28375	0.292798188649859\\
2.29125	0.276528570866453\\
2.29875	0.261669010814284\\
2.30375	0.252709229728754\\
2.30875	0.244581982765223\\
2.31375	0.23731850699215\\
2.31875	0.230921060890893\\
2.32375	0.225365880856655\\
2.32875	0.220607755410876\\
2.33375	0.216585490237855\\
2.33875	0.213227551202286\\
2.34375	0.210457302789834\\
2.34875	0.208197450045044\\
2.35375	0.206373489403207\\
2.36125	0.204305108295159\\
2.36875	0.202858491664244\\
2.37625	0.201866753378998\\
2.38625	0.201031512577016\\
2.39875	0.200472501021916\\
2.41875	0.200123950171369\\
2.46125	0.200005033381073\\
3.10125	0.2\\
3.99875	0.2\\
};
\addplot [color=red, dotted, line width=1.5pt, forget plot]
  table[row sep=crcr]{%
3.5025	0\\
3.5025	0.612554213014639\\
};
\end{axis}
\end{tikzpicture}%
\input{Plotsnew/microdensityT20}
% This file was created by matlab2tikz.
%
%The latest updates can be retrieved from
%  http://www.mathworks.com/matlabcentral/fileexchange/22022-matlab2tikz-matlab2tikz
%where you can also make suggestions and rate matlab2tikz.
%
\begin{tikzpicture}

\begin{axis}[%
width=0.951\fwidth,
height=0.5\fwidth,
at={(0\fwidth,0\fwidth)},
scale only axis,
xmin=7.5,
xmax=11.5,
ymin=0.1,
ymax=0.6,
axis background/.style={fill=white},
title style={font=\bfseries},
title={$T=20$}
]
\addplot [color=blue, dashed, line width=1.5pt, forget plot]
  table[row sep=crcr]{%
-1.49875	0\\
7.86875	0.00020264035170392\\
7.87375	0.000554610532303101\\
7.87875	0.00139923819067711\\
7.88125	0.002155913508334\\
7.88375	0.00325533571582426\\
7.88625	0.00481745006476508\\
7.88875	0.0069877378726666\\
7.89125	0.0099358796764708\\
7.89375	0.0138513874863282\\
7.89625	0.0189357378183441\\
7.89875	0.025390967985194\\
7.90125	0.0334052775931646\\
7.90375	0.0431368133322714\\
7.90625	0.0546973714815238\\
7.90875	0.0681380788656565\\
7.91125	0.0834390901322291\\
7.91625	0.119166301581613\\
7.92125	0.160284716602643\\
7.93875	0.311259276271302\\
7.94375	0.347726091571488\\
7.94875	0.379235946784128\\
7.95375	0.405716574848325\\
7.95625	0.417159449569109\\
7.95875	0.427483825766179\\
7.96125	0.43676141747541\\
7.96375	0.445068779610494\\
7.96625	0.452484296374454\\
7.96875	0.459085850080964\\
7.97125	0.464949087750052\\
7.97375	0.470146193942876\\
7.97625	0.47474507891587\\
7.97875	0.478808897692288\\
7.98125	0.482395825344202\\
7.98625	0.488346753870468\\
7.99125	0.492965599145469\\
7.99625	0.496549480428646\\
8.00125	0.499334689022005\\
8.00625	0.501506918647882\\
8.01125	0.503210769247346\\
8.01875	0.505125617423911\\
8.02625	0.506506066578936\\
8.03625	0.507821412030399\\
8.04875	0.508974669500306\\
8.06875	0.5102625739815\\
8.09625	0.511500353847252\\
8.12625	0.512364533743089\\
8.15625	0.512766352974024\\
8.19125	0.51275765477606\\
8.23625	0.512237968420816\\
8.29125	0.511115112459901\\
8.34875	0.509476622415304\\
8.46125	0.506049765793273\\
8.48625	0.505919980833124\\
8.51125	0.506266433435075\\
8.53875	0.507155675309866\\
8.63625	0.510860540725938\\
8.66875	0.511258187135329\\
8.70625	0.511204258008794\\
8.75375	0.51063531810413\\
8.80875	0.509509342795605\\
8.86375	0.507910856339485\\
8.97375	0.504436058330933\\
8.99875	0.504340436603716\\
9.02125	0.504696964457812\\
9.04875	0.505619758597383\\
9.10875	0.507865061492865\\
9.13125	0.508153516715499\\
9.15125	0.507969061264472\\
9.17125	0.507302265337096\\
9.18875	0.506271169318403\\
9.20625	0.504764144319061\\
9.22125	0.503040140209382\\
9.23625	0.500860442191108\\
9.25125	0.498163318322128\\
9.26375	0.495471609985238\\
9.27625	0.492331825216137\\
9.28875	0.488701050504494\\
9.30125	0.484536012472713\\
9.31375	0.479794007980264\\
9.32375	0.475557263283669\\
9.33375	0.47090498561376\\
9.34375	0.465818763140684\\
9.35375	0.460282118547456\\
9.36375	0.454281030172211\\
9.37375	0.447804541392566\\
9.38375	0.440845477168518\\
9.39375	0.433401283898133\\
9.40375	0.425475001387872\\
9.41625	0.414904770594147\\
9.42875	0.403633752335734\\
9.44125	0.391721676189235\\
9.45625	0.376704369195622\\
9.47375	0.358451708548232\\
9.51875	0.31102988895381\\
9.53125	0.298639602782451\\
9.54375	0.286950244\\
9.55375	0.278209078188985\\
9.56375	0.270078261353889\\
9.57375	0.26260104323566\\
9.58375	0.255801656831867\\
9.59375	0.249685879179724\\
9.60375	0.244242664082098\\
9.61375	0.239446570169754\\
9.62375	0.235260667241095\\
9.63375	0.231639613886687\\
9.64375	0.228532646290386\\
9.65375	0.225886284967091\\
9.66625	0.223144074096661\\
9.67875	0.220935733488387\\
9.69125	0.219166964281055\\
9.70625	0.21750658975043\\
9.72375	0.216051313178943\\
9.74625	0.214704610735986\\
9.77625	0.213450769496152\\
9.82375	0.212010821069406\\
9.91875	0.209660895456802\\
9.98125	0.20853021563325\\
10.04125	0.207916073090226\\
10.13875	0.207482829110734\\
10.25375	0.206839941660453\\
10.33125	0.205944180794203\\
10.43125	0.204249922518366\\
10.50625	0.203171240227135\\
10.57125	0.202735112609689\\
10.68875	0.202534892447305\\
10.83125	0.202147625212993\\
10.93375	0.201358170977032\\
11.03875	0.200614444625856\\
11.15375	0.200457839265976\\
11.45625	0.200140186766612\\
11.49875	0.200000085354377\\
};
\addplot [color=black, line width=1.5pt, forget plot]
  table[row sep=crcr]{%
-1.49875	0.5\\
9.07125	0.499771123197435\\
9.11625	0.499261689696024\\
9.14625	0.498502651031785\\
9.17125	0.497416430402543\\
9.19125	0.496112896985816\\
9.20875	0.494551610858288\\
9.22625	0.492498819144185\\
9.24125	0.490269114411674\\
9.25625	0.487531392853153\\
9.26875	0.484809250214745\\
9.28125	0.481640636777579\\
9.29375	0.477982652633079\\
9.30625	0.473793228488566\\
9.31875	0.469031928269567\\
9.33125	0.463660817813786\\
9.34125	0.458901550403688\\
9.35125	0.453714211399022\\
9.36125	0.448085380565733\\
9.37125	0.442004271360409\\
9.38125	0.435463193935259\\
9.39125	0.428458071432232\\
9.40125	0.42098901654453\\
9.41375	0.411008342758615\\
9.42625	0.400332954029976\\
9.43875	0.389000830745033\\
9.45125	0.377068445305913\\
9.46625	0.36206802780352\\
9.48375	0.343858955171983\\
9.53375	0.291228282566909\\
9.54625	0.278923295616742\\
9.55875	0.267346659026146\\
9.56875	0.258721279363169\\
9.57875	0.250734831501049\\
9.58875	0.243435365352944\\
9.59875	0.236850194803612\\
9.60875	0.230985697192237\\
9.61875	0.22582867733929\\
9.62875	0.221349025155213\\
9.63875	0.217503242650844\\
9.64875	0.214238360437873\\
9.65875	0.2114958003274\\
9.66875	0.209214841688816\\
9.68125	0.206921604875903\\
9.69375	0.205145610570316\\
9.70875	0.203558250988632\\
9.72625	0.202274094979218\\
9.74625	0.201333520780878\\
9.77125	0.200662460234888\\
9.80625	0.200234300666873\\
9.86875	0.200030746407553\\
10.12875	0.200000000598763\\
11.49875	0.199999999999999\\
};
\addplot [color=red, dotted, line width=1.5pt, forget plot]
  table[row sep=crcr]{%
11.0025	0\\
11.0025	0.612554213015205\\
};
\end{axis}

\end{tikzpicture}%
\end{center}
\caption{Solution of the microscopic (left column) and macroscopic (right column) density (blue dashed line) compared to the equilibrium density (black solid line). The vertical red line gives the position of the car $x_{N}(t)$ (microscopic) or the boundary $\alpha(t)$ (macroscopic).}\label{fig:timesnapshot}
\end{figure}
}\\
This is shown in Figure \ref{fig:lyapunov}, too, which displays the logarithm of the Lyapunov functions \eqref{eq:microLyapunov} and \eqref{eq:macroLyapunov} respectively together with the theoretical upper bounds.
Here, \rv{also the following kernel functions, in line with \eqref{eq:ass:kernel}, are considered:
\begin{align*}
\wt^{\text{lin.}}(x)=2\frac{\ndt-x}{\ndt^2},\qquad \wt^{\text{lin2.}}(x)=\frac{3\ndt-2x}{2\ndt^2},\qquad \wt^{\text{conc.}}(x)=3\frac{\ndt^2-x^2}{\ndt^3}.
\end{align*}
Note that for the second linear kernel function $\wt^{\text{lin2.}}(\ndt)>0$ holds.
This plays a particular role when we consider the theoretical upper bounds from the Theorems \ref{thm:microconcave} and \ref{thm:macroLyapunovDensity}.
Here, the rates are only slightly decreasing for $\wt^{\text{lin.}}$ and $\wt^{\text{conc.}}$.
As the initial conditions are greater than the equilibrium density, the distance between the last car of the bulk and the leading vehicle will be $\ndt$ in the limit, i.e. $\lim_{t\to\infty} x_N(t)-x_J(t)=\ndt$ and $\lim_{t\to\infty} \beta(t)-\alpha(t)=\ndt$. 
In addition, we have $\wt^{\text{lin.}}(\ndt)=\wt^{\text{conc.}}(\ndt)=0$ which results in a slight decrease.
We note that this only occurs, if $\wt(\ndt)=0$ and the initial data are larger than the equilibrium density.
In case of increasing initial data smaller than the equilibrium density the bounds presented in Theorems \ref{thm:microconcave} and \ref{thm:macroLyapunovDensity} are sharper.
\begin{figure}
\setlength{\fwidth}{0.4\textwidth}
\begin{center}
% This file was created by matlab2tikz.
%
%The latest updates can be retrieved from
%  http://www.mathworks.com/matlabcentral/fileexchange/22022-matlab2tikz-matlab2tikz
%where you can also make suggestions and rate matlab2tikz.
%
\begin{tikzpicture}

\begin{axis}[%
width=0.951\fwidth,
height=0.75\fwidth,
at={(0\fwidth,0\fwidth)},
scale only axis,
xmin=0,
xmax=20,
ymin=-20,
ymax=-5,
axis background/.style={fill=white},
title style={font=\bfseries},
axis x line*=bottom,
axis y line*=left,
legend pos=south west
]
\addplot [color=black, line width=1.5pt]
  table[row sep=crcr]{%
0	-5.71790985391542\\
0.559999999999999	-6.06125577000972\\
1.19	-6.43411581020959\\
1.91	-6.84702574026103\\
2.75	-7.31539046016623\\
3.73	-7.84844879834403\\
5.16	-8.60922601191113\\
6.66	-9.39403616460369\\
9.15	-10.6794043303495\\
10.19	-11.2157333286989\\
10.99	-11.6232153125419\\
11.85	-12.0618350906864\\
12.99	-12.6425268830344\\
13.84	-13.0743306097687\\
15.01	-13.6684150219499\\
15.86	-14.0994814253737\\
17.02	-14.6872547542906\\
17.87	-15.1178556931096\\
19.12	-15.7509955278871\\
20	-16.1941805115611\\
};
\addlegendentry{$\wt^{\text{const.}}$};
\addplot [color=gray, line width=1.5pt, forget plot]
  table[row sep=crcr]{%
0	-5.71790985391542\\
20	-15.7179098539154\\
};
\addplot [color=black, dotted, line width=1.5pt]
  table[row sep=crcr]{%
0	-5.71790985391542\\
0.32	-5.97898772373653\\
0.68	-6.25777049941597\\
1.09	-6.56068669737573\\
1.57	-6.90068231780461\\
2.14	-7.2897437359121\\
2.83	-7.74620112113588\\
3.77	-8.35268426976061\\
5.25	-9.29299256886661\\
7.78	-10.9096496399415\\
8.46	-11.3568438219423\\
9.51	-12.0482558595658\\
10.07	-12.4191003787117\\
10.93	-13.0049857100899\\
11.52	-13.4003046221113\\
12.05	-13.7606348177159\\
12.83	-14.3068867107589\\
14.42	-15.4306571484414\\
15.35	-16.1049961339278\\
15.83	-16.4502964569754\\
16.42	-16.8898477522391\\
17.27	-17.5232382813189\\
17.85	-17.946994319133\\
18.43	-18.3877768436433\\
19.25	-19.0108982227992\\
20	-19.5610338692331\\
};
\addlegendentry{$\wt^{\text{lin.}}$};
\addplot [color=gray, dotted, line width=1.5pt, forget plot]
  table[row sep=crcr]{%
0	-5.71790985391542\\
0.34	-5.82813170169827\\
0.699999999999999	-5.93261339433719\\
1.08	-6.03091355558875\\
1.49	-6.12489610160304\\
1.92	-6.21173293513677\\
2.39	-6.29476611704805\\
2.89	-6.37140030563987\\
3.44	-6.44386022793941\\
4.04	-6.5110291271493\\
4.7	-6.57307062242523\\
5.43	-6.62995015785286\\
6.25	-6.68214826724417\\
7.19	-6.73021848604811\\
8.3	-6.77507498082326\\
9.63	-6.81686083815638\\
11.32	-6.85806206371324\\
13.67	-6.90305467107095\\
17.45	-6.96263076280185\\
20	-6.99966244564832\\
};
\addplot [color=black, dashdotted, line width=1.5pt]
  table[row sep=crcr]{%
0	-5.71790985391542\\
0.399999999999999	-6.00371466592889\\
0.850000000000001	-6.31090673435068\\
1.36	-6.64474465420149\\
1.93	-7.00399822469669\\
2.58	-7.4001560695805\\
3.33	-7.84400544071354\\
4.22	-8.35753623307169\\
5.41	-9.03078756136414\\
6.77	-9.7901609483264\\
7.72	-10.3163306531726\\
8.54	-10.7731765232539\\
9.63	-11.3796544065059\\
10.36	-11.7850589938914\\
13.11	-13.3383006987349\\
16.17	-15.0872270023102\\
18.83	-16.6403180519334\\
19.6	-17.0906295190179\\
20	-17.3221698040456\\
};
\addlegendentry{$\wt^{\text{lin2.}}$};
\addplot [color=gray, dashdotted, line width=1.5pt, forget plot]
  table[row sep=crcr]{%
0	-5.71790985391542\\
0.52	-5.93073181614846\\
1.09	-6.15138829209693\\
1.71	-6.37881148520346\\
2.39	-6.61572328172231\\
3.14	-6.86457814759573\\
3.97	-7.12770872980651\\
4.91	-7.41341455769188\\
5.98	-7.72639773941748\\
7.23	-8.07978114248955\\
8.73	-8.49152524560438\\
10.61	-8.99509967990636\\
13.11	-9.65219508883539\\
16.89	-10.6327216465531\\
20	-11.4346425997283\\
};
\addplot [color=black, dashed, line width=1.5pt]
  table[row sep=crcr]{%
0	-5.71790985391542\\
0.359999999999999	-5.99941551651853\\
0.760000000000002	-6.29713726839024\\
1.2	-6.60980783428876\\
1.68	-6.93682573646177\\
2.22	-7.29092091397948\\
2.84	-7.68365582282006\\
3.55	-8.11997950785885\\
4.42	-8.64152046231973\\
5.77	-9.43362579707772\\
11.28	-12.6621974182778\\
12.19	-13.2117249336122\\
13.95	-14.2836809489401\\
14.75	-14.7755359801699\\
15.35	-15.1457833800877\\
17.83	-16.7069519731857\\
18.67	-17.2406929059792\\
19.35	-17.6657403952415\\
20	-18.0761981332185\\
};
\addlegendentry{$\wt^{\text{conc.}}$};
\addplot [color=gray, dashed, line width=1.5pt, forget plot]
  table[row sep=crcr]{%
0	-5.71790985391542\\
0.350000000000001	-5.86059879529619\\
0.710000000000001	-5.99504671761363\\
1.09	-6.12448829372706\\
1.48	-6.24515635014748\\
1.89	-6.35993493541488\\
2.32	-6.46831486209627\\
2.77	-6.56995167800483\\
3.25	-6.66655013019525\\
3.76	-6.75740137257383\\
4.31	-6.84349382988642\\
4.9	-6.9240009066866\\
5.54	-6.99948928119195\\
6.23	-7.06928057643608\\
7	-7.1354006803944\\
7.85	-7.19677787251917\\
8.81	-7.25441579653547\\
9.93	-7.31000930973282\\
11.26	-7.36415522735522\\
12.9	-7.41907314305248\\
15.05	-7.47904017287179\\
18.2	-7.55449762779419\\
20	-7.59450258098102\\
};
\end{axis}

\end{tikzpicture}%
% This file was created by matlab2tikz.
%
%The latest updates can be retrieved from
%  http://www.mathworks.com/matlabcentral/fileexchange/22022-matlab2tikz-matlab2tikz
%where you can also make suggestions and rate matlab2tikz.
%
\begin{tikzpicture}

\begin{axis}[%
width=0.951\fwidth,
height=0.75\fwidth,
at={(0\fwidth,0\fwidth)},
scale only axis,
xmin=0,
xmax=20,
ymin=-20,
ymax=-5,
axis background/.style={fill=white},
title style={font=\bfseries},
axis x line*=bottom,
axis y line*=left
]
\addplot [color=black, line width=1.5pt, forget plot]
  table[row sep=crcr]{%
0	-5.70176560894742\\
0.0199999999999996	-5.71973016023912\\
0.640000000000001	-6.13326464692381\\
1.24	-6.51058736058809\\
1.82	-6.86005712312295\\
2.52	-7.26825559215598\\
3.35	-7.73876181452612\\
4.33	-8.28081248262547\\
5.47	-8.89805217286008\\
6.77	-9.58873125126827\\
8.21	-10.3406155841329\\
9.77	-11.142128682219\\
11.49	-12.0127329571687\\
13.38	-12.9561246888612\\
15.44	-13.9713032241804\\
17.7	-15.071936315944\\
20	-16.1804788415236\\
};
\addplot [color=gray, line width=1.5pt, forget plot]
  table[row sep=crcr]{%
0	-5.70176560894742\\
20	-15.7017656089474\\
};
\addplot [color=black, dotted, line width=1.5pt, forget plot]
  table[row sep=crcr]{%
0	-5.70346720195187\\
0.0300000000000011	-5.73534955024023\\
0.190000000000001	-5.87964662739675\\
0.370000000000001	-6.0326092134091\\
0.670000000000002	-6.27559082335443\\
1.03	-6.55195808684185\\
1.47	-6.87504824754798\\
1.99	-7.24224768922502\\
2.64	-7.68638390979481\\
3.46	-8.23199089504353\\
4.59	-8.96961393374382\\
8.85	-11.738791738087\\
10.43	-12.7853896659361\\
11.95	-13.8058412031616\\
13.55	-14.8940377326736\\
15.2	-16.0303621636132\\
16.95	-17.249575767265\\
18.84	-18.5804643619771\\
20	-19.4036666707076\\
};
\addplot [color=gray, dotted, line width=1.5pt, forget plot]
  table[row sep=crcr]{%
0	-5.70346720195187\\
0.300000000000001	-5.79811720195188\\
0.620000000000001	-5.88821720195187\\
0.989999999999998	-5.98051720195187\\
1.36	-6.06176720195187\\
1.79	-6.14426720195187\\
2.21	-6.21416720195187\\
2.7	-6.28441720195188\\
3.28	-6.35416720195187\\
3.84	-6.41016720195188\\
4.53	-6.46661720195188\\
5.15	-6.50751720195187\\
5.94	-6.54926720195187\\
6.98	-6.59041720195187\\
7.9	-6.61691720195187\\
9.22	-6.64426720195187\\
11.36	-6.67066720195186\\
13.26	-6.68396720195186\\
20	-6.71766720195183\\
};
\addplot [color=black, dashdotted, line width=1.5pt, forget plot]
  table[row sep=crcr]{%
0	-5.70282411150368\\
0.0199999999999996	-5.7231293184297\\
0.350000000000001	-5.9810673517169\\
0.949999999999999	-6.41216539268109\\
1.4	-6.71649922214549\\
1.93	-7.06075435787353\\
2.54	-7.4432031451985\\
3.26	-7.88099087113645\\
4.12	-8.39029933186514\\
5.2	-9.01579326730993\\
6.6	-9.81224711491306\\
8.61	-10.9414860012389\\
14.73	-14.374358579581\\
17.44	-15.9116100821904\\
20	-17.3755763373519\\
};
\addplot [color=gray, dashdotted, line width=1.5pt, forget plot]
  table[row sep=crcr]{%
0	-5.70282411150368\\
0.460000000000001	-5.88892411150368\\
1.01	-6.09884911150368\\
1.62	-6.31854911150368\\
2.29	-6.54709911150368\\
3.02	-6.78382411150368\\
3.79	-7.02239911150368\\
4.78	-7.31599911150368\\
5.89	-7.63199911150368\\
7.1	-7.96492411150367\\
8.86	-8.43427411150368\\
11.06	-9.00587411150369\\
13.89	-9.72869911150365\\
20	-11.2714741115037\\
};
\addplot [color=black, dashed, line width=1.5pt, forget plot]
  table[row sep=crcr]{%
0	-5.70316390614661\\
0.0300000000000011	-5.73360170328806\\
0.23	-5.90607105467494\\
0.510000000000002	-6.13348707632656\\
0.91	-6.4411500279749\\
1.43	-6.81844498731705\\
1.91	-7.15051475828348\\
2.47	-7.52362136728949\\
3.11	-7.93615495907046\\
3.89	-8.424758361188\\
4.86	-9.01804456661424\\
6.22	-9.834764383069\\
8.96	-11.4638332584228\\
11.46	-12.9570589399425\\
13.63	-14.2663733347865\\
15.82	-15.6010582511768\\
18.15	-17.0346242329669\\
20	-18.1814877634641\\
};
\addplot [color=gray, dashed, line width=1.5pt, forget plot]
  table[row sep=crcr]{%
0	-5.70316390614661\\
0.309999999999999	-5.82669828114661\\
0.629999999999999	-5.94317871864661\\
1.01	-6.06817896864661\\
1.38	-6.17753159364661\\
1.75	-6.27591453114661\\
2.17	-6.37588428114661\\
2.65	-6.47621459364661\\
3.11	-6.56026915614661\\
3.65	-6.64592365614661\\
4.11	-6.70896434364661\\
4.66	-6.77424546864661\\
5.33	-6.84121728114661\\
6.15	-6.90706821864661\\
6.83	-6.95107034364661\\
7.69	-6.99592990614661\\
8.86	-7.04217284364662\\
10.5	-7.08520690614662\\
11.74	-7.10719484364661\\
13.77	-7.13183534364661\\
20	-7.17844353114661\\
};
\end{axis}

\end{tikzpicture}%
\end{center}
\caption{Microscopic and macroscopic Lyapunov functions (black) over time together with the exponential upper bounds (gray) for different kernel functions.}\label{fig:lyapunov}
\end{figure}
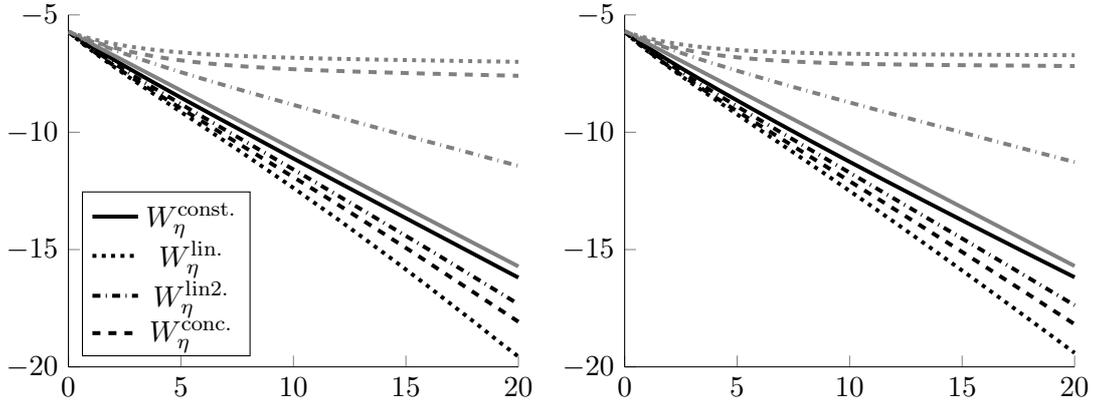
\begin{remark}
Note that the exponential bounds strongly depend on the values of $\rho_{\min}$ and hence on the choice of the initial conditions. Using the same parameters as before, but consider the constant kernel function $\wt^{\text{const.}}$ and  initial data 
\begin{align*}
\rho_0(x)=\begin{cases}
0.5,&\text{if }x<0,\\
0.025,&\text{if }x\in[0,0.75],\\
0.05,&\text{if }x>0.75.
\end{cases}
\end{align*}
we observe the Lyapunov function and its upper bound in Figure \ref{fig:lyapunovlowdensity}, left.
In particular, it is obvious that the bound is not  sharp, e.g., by comparing Figure \ref{fig:lyapunovlowdensity} to Figure \ref{fig:lyapunov}.
Figure \ref{fig:lyapunovlowdensity}, right, shows the Lyapunov function of \rv{Theorem \ref{thm:microconstant}} for all cars $i=0,\dots,N-1$.
This function has an increasing part at the beginning which demonstrates that generalizing the results of \rv{Theorem \ref{thm:microconstant}} to all cars cannot be expected.
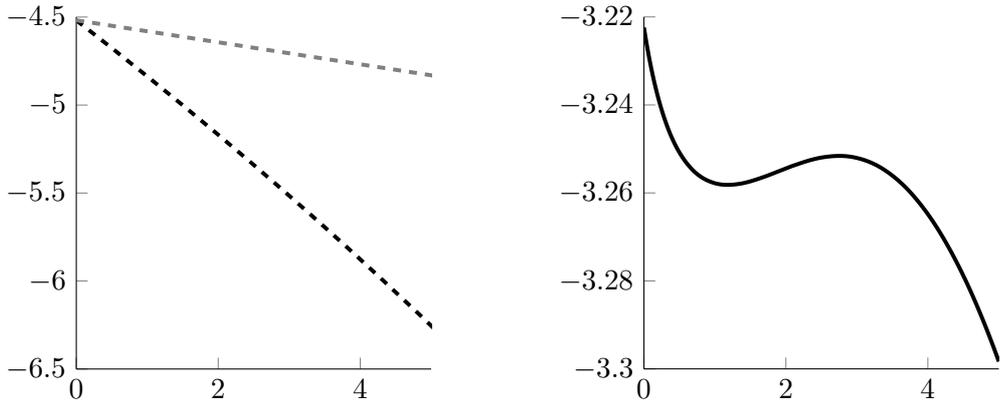
\begin{figure}
\setlength{\fwidth}{0.6\textwidth}
\begin{center}
% This file was created by matlab2tikz.
%
%The latest updates can be retrieved from
%  http://www.mathworks.com/matlabcentral/fileexchange/22022-matlab2tikz-matlab2tikz
%where you can also make suggestions and rate matlab2tikz.
%
\begin{tikzpicture}

\begin{axis}[%
width=0.5\fwidth,
height=0.5\fwidth,
at={(0\fwidth,0\fwidth)},
scale only axis,
xmin=0,
xmax=5,
ymin=-6.5,
ymax=-4.5,
axis background/.style={fill=white},
axis x line*=bottom,
axis y line*=left
]
\addplot [color=black, dashed, line width=1.5pt, forget plot]
  table[row sep=crcr]{%
0	-4.51842500173924\\
0.51	-4.67938214828871\\
1.02	-4.84359538676269\\
1.5	-5.00127508693059\\
1.98	-5.16214743792667\\
2.46	-5.32631583062387\\
2.92	-5.48687523360515\\
3.4	-5.65781649929966\\
3.89	-5.83590364408169\\
4.32	-5.9958164970479\\
4.83	-6.18866384668959\\
5.28	-6.36270005035785\\
5.77	-6.55522829804278\\
6.26	-6.75183596969991\\
6.64	-6.90643673716858\\
7.14	-7.11349194943088\\
7.52	-7.27283923556715\\
8.05	-7.49876242418608\\
8.45	-7.67143385448647\\
9.06	-7.9386188170809\\
9.51	-8.13870782043685\\
10	-8.35865252584326\\
};
\addplot [color=gray, dashed, line width=1.5pt, forget plot]
  table[row sep=crcr]{%
0	-4.51842500173924\\
10	-5.14342500173924\\
};

\end{axis}

\begin{axis}[%
width=0.5\fwidth,
height=0.5\fwidth,
at={(0.8\fwidth,0\fwidth)},
scale only axis,
xmin=0,
xmax=5,
ymin=-3.3,
ymax=-3.22,
axis background/.style={fill=white},
axis x line*=bottom,
axis y line*=left
]
\addplot [color=black, line width=1.5pt, forget plot]
  table[row sep=crcr]{%
0	-3.22241205282565\\
0.0199999999999996	-3.2244250416479\\
0.04	-3.22632891154836\\
0.0599999999999996	-3.22813029935643\\
0.0800000000000001	-3.22983690677248\\
0.0999999999999996	-3.23145248128295\\
0.12	-3.2329822315814\\
0.14	-3.23443110850047\\
0.16	-3.23580380922566\\
0.18	-3.23710478140481\\
0.2	-3.23833822714845\\
0.22	-3.23950810691374\\
0.24	-3.24061814326366\\
0.26	-3.24167182449136\\
0.28	-3.24267240809839\\
0.3	-3.24362292411462\\
0.33	-3.24496648472301\\
0.35	-3.245805529495\\
0.37	-3.24659989581936\\
0.39	-3.24735194594805\\
0.42	-3.24840558791458\\
0.45	-3.24937625946206\\
0.48	-3.25027064063653\\
0.51	-3.25109491978334\\
0.54	-3.25185480750805\\
0.57	-3.25255555009032\\
0.6	-3.25320194232193\\
0.63	-3.25379833974089\\
0.67	-3.25452723159323\\
0.7	-3.2550175611351\\
0.73	-3.25545984615231\\
0.77	-3.25598099105554\\
0.81	-3.25643098266349\\
0.85	-3.25681707219246\\
0.89	-3.25714587577706\\
0.94	-3.25748540848776\\
0.99	-3.25775746944561\\
1.03	-3.25792818844135\\
1.08	-3.25807395067934\\
1.13	-3.25815361845973\\
1.18	-3.25817614378315\\
1.24	-3.25813922052248\\
1.3	-3.25804450735144\\
1.37	-3.25787616288057\\
1.43	-3.25767027884209\\
1.5	-3.25737130850308\\
1.59	-3.25692013898285\\
1.72	-3.25619831887099\\
1.83	-3.25551518317246\\
2.08	-3.25393752902628\\
2.25	-3.25300136443703\\
2.34	-3.25256208565433\\
2.42	-3.25222817573802\\
2.49	-3.25199158955255\\
2.57	-3.25179484212588\\
2.65	-3.25165699419651\\
2.72	-3.25159439751238\\
2.79	-3.25159488119097\\
2.85	-3.25165166770236\\
2.91	-3.25176566523473\\
2.97	-3.25194474619288\\
3.04	-3.25221648687257\\
3.1	-3.25250663268999\\
3.16	-3.25285425734756\\
3.22	-3.25326362779021\\
3.28	-3.25373887883252\\
3.34	-3.25428401842447\\
3.4	-3.25490143374757\\
3.46	-3.25557863565487\\
3.52	-3.25631908785394\\
3.58	-3.2571263826195\\
3.64	-3.25800402564979\\
3.69	-3.25879160533927\\
3.74	-3.25963235639472\\
3.79	-3.26052917086312\\
3.85	-3.26166981812096\\
3.91	-3.26287848377665\\
3.97	-3.26415816456705\\
4.02	-3.26528093807398\\
4.07	-3.26645674945634\\
4.12	-3.26768725254427\\
4.17	-3.26897407939418\\
4.22	-3.27031901606795\\
4.28	-3.27200006998115\\
4.33	-3.2734565774093\\
4.38	-3.2749652383708\\
4.43	-3.27652752105959\\
4.48	-3.27814488080262\\
4.53	-3.27981876095606\\
4.58	-3.28155059376965\\
4.63	-3.283343056182\\
4.68	-3.28519213856917\\
4.73	-3.28709396380284\\
4.78	-3.2890498394994\\
4.83	-3.29106106826051\\
4.88	-3.29312894845886\\
4.93	-3.2952547750129\\
4.98	-3.29743984015006\\
5	-3.29833074208074\\
};
\end{axis}

\end{tikzpicture}%
\end{center}
\caption{Example in which the exponential bound is not so sharp due to a low value of $\rho_{\min}$, left, and the Lyapunov function for all cars, right.}\label{fig:lyapunovlowdensity}
\end{figure}
\end{remark}
Finally, we consider the following example
\begin{align*}
\rho_0(x)=\begin{cases}
0.5,&\text{if }x<0,\\
0.3,&\text{if }x\in[0,0.5],\\
0.4,&\text{if }x>0.5,
\end{cases}
\qquad\text{and}\qquad
\lm_0(x)=\rv{\begin{cases}
1,&\text{if }x<0,\\
\frac{5}{8},&\text{if }x\in[0,0.5],\\
\frac{3}{4},&\text{if }x>0.5.
\end{cases}}
\end{align*}
with $\ndt=2$ and an initial placement of the cars in $[-5,1]$.
Note that in this situation our stabilization results hold only for the constant kernel. 
Nevertheless, the convergence towards the steady state in the micro- and macroscopic scale is also obtained for concave kernels and even convex ones, i.e.
\[\wt^{\text{conv.}}=3\frac{(\ndt-x)^2}{\ndt^3}.\]
This is shown in Figure \ref{fig:lyapunovnonlinear}.
Even though not covered by our theoretical results, the bounds recovered in Theorems \ref{thm:microconcave} and \ref{thm:macroLyapunovDensity} are valid for the considered example and the linear kernel, too.
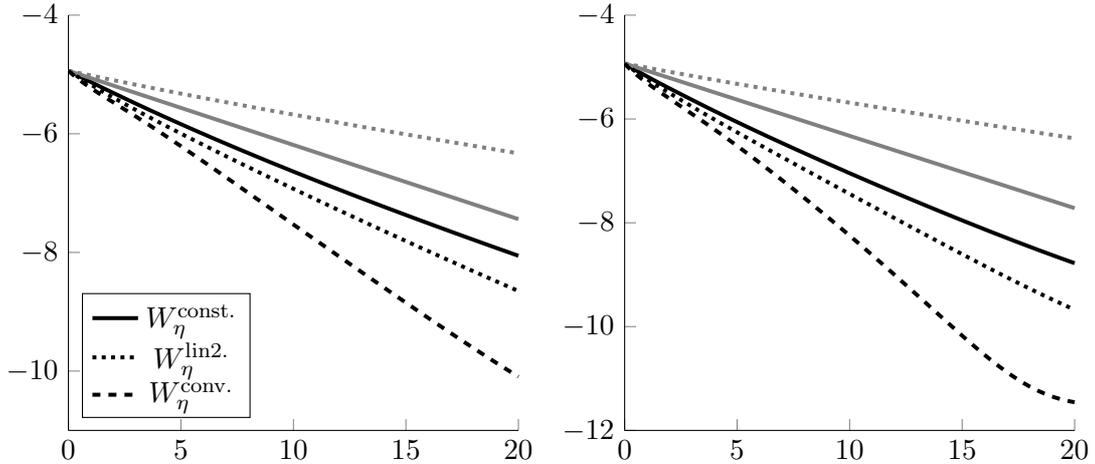
\begin{figure}
\setlength{\fwidth}{0.4\textwidth}
\begin{center}
% This file was created by matlab2tikz.
%
%The latest updates can be retrieved from
%  http://www.mathworks.com/matlabcentral/fileexchange/22022-matlab2tikz-matlab2tikz
%where you can also make suggestions and rate matlab2tikz.
%
\begin{tikzpicture}

\begin{axis}[%
width=0.951\fwidth,
height=0.885\fwidth,
at={(0\fwidth,0\fwidth)},
scale only axis,
xmin=0,
xmax=20,
ymin=-11,
ymax=-4,
axis background/.style={fill=white},
title style={font=\bfseries},
axis x line*=bottom,
axis y line*=left,
legend pos= south west
]
\addplot [color=black, line width=1.5pt]
  table[row sep=crcr]{%
0	-4.93812244027054\\
0.82	-5.09453220210782\\
1.65	-5.24897021363543\\
2.64	-5.42848894800811\\
3.53	-5.58592813430852\\
4.51	-5.75491919058814\\
5.49	-5.92018662658454\\
6.55	-6.0945963923888\\
7.6	-6.26366814681888\\
8.89	-6.46653815659178\\
10.01	-6.63867579894375\\
11.33	-6.83758605774509\\
12.65	-7.03217765352732\\
13.96	-7.22162423833255\\
15.48	-7.43741659413248\\
17.24	-7.68213922442983\\
18.85	-7.90166436235328\\
20	-8.0561123641801\\
};
\addlegendentry{$\wt^{\text{const.}}$};
\addplot [color=gray, line width=1.5pt, forget plot]
  table[row sep=crcr]{%
0	-4.93812244027054\\
20	-7.43812244027054\\
};
\addplot [color=black, dotted, line width=1.5pt]
  table[row sep=crcr]{%
0	-4.93812244027054\\
0.440000000000001	-5.043471830159\\
0.93	-5.15662031621604\\
1.45	-5.27256565619699\\
2.08	-5.40838266420244\\
2.74	-5.5465377256829\\
3.46	-5.69312990938323\\
4.34	-5.86813265167935\\
5.27	-6.04896201182466\\
6.39	-6.262610991406\\
7.57	-6.48349243644982\\
8.81	-6.71184190667011\\
10.24	-6.97112590634735\\
11.88	-7.26371550558424\\
13.68	-7.57955722982259\\
15.53	-7.89898632581023\\
17.35	-8.20842856373357\\
19.12	-8.50483376145859\\
20	-8.65039078453273\\
};
\addlegendentry{$\wt^{\text{lin2.}}$};
\addplot [color=gray, dotted, line width=1.5pt, forget plot]
  table[row sep=crcr]{%
0	-4.93812244027054\\
1.58	-5.06399378098207\\
3.19	-5.18825786027553\\
4.88	-5.31468737009482\\
6.68	-5.44533026141655\\
8.63	-5.58282458926583\\
10.76	-5.72897170475055\\
13.12	-5.88684688318336\\
15.76	-6.05939470125379\\
18.76	-6.25139748518513\\
20	-6.32973405668715\\
};
\addplot [color=black, dashed, line width=1.5pt]
  table[row sep=crcr]{%
0	-4.93812244027054\\
0.170000000000002	-4.99541780605366\\
0.350000000000001	-5.0519022804783\\
0.550000000000001	-5.1105023826906\\
0.780000000000001	-5.17351845814759\\
1.06	-5.24587130440668\\
1.39	-5.32696066807091\\
1.99	-5.4694139495694\\
2.91	-5.6875538863228\\
3.61	-5.85810597003239\\
4.35	-6.04319471021263\\
5.17	-6.25290787430878\\
6.16	-6.51079267877669\\
7.48	-6.85913210473375\\
10.42	-7.64059324237785\\
13.01	-8.32640249927348\\
14.22	-8.64404236701441\\
15.77	-9.04734732212723\\
17.34	-9.44776505469826\\
18.07	-9.62965160379456\\
19	-9.85739141820407\\
19.67	-10.0174530416938\\
20	-10.0945418569086\\
};
\addlegendentry{$\wt^{\text{conv.}}$};

\end{axis}

\end{tikzpicture}%
% This file was created by matlab2tikz.
%
%The latest updates can be retrieved from
%  http://www.mathworks.com/matlabcentral/fileexchange/22022-matlab2tikz-matlab2tikz
%where you can also make suggestions and rate matlab2tikz.
%
\begin{tikzpicture}

\begin{axis}[%
width=0.951\fwidth,
height=0.885\fwidth,
at={(0\fwidth,0\fwidth)},
scale only axis,
xmin=0,
xmax=20,
ymin=-12,
ymax=-4,
axis background/.style={fill=white},
title style={font=\bfseries},
axis x line*=bottom,
axis y line*=left
]
\addplot [color=black, line width=1.5pt, forget plot]
  table[row sep=crcr]{%
0	-4.93213223260831\\
0.329999999999998	-5.01928784082877\\
0.73	-5.11806025553611\\
1.22	-5.23441111225699\\
1.76	-5.35783348747391\\
2.87	-5.60441800193081\\
3.74	-5.79444136604529\\
4.56	-5.96877603818045\\
5.46	-6.15533488620917\\
6.46	-6.35788689284712\\
7.55	-6.57398589868478\\
8.72	-6.8013040149629\\
9.96	-7.03758465336777\\
11.25	-7.27873723131078\\
12.55	-7.51714219255062\\
13.83	-7.7473121022471\\
15.07	-7.96573064218883\\
16.25	-8.16902880902243\\
17.36	-8.35573881348131\\
18.41	-8.52779313862888\\
19.39	-8.68385122050383\\
20	-8.77853842930963\\
};
\addplot [color=gray, line width=1.5pt, forget plot]
  table[row sep=crcr]{%
0	-4.93213223260831\\
20	-7.71830159346601\\
};
\addplot [color=black, dotted, line width=1.5pt, forget plot]
  table[row sep=crcr]{%
0	-4.93213223260831\\
0.190000000000001	-4.99575139882381\\
0.460000000000001	-5.07896506724748\\
0.789999999999999	-5.17577739432837\\
1.16	-5.27952216027723\\
1.57	-5.38980852040376\\
2.54	-5.64469500195519\\
3.26	-5.82863448650628\\
4.05	-6.02564393864147\\
5.01	-6.2602917158761\\
6.24	-6.55597570817072\\
7.91	-6.95246332427947\\
10.27	-7.50782705649187\\
12.73	-8.08234802314485\\
14.44	-8.47735615747582\\
15.71	-8.76625278798003\\
16.74	-8.99604126959145\\
17.62	-9.18782161393363\\
18.4	-9.35321782786291\\
19.1	-9.49708491015527\\
19.74	-9.62410673953849\\
20	-9.67431780334902\\
};
\addplot [color=gray, dotted, line width=1.5pt, forget plot]
  table[row sep=crcr]{%
0	-4.93213223260831\\
1.59	-5.0608114529455\\
3.3	-5.19475914174809\\
5.17	-5.33670403492531\\
7.29	-5.49292374384528\\
9.61	-5.65940532231622\\
12.53	-5.86409089104609\\
16.16	-6.11358712974086\\
20	-6.37416537753633\\
};
\addplot [color=black, dashed, line width=1.5pt, forget plot]
  table[row sep=crcr]{%
0	-4.93213223260831\\
0.0899999999999999	-4.97559136494243\\
0.219999999999999	-5.03142969633007\\
0.379999999999999	-5.0948270581914\\
0.559999999999999	-5.16091654422154\\
0.760000000000002	-5.22931464023914\\
1.05	-5.32281428957064\\
1.4	-5.43118767106367\\
1.86	-5.56847225434471\\
3.68	-6.1067519694846\\
4.3	-6.29748666704301\\
4.92	-6.49293955145476\\
5.56	-6.6994548586907\\
6.22	-6.91718147096293\\
6.9	-7.14625421615968\\
7.6	-7.38685391604561\\
8.31	-7.63569443075036\\
9.03	-7.89289628288945\\
9.75	-8.15494961516931\\
10.47	-8.42180597537389\\
11.21	-8.70094253761096\\
11.99	-9.00012818853482\\
12.87	-9.3427025649279\\
15.04	-10.1901114248165\\
15.53	-10.3747597233028\\
15.94	-10.5244690502643\\
16.3	-10.6510868492461\\
16.62	-10.7589147100035\\
16.92	-10.8552191899145\\
17.2	-10.9403208507664\\
17.46	-11.0147362461104\\
17.71	-11.0817304419887\\
17.96	-11.1439426856206\\
18.2	-11.1989303030965\\
18.43	-11.2471126393773\\
18.66	-11.2907789614738\\
18.89	-11.3298899654292\\
19.12	-11.3644646593247\\
19.35	-11.3945794947976\\
19.59	-11.4213905475568\\
19.83	-11.443692680075\\
20	-11.4568986996436\\
};

\end{axis}
\end{tikzpicture}%
\end{center}
\caption{The considered Lyapunov functions (black) together with the exponential upper bounds (gray) for the microscopic and macroscopic scale. The assumptions are violated for the kernels $\wt^{\text{lin2.}}$ and $\wt^{\text{conv.}}$.}\label{fig:lyapunovnonlinear}
\end{figure}
}

\section{Conclusion}
In this work, we have presented suitable Lyapunov functions and explicit rates such that a nonlocal second-order model on a single road tends to its equilibrium state.
We have considered the microscopic and macroscopic scales and the rates for both scales coincide.
For the theoretical analysis, we had to restrict ourselves to a constant kernel function \rv{or a concave kernel with monotone initial data}.
Nevertheless, numerical examples suggest that the asymptotic stabilization effect can be obtained for all cars and also in the case of \rv{convex} kernels.
Future work may include extending the obtained results to those cases.
\section*{Acknowledgment}
The authors thank the Deutsche Forschungsgemeinschaft (DFG, German Research Foundation) for the financial support through 320021702/GRK2326,  333849990/IRTG-2379, B04, B05 and B06 of 442047500/SFB1481, HE5386/18-1,19-2,22-1,23-1,25-1, GO 1920/10-1, ERS SFDdM035 and under Germany’s Excellence Strategy EXC-2023 Internet of Production 390621612 and under the Excellence Strategy of the Federal Government and the Länder.

\begin{appendix}
\rv{
\section{Proof of the maximum principle for monotone initial data and concave kernel functions}
\begin{lemma}\label{lem:maxappendix}
Let the initial placement of cars and the equilibrium velocity $\bar v$ be chosen such that $J\in\{0,\dots,N-1\}$ being the smallest integer satisfying \eqref{eq:sufficient} exists.
Further, let Assumption \ref{ass:vprime} hold and the dynamics given by equation \eqref{eq:microGARZ}.
We assume that the kernel function is concave, $\lm_i=\lm$ for $i=J,\ldots,N-1$ and the initial datum satisfies either $y_J(0)\geq\dots\geq y_{N-1}(0)\geq \bar L$ or $y_J(0)\leq\dots\leq y_{N-1}(0)\leq \bar L$ with $v(\bar L,\lm)=\bar v$.
Then, the maximum principle \eqref{eq:maxmicro} holds.
\end{lemma}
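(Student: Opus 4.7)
The plan is to follow the three steps outlined in the main text, treating the monotone increasing case $y_J(0)\leq\dots\leq y_{N-1}(0)\leq \bar L$; the decreasing case is symmetric with inequalities reversed throughout.

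For Step 1, I argue by a backward induction from $i=N-1$ downward that $y_i(t)\leq \bar L$. The base case $i=N-1$ is the explicit formula \eqref{eq:dNm1}. For the inductive step, suppose $y_j(t)\leq \bar L$ for all $j>i$ and all $t\geq 0$, and assume by contradiction that $t^*$ is the earliest time with $y_i(t^*)=\bar L$. Plugging into \eqref{eq:dygeneral}, the last term vanishes because $v(1/(N\bar L),\lm)=\bar v$. Each summand $(\gamma_{i+1,j-1}(t^*)-\gamma_{i,j}(t^*))(v(1/(Ny_{i+j}(t^*)),\lm)-\bar v)$ is a product of a nonnegative factor and a nonpositive one: the $v$-factor is nonpositive by the induction hypothesis combined with $\partial_\rho v<0$, while the weight difference is nonnegative since the interval defining $\gamma_{i+1,j-1}$ is the one defining $\gamma_{i,j}$ shifted to the left by $y_i$, where the nonincreasing kernel $\twt$ takes larger values. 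Hence $\tfrac{d}{dt}y_i(t^*)\leq 0$, which prevents $y_i$ from crossing $\bar L$.

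For Step 2, I show that the initial order is preserved along the flow. Setting $z_i(t):=y_{i+1}(t)-y_i(t)$ and proceeding again by a backward induction in $i$, I look at the first time $t^*$ at which $z_i(t^*)=0$ for some $i\in\{J,\ldots,N-2\}$. Subtracting the two instances of \eqref{eq:dygeneral} for $y_{i+1}$ and $y_i$ at $t^*$, the terms containing $v(1/(Ny_i(t^*)),\lm)=v(1/(Ny_{i+1}(t^*)),\lm)$ cancel, and one is left with linear combinations of weights of the form $\gamma_{i+2,j-1}-\gamma_{i+1,j-1}-\gamma_{i+1,j}+\gamma_{i,j}$. Rewriting each $\gamma$ as an integral of $\twt$, these combinations are second differences of the primitive of $\twt$ along the grid of car positions, and carry a definite sign thanks to the concavity of $\wt$. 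Together with the upper bound $y_{i+k}(t^*)\leq \bar L$ from Step 1 and the inductive hypothesis $z_{i+1}(t^*)\geq 0$, this yields $\tfrac{d}{dt}z_i(t^*)\geq 0$, so monotonicity cannot be broken.

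For Step 3, I show the lower bound $y_i(t)\geq y_i(0)$. I rewrite the right-hand side of \eqref{eq:dygeneral} by introducing $v(1/(Ny_i(t)),\lm)$ as a pivot,
\begin{align*}
\frac{d}{dt}y_i(t) = & \sum_{j=1}^{N-1-i}(\gamma_{i+1,j-1}-\gamma_{i,j})\bigl(v(\tfrac{1}{Ny_{i+j}(t)},\lm)-v(\tfrac{1}{Ny_{i}(t)},\lm)\bigr) \\
& + \Bigl(\sum_{j=1}^{N-1-i}\gamma_{i+1,j-1}-\sum_{j=0}^{N-1-i}\gamma_{i,j}\Bigr)\bigl(v(\tfrac{1}{Ny_i(t)},\lm)-\bar v\bigr).
\end{align*}
The first sum is nonnegative: each weight difference is nonnegative by the nonincreasingness of $\wt$, and by Step 2 $y_{i+j}\geq y_i$, hence the $v$-differences are nonnegative since $\partial_\rho v<0$. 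The prefactor in the second line equals $-\gamma_{i,0}-\sum_{j=1}^{N-1-i}(\gamma_{i,j}-\gamma_{i+1,j-1})\leq 0$, and it multiplies the nonpositive quantity $v(1/(Ny_i(t)),\lm)-\bar v\leq 0$ (by Step 1 and $\partial_\rho v<0$). Thus $\tfrac{d}{dt}y_i(t)\geq 0$, so $y_i(t)\geq y_i(0)$.

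I expect Step 2 to be the main obstacle: the cascade of weight differences must be reorganized as second differences of the primitive of $\twt$ and signed by concavity, and it is precisely here that the assumption of a concave kernel is unavoidable. Once monotonicity is in place, Steps 1 and 3 combine to give the announced interval confinement \eqref{eq:maxmicro}, completing the proof.
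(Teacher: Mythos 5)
Your proof follows the same three-step architecture as the paper's and uses the same key ingredients: the nonincreasingness of $\wt$ to sign the first differences $\gamma_{i+1,j-1}-\gamma_{i,j}$ in Steps~1 and~3, the concavity of $\wt$ to sign the second differences of the weights in Step~2, and the pivot $v(1/(Ny_i(t)),\lm)$ in Step~3 to isolate a nonnegative sum plus a term comparable to $\bar v - v(1/(Ny_i(t)),\lm)$. (Minor bookkeeping: after shifting the summation index, the relevant combination in Step~2 is $\gamma_{i+2,j-2}-2\gamma_{i+1,j-1}+\gamma_{i,j}$, all three integrating over the same interval $[x_{i+j},x_{i+j+1}]$; the combination you wrote mixes two adjacent intervals.)

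The one point that needs repair is your use of first-crossing barrier arguments in Steps~1 and~2. Establishing the weak inequality $\frac{d}{dt}y_i(t^*)\leq 0$ at the first time $t^*$ with $y_i(t^*)=\bar L$ does not by itself prevent $y_i$ from exceeding $\bar L$ afterwards: the function $u(t)=(t-1)^3$ with barrier $0$ satisfies $u<0$ for $t<1$, $u(1)=0$, $\dot u(1)=0$, yet crosses. Your estimates in fact yield more, namely a linear differential inequality $\frac{d}{dt}(y_i(t)-\bar L)\leq C(t)\,(y_i(t)-\bar L)$ (via the mean value theorem applied to $v$ and to $\gamma_{i,0}$), and similarly $\frac{d}{dt}(y_{i+1}(t)-y_i(t))\geq C(t)\,(y_{i+1}(t)-y_i(t))$ in Step~2; applying Gr\"onwall to these, as the paper does, closes each step cleanly and makes the induction airtight. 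A second small gap: in Step~3 you assert that the prefactor $\sum_{j=1}^{N-1-i}\gamma_{i+1,j-1}-\sum_{j=0}^{N-1-i}\gamma_{i,j}$ is nonpositive from a decomposition whose two parts have opposite signs; the clean justification is the change of variables showing this prefactor equals $-\int_{x_i(t)}^{x_{i+1}(t)}\twt(x_N(t)-y)\,dy\leq 0$.
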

\begin{proof}
Due to the assumption $\lm_i=\lm$ and for readability, we will drop the dependence of $v$ on $\lm$ in the following.
As the proof works completely analogously, we concentrate on the case $y_J(0)\leq\dots\leq y_{N-1}(0)\leq \bar L$.
Further, we assume $J\geq \mathcal{J}$.
As already outlined in the proof of Lemma \ref{lem:max} we start by proving the upper bound $\bar L$ on $y_i(t)$.
Recall that in the setting of Lemma \ref{lem:max} we prove the claim by induction.
The base case is already done in the proof of Lemma \ref{lem:max}.
Hence, we proceed with the induction step and suppose that $y_j(t)\leq \bar L$ holds for $j>i$ and therefore $v\left(\frac{1}{Ny_{j}(t)}\right)\leq\bar v$.
Further, $\gamma_{i+1,j-1}(t)-\gamma_{i,j}(t)\geq 0$ holds due to the definition of the weights and the monotonicity of the kernel function $\wt$.
We deduce from \eqref{eq:dygeneral} 
\begin{align}\label{eq:firstterm}
    \frac{d}{dt}(y_i(t)-\bar L)\leq -\gamma_{i,0}(t)\left(v\left(\frac{1}{Ny_{i}(t)}\right)-\bar v\right)=\wt(\zeta_i{(t)})\,\partial_\rho v(\xi_{i}(t)) \frac{1}{N\bar L}\left(y_{i}(t)-\bar L\right)
\end{align}
where we used $\gamma_{i,0}(t)=\int_0^{y_{i}(t)} \wt(y)dy=\wt(\zeta_{i}(t))y_{i}(t)$ and once more the mean value theorem for $v$.
By Grönwall's inequality we obtain
\begin{align*}
        y_{i}(t)-\bar L\leq\left(y_{i}(0)-\bar L\right)\exp\left(\frac{\int_0^t \wt(\zeta_{i}(s))\partial_\rho v(\xi_{i}(s)) ds}{N\bar L}\right)\leq 0.
\end{align*}
This leads to the desired upper bound $y_i(t)\leq \bar L$. To prove the lower bound, we first prove that the monotonicity of the initial data is kept for $t>0$, if  $y_i(t)\leq \bar L$ holds.
Again, we consider the time derivative
\begin{align*}
    &\frac{d}{dt}\left(y_{i+1}(t)-y_i(t)\right)\\
    =&\sum_{j=1}^{N-2-i} (\gamma_{i+2,j-1}(t)-\gamma_{i+1,j}(t))\left(v\left(\frac{1}{Ny_{i+j+1}(t)}\right)-\bar v\right)-\gamma_{i+1,0}(t)\left(v\left(\frac{1}{Ny_{i+1}\rv{(t)}}\right)-\bar v\right)\\
    &-\sum_{j=1}^{N-1-i} (\gamma_{i+1,j-1}(t)-\gamma_{i,j}(t))\left(v\left(\frac{1}{Ny_{i+j}\rv{(t)}}\right)+\bar v\right)+\gamma_{i,0}(t)\left(v\left(\frac{1}{Ny_{i}\rv{(t)}}\right)-\bar v\right)\\
    =&\gamma_{i,0}(t)\left(v\left(\frac{1}{Ny_{i}(t)}\right)-\bar v\right)-\gamma_{i+1,0}(t)\left(v\left(\frac{1}{Ny_{i+1}(t)}\right)-\bar v\right)\\ &-(\gamma_{i+1,0}(t)-\gamma_{i,1}(t))\left(v\left(\frac{1}{Ny_{i+1}(t)}\right)-\bar v\right)\\
    &+\sum_{j=2}^{N-1-i} (\gamma_{i+2,j-2}-2\gamma_{i+1,j-1}(t)+\gamma_{i,j}(t))\left(v\left(\frac{1}{Ny_{i+j}(t)}\right)-\bar v\right).
    \intertext{Due to $\gamma_{i+1,0}(t)\geq \gamma_{i,0}(t)$ and $v\left(\frac{1}{Ny_{i+1}(t)}\right)\leq\bar v$ we can drop the third term. Further, we add a zero and obtain}
    \geq &\gamma_{i,0}(t)\left(v\left(\frac{1}{Ny_{i}(t)}\right)-v\left(\frac{1}{Ny_{i+1}(t)}\right)\right)+(\gamma_{i,0}(t)-\gamma_{i+1,0}(t))\left(v\left(\frac{1}{Ny_{i+1}(t)}\right)-\bar v\right)\\
    &+\sum_{j=2}^{N-1-i} (\gamma_{i+2,j-2}-2\gamma_{i+1,j-1}(t)+\gamma_{i,j}(t))\left(v\left(\frac{1}{Ny_{i+j}(t)}\right)-\bar v\right).
\end{align*}
Next, we need to estimate everything from below depending on the distance $y_{i+1}(t)-y_i(t)$.
The first term can be handled by using the mean value theorem in a similar manner as in \eqref{eq:firstterm}.
Further, we obtain
\begin{align*}
    \gamma_{i,0}(t)-\gamma_{i+1,0}(t)&=\int_0^{y_i(t)}\wt(y)dy-\int_0^{y_{i+1}(t)}\wt(y)dy=\int_{y_{i+1}(t)}^{y_i(t)}\wt(y)dy\\
    &=-\wt(\zeta_{i+1/2}(t))(y_{i+1}(t)-y_i(t))
\end{align*}
which allows to express the second term as desired.
Finally, we consider
\begin{align*}
    &\gamma_{i+2,j-1}(t)-2\gamma_{i+1,j-1}(t)+\gamma_{i,j}(t)\\
    &=\int_{x_{i+j}(t)}^{x_{i+j+1}(t)} \wt(y-x_{i+2}(t))-2\wt(y-x_{i+1}(t))+\wt(y-x_i(t))dy
    \intertext{and use once more the mean value theorem by denoting with $\zeta_i(y,t)$ the corresponding value in $(y-x_{i+1}(t),y-x_i(t))$}
    &=\int_{x_{i+j}(t)}^{x_{i+j+1}(t)} -y_{i+1}(t)\wt'(\zeta_{i+1}(y,t))+y_i(t)\wt'(\zeta_{i}(y,t))dy\\
    &=(y_{i+1}(t)-y_i(t))\int_{x_{i+j}(t)}^{x_{i+j+1}(t)} -\wt'(\zeta_{i+1}(y,t))dy
    + y_i(t)\int_{x_{i+j}(t)}^{x_{i+j+1}(t)}\wt'(\zeta_{i}(y,t)) -\wt'(\zeta_{i+1}(y,t))dy
    \intertext{and since $\zeta_i(y,t)>\zeta_{i+1}(y,t)$ and $\wt$ is concave, we get}
    &\leq (y_{i+1}(t)-y_i(t))\int_{x_{i+j}(t)}^{x_{i+j+1}(t)} -\wt'(\zeta_{i+1}(y,t))dy.
\end{align*}
We are able to derive 
\begin{align*}
    \frac{d}{dt}\left(y_{i+1}(t)-y_i(t)\right)\geq &\, C(t)\left(y_{i+1}(t)-y_i(t)\right),
\end{align*}
where $C(t)$ collects all the estimates and again using Grönwall's inequality 
\begin{align*}
    \left(y_{i+1}(t)-y_i(t)\right)\geq &\left(y_{i+1}(0)-y_i(0)\right)\exp(\int_0^t C(s)ds)\geq 0.
\end{align*}
This proves that the monotonicity is kept. Furthermore, this allows to prove the lower bound on $y_i(t)$.
Therefore, we consider again the derivative \eqref{eq:dygeneral} and add zero such that we obtain
\begin{align*}
  \nonumber   \frac{d}{dt}y_i(t)=&\sum_{j=1}^{N-1-i} (\gamma_{i+1,j-1}(t)-\gamma_{i,j}(t))\left(v\left(\frac{1}{Ny_{i+j}(t)}\right)-v\left(\frac{1}{Ny_{i}(t)}\right)\right)\\
      \nonumber &+\left(\sum_{j=1}^{N-1-i} \gamma_{i+1,j-1}(t)-\sum_{j=0}^{N-1-i}\gamma_{i,j}(t)\right)\left(v\left(\frac{1}{Ny_{i}(t)}\right)-\bar v\right).
      \intertext{We use the monotonicity and the definition of the weights to get}
    \geq &-\int_{x_i(t)}^{x_{i+1}(t)}\wt(x_N-y)dy\left(v\left(\frac{1}{Ny_{i}(t)}\right)-\bar v\right)= y_i(t)\wt(\zeta^N_i(t)) \left(\bar v-v\left(\frac{1}{Ny_{i}(t)}\right)\right),
\end{align*}
where $\zeta^N_i(t)\in[x_N(t)-x_{i+1}(t),x_N(t)-x_i(t)]$.
Keeping in mind that $y_i(t)\leq \bar L$ holds and using again Grönwall's inequality yields
\begin{align*}
    y_i(t)\geq y_i(0)\exp\left(\int_0^t \wt(\zeta^N_i(t)) \left(\bar v-v\left(\frac{1}{Ny_{i}(t)}\right)\right)ds \right)\geq y_i(0).
\end{align*}
The case of monotone decreasing distances can be proven analogously.
Here, most of the inequalities switch their sign and one proves first $y_i(t)\geq \bar L$, then that under this assumption the monotonicity is kept and finally the upper bound.
\end{proof}
}
\end{appendix}

\bibliographystyle{plain}
\bibliography{mysources}
\end{document}